\crefname{hypothesis}{Hypothesis}{Hypotheses}
\title{Exact Large Time Behavior of Spherically-Symmetric Plasmas\thanks{Submitted to the editors DATE.
\funding{The author was supported in part by NSF grants DMS-1614586 and DMS-1911145.}}}
\author{Stephen Pankavich\thanks{Department of Applied Mathematics and Statistics, Colorado School of Mines, Golden, CO 
  (\email{pankavic@mines.edu).}}}
\newcommand{\Omw}{\Om_w}
\newcommand{\Oml}{\Om_\ell}
\newcommand{\Om}{\Omega}
\newcommand{\winf}{w}
\newcommand{\linf}{\ell}
\newcommand{\chfn}{\mathbbm{1}}
\newcommand{\bfR}{\mathbb{R}}
\newcommand{\mcM}{\mathcal{M}}
\newcommand{\mcX}{\mathcal{X}}
\newcommand{\mcV}{\mathcal{V}}
\newcommand{\mcR}{\mathcal{R}}
\newcommand{\mcU}{\mathcal{U}}
\newcommand{\bS}{S}
\newcommand{\bbS}{\overline{S}}
\newcommand{\mfR}{\mathfrak{R}}
\newcommand{\mfW}{\mathfrak{W}}
\newcommand{\mfF}{\mathfrak{F}}
\newcommand{\mfV}{\mathfrak{V}}
\newcommand{\mcD}{\mathcal{D}}
\newcommand{\mcW}{\mathcal{W}}
\newcommand{\mcL}{\mathcal{L}}
\newcommand{\mcB}{\mathcal{B}}
\newcommand{\finf}{F_\infty}
\newcommand{\mcJ}{\mathcal{J}}
\newcommand{\mcEVP}{\mathcal{E}_\mathrm{VP}}
\newcommand{\mcERVP}{\mathcal{E}_\mathrm{RVP}}
\newcommand{\Tm}{T_0}
\newcommand{\Rm}{\mcR_0}
\newcommand{\Vm}{\mcV_0}
\begin{document}

\maketitle

\begin{abstract}
We consider the classical and relativistic Vlasov-Poisson systems with spherically-symmetric initial data and prove the optimal decay rates for all suitable $L^p$ norms of the charge density and electric field, as well as, the optimal growth rates for the largest particle position and momentum on the support of the distribution function. Though a previous work \cite{Horst} established upper bounds on the decay of the supremum of the charge density and electric field, we provide a slightly different proof, attain optimal rates, and extend this result to include all other norms. Additionally, we prove sharp lower bounds on each of the aforementioned quantities and establish the time-asymptotic behavior of all spatial and momentum characteristics.  Finally, we investigate the limiting behavior of the spatial average of the particle distribution as $t \to \infty$. In particular, we show that it converges uniformly to a smooth, compactly-supported function that preserves the mass, angular momentum, and energy of the system and depends only upon limiting particle momenta. 
\end{abstract}

\begin{keywords}
Kinetic Theory; Vlasov-Poisson; spherical symmetry; large time behavior
\end{keywords}

\begin{AMS}
35L60, 35Q83, 82C22, 82D10
\end{AMS}

\textbf{Dedication:} Dedicated to the memory of Bob Glassey, a friend and mentor.

\section{Introduction}
\label{intro}

The motion of a collisionless plasma with a single species of charge is given by the three-dimensional, relativistic Vlasov-Maxwell system:
\begin{equation}
\label{RVM}
\left \{ \begin{gathered}
\partial_{t}f+\hat{v} \cdot\nabla_{x}f+(E + \hat{v} \times B) \cdot\nabla_{v}f=0\\
\partial_{t} E=\nabla \times B- 4\pi j, \quad \nabla \cdot E=4\pi\rho,\\
\partial_{t} B=-\nabla \times E, \quad\nabla \cdot B=0
\end{gathered} \right .
\end{equation}
where
$$\rho(t,x)=\int_{\mathbb{R}^3} f(t,x,v)\,dv, \quad j(t,x)=\int_{\mathbb{R}^3} \hat{v} f(t,x,v)\,dv$$
and
$\hat{v} = \frac{v}{\sqrt{1 + \vert v \vert^2}}$.
Here, $t \geq 0$ represents time while $x,v \in \bfR^3$ are position and momentum, $\hat{v}$ is the relativistic velocity, $f(t,x,v)$ is the particle distribution function, $\rho(t,x)$ is the associated charge density, $E(t,x)$ and $B(t,x)$ are the self-consistent electric and magnetic fields generated by the charged particles, and we have chosen units such that the mass and charge of each particle are normalized.
In the classical limit (i.e., as the speed of light $c \to \infty$) this system reduces \cite{Degond, Jacklimit} to the Vlasov-Poisson system:
 \begin{equation}
\label{VPgeneral}
\left \{ \begin{aligned}
& \partial_{t}f+v\cdot\nabla_{x}f+E \cdot\nabla_{v}f=0\\
& \rho(t,x)=\int_{\mathbb{R}^3} f(t,x,v)\,dv\\
& E(t,x) = \int_{\mathbb{R}^3} \frac{x-y}{\vert x - y \vert^3} \rho(t,y) \ dy
\end{aligned} \right .
\end{equation}
with the initial condition $f(0,x,v) = f_0(x,v)$.

In the present paper, we consider the Cauchy problem for \eqref{VPgeneral} and require initial data $f_0 \in C^1_c(\mathbb{R}^6)$ that is spherically-symmetric and nonnegative (as in \cite{Horst}).
This symmetry assumption leads to a reduction in the system of PDEs that we will describe later as \eqref{VP}.
As an alternative to studying the classical system, one may instead assume that the initial data provided for \eqref{RVM} is spherically-symmetric, in which case Maxwell's equations decouple and the symmetry of the solution is preserved in time, as it is for \eqref{VP}. Under this assumption the electromagnetic model reduces to the relativistic Vlasov-Poisson system with spherical-symmetry \eqref{RVP}, which we will also state later in suitable coordinates.

It is known that given smooth (and not necessarily symmetric) initial data, the Vlasov-Poisson system \eqref{VPgeneral} possesses a smooth global-in-time solution \cite{LP, Pfaf, Schaeffer}. Similarly, global classical solutions have been constructed for the relativistic Vlasov-Poisson system for spherically-symmetric initial data \cite{GS, Horst1, Horst2}.
Global existence results for these systems often depend upon precise estimates for the growth of the characteristics associated to \eqref{VPgeneral} and its relativistic analogue, which are defined by
$$\left \{
\begin{aligned}
&\dot{\mcX}(s, t, x, v)=\mcV(s, t, x, v)\\
&\dot{\mcV}(s, t, x, v)= E(s, \mcX(s, t, x, v))
\end{aligned}
\right.
\quad \mathrm{and}
\quad \left \{
\begin{aligned}
&\dot{\mcX}(s, t, x, v)=\widehat{\mcV}(s, t, x, v)\\
&\dot{\mcV}(s, t, x, v)= E(s, \mcX(s, t, x, v)),
\end{aligned}
\right.$$
respectively,
each with initial conditions
$\mcX(t, t, x, v) = x$ and
$\mcV(t, t, x, v) = v.$
%
We refer to \cite{Glassey} and \cite{Rein} as general references for these, and other, well-known models in Kinetic Theory. 

In addition to understanding the growth of characteristics, we wish to identify the exact limiting behavior of all quantities in these systems, including the maximal support of $f$, charge density, electric field, and potential energy.
In general, the Cauchy problem for such systems does not possess smooth steady states (cf., \cite{GPS5}), and thus one expects the dispersive properties of the Vlasov equation to induce the field and charge density to tend to zero as $t \to \infty$. Hence, we wish to determine $a, b, c, d \geq 0$ such that
$$\begin{gathered}
\Vert E(t) \Vert_p = \mathcal{O}\left (t^{-a}\right ), \quad \Vert \rho(t) \Vert_q  = \mathcal{O}\left (t^{-b}\right ),\\ 
\sup_{x, v} | \mcV(t,0,x,v) | \sim \mathcal{O}\left (t^{c}\right ), \quad  \sup_{x,v} |\mcX(t,0,x,v)| \sim \mathcal{O}\left (t^{d}\right )
\end{gathered}$$
for suitable $p, q \in [1,\infty]$ and $t$ sufficiently large.
We also seek to determine the limiting behavior of the spatial average of the particle distribution.

Results regarding the large time behavior of solutions to the Cauchy problem for the Vlasov-Poisson and Vlasov-Maxwell systems exist in some special cases, including small data \cite{BD}, lower-dimensional settings \cite{BKR, GPS2, Sch}, and for neutral plasmas \cite{GPS, GPS4}.
More recently, some results concerning the intermediate asymptotic behavior for these systems were recently discovered in \cite{BCP1, BCP2}.
Additionally, H\"{o}rst \cite{Horst} proved that the $L^\infty$-norm of the charge density and electric field decay for \eqref{VP} and \eqref{RVP} assuming spherically-symmetric initial data. 
In addition to obtaining sharp upper bounds on the decay rates of these quantities, we will also provide lower bounds that display the optimality of theses rates and construct a limiting distribution as $t \to \infty$ that retains some of the information lost by $f$ in the time-asymptotic limit.

Due to the assumption of radially-symmetric initial data $f_0$, it is useful to consider new variables that completely describe solutions with such symmetry.  In particular, defining the spatial radius, radial momentum, and square of the angular momentum by
\begin{equation}
\label{ang}
r = \vert x \vert, \qquad w = \frac{x \cdot v}{r}, \qquad \ell = \vert x \times v \vert^2,
\end{equation}
the radial symmetry of $f_0$ implies that the distribution function, charge density, and electric field take special forms for all time. Namely, the particle distribution $f = f(t,r,w,\ell)$ satisfies a reduced Vlasov equation, while radial representations for the charge density, enclosed mass, and electric field also result.
Therefore, we arrive at the spherically-symmetric Vlasov-Poisson system
\begin{equation}
\tag{VP}
\label{VP}
\left \{ \begin{aligned}
&\partial_{t}f+w\partial_r f+\left ( \frac{\ell}{r^3} + \frac{m(t,r)}{r^2} \right ) \partial_w f=0\\
& \rho(t,r) = \frac{\pi}{r^2} \int_0^\infty \int_{-\infty}^{\infty} f(t,r,w,\ell) \ dw \ d\ell\\
& m(t,r) = 4\pi \int_0^r \tilde{r}^2 \rho(t,\tilde{r}) \ d\tilde{r}\\
& E(t,x) = \frac{m(t,r)}{r^2} \frac{x}{r}
\end{aligned} \right .
\end{equation}
and its relativistic counterpart, the spherically-symmetric, relativistic Vlasov-Poisson system
\begin{equation}
\tag{RVP}
\label{RVP}
\left \{ \begin{aligned}
& \partial_{t}f+\frac{w}{\sqrt{1 + w^2+ \ell r^{-2}}}\partial_r f+\left ( \frac{\ell r^{-3}}{\sqrt{1 + w^2+ \ell r^{-2}}} + \frac{m(t,r)}{r^2} \right ) \partial_w f=0\\
& \rho(t,r) = \frac{\pi}{r^2} \int_0^\infty \int_{-\infty}^{\infty} f(t,r,w,\ell) \ dw \ d\ell\\
& m(t,r) = 4\pi \int_0^r \tilde{r}^2 \rho(t,\tilde{r}) \ d\tilde{r}\\
& E(t,x) = \frac{m(t,r)}{r^2} \frac{x}{r}.
\end{aligned} \right .
\end{equation}
Notice that the only difference between \eqref{VP} and \eqref{RVP} occurs in the Vlasov equation. Additionally, for both systems $| E(t,x) | = m(t,r)r^{-2}$ for every $t \geq 0$, $x \in \mathbb{R}^3$; therefore, understanding the enclosed mass will be crucial to obtaining the exact time-asymptotic behavior of the field.
Whenever necessary, we will abuse notation so as to use both Cartesian and angular coordinates to refer to functions;  for instance, the particle distribution $f$ will be written both as $f(t,x,v)$ and $f(t,r,w,\ell)$ where appropriate.
Finally, the standard conserved quantities for these systems can be easily represented in the new coordinates. In particular, the total mass, which is time-independent, can be expressed as
\begin{eqnarray*}
\mcM & = & \iint f(t, x,v) \ dv dx = \iint f_0(x,v) \ dv dx = 4\pi^2 \int_0^\infty \int_{-\infty}^{\infty} \int_0^\infty f_0(r, w, \ell) \ d\ell dw dr
\end{eqnarray*}
so that $0 \leq m(t,r) \leq \mcM$ for all $t \geq 0$ and $r > 0$.
Another conserved quantity 
%
is the energy, namely
\begin{equation}
\label{EVP}
\mcEVP 
=2\pi^2\int_0^\infty \int_{-\infty}^{\infty} \int_0^\infty (w^2 + \ell r^{-2})  f_0(r,w,\ell) \ d\ell dw dr +  2\pi \int_0^\infty \frac{m(0,r)^2}{r^2} \ dr
\end{equation}
for \eqref{VP} and 
\begin{equation}
\label{ERVP}
\mcERVP 
=4\pi^2\int_0^\infty \int_{-\infty}^{\infty} \int_0^\infty \sqrt{1+w^2 + \ell r^{-2}}  f_0(r,w,\ell) \ d\ell dw dr +  2\pi \int_0^\infty \frac{m(0,r)^2}{r^2} \ dr 
\end{equation}
for \eqref{RVP}.
Note that the compact support of $f_0$ implies $|v|\leq C$ on the support of $f_0(x,v)$, and thus
$\sqrt{w^2 + \ell r^{-2}} \leq C$ on the support of $f_0(r,w,\ell)$.
Finally, due to the structure of the spherically-symmetric Vlasov equation, any function of the angular momentum is also conserved, so that
for any $\phi \in L^1_{\mathrm{loc}}(0,\infty)$, one finds
\begin{equation}
\label{consangmom}
\mcJ_\phi = \displaystyle \int_0^\infty \int_{-\infty}^{\infty} \int_0^\infty \phi (\ell)  f(t,r,w,\ell) \ d\ell dw dr = \int_0^\infty \int_{-\infty}^{\infty} \int_0^\infty \phi (\ell)  f_0(r,w,\ell) \ d\ell dw dr
\end{equation}
for both \eqref{VP} and \eqref{RVP}.

In the angular coordinates \eqref{ang}, the characteristics of the Vlasov equation, whose notation we will often abbreviate (e.g., $\mcR(s) = \mcR(s,\tau, r, w, \ell)$),  also assume a reduced form. 
In particular, for \eqref{VP} these are
\begin{equation}
\label{charang}
\left \{
\begin{aligned}
&\dot{\mcR}(s)=\mcW(s),\\
&\dot{\mcW}(s)= \frac{\mcL(s)}{\mcR(s)^3} + \frac{m(s, \mcR(s))}{\mcR(s)^2},\\
&\dot{\mcL}(s)= 0
\end{aligned}
\right.
\end{equation}
while for \eqref{RVP} they become
\begin{equation}
\label{charangrel}
\left \{
\begin{aligned}
&\dot{\mcR}(s)=\frac{\mcW(s)}{\sqrt{1 + \mcW(s)^2 + \mcL(s) \mcR(s)^{-2}}},\\
&\dot{\mcW}(s)= \frac{\mcL(s)}{\mcR(s)^3 \sqrt{1 + \mcW(s)^2 + \mcL(s) \mcR(s)^{-2}}} + \frac{m(s, \mcR(s))}{\mcR(s)^2},\\
&\dot{\mcL}(s)= 0
\end{aligned}
\right.
\end{equation}
both of which are augmented by initial conditions
\begin{equation}
\label{charanginit}
\mcR(\tau) = r, \qquad \mcW(\tau) = w, \qquad \mcL(\tau) = \ell.
\end{equation}
Note that $\mcL(s) = \ell$ for every $s \geq 0$ because the angular momentum of particles is conserved in time along characteristics.

In order to precisely state the main results, we first define notation for the support of $f$ and the maximal particle position and radial momentum on this set. For every $t \geq 0$, define
$$S(t) = \left \{ (r, w, \ell) : f(t, r, w, \ell) > 0 \right \},$$
and
$$\bbS(t) = \overline{S(t)},$$
as well as
$$\mfR(t) =\sup_{(r, w, \ell) \in \bS(0)} \mcR(t, 0, r, w, \ell),$$ 
and
$$\mfW(t) =\sup_{(r, w, \ell) \in \bS(0)} \left | \mcW(t, 0, r, w, \ell) \right |.$$ 
We will also make some use of projection operators, so define
$$\pi_r(\bS(t)) = \{r \geq 0 : (r,w,\ell) \in \bS(t)\}$$
for every $t \geq 0$, with analogous notation for $\pi_w$ and $\pi_\ell$.
In the current paper we prove that the optimal rate of decay is attained for all suitable $L^p$ norms of the electric field and charge density, as well as, the maximal position and radial momentum on the support of $f$, namely
\begin{theorem}
\label{T1}
Let $f_0 \in C_c^1(\mathbb{R}^6)$ be spherically-symmetric and not identically zero. Then, for any $p \in \left (\frac{3}{2},\infty \right ]$ and $q \in \left [1,\infty \right ]$ there are $C_1, C_2 > 0$ such that for $t \geq 0$ the solution of \eqref{VP} or \eqref{RVP} satisfies
$$
\begin{gathered}
C_1 (1+t)^{-2 + \frac{3}{p}} \leq \Vert E(t) \Vert_p \leq C_2 (1+t)^{-2 + \frac{3}{p}}, \\
\\
C_1 (1+t)^{-3 + \frac{3}{q}} \leq \Vert \rho(t) \Vert_q \leq C_2 (1+t)^{-3 + \frac{3}{q}}, \\
\\
C_1 \leq \mfW(t) \leq C_2,\\
\\
C_1(1+t) \leq \mfR(t) \leq C_2(1+t).
\end{gathered}
$$
\end{theorem}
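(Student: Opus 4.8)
The proof naturally decouples into an upper-bound half and a lower-bound half, and the heart of everything is controlling the enclosed mass $m(t,r)$, since $|E(t,x)| = m(t,r)r^{-2}$ exactly and $\rho$, $E$, $\mfR$, $\mfW$ are all downstream of it. I would organize the argument around the single scalar quantity $\mfR(t)$ (the maximal radius on the support), because once I know $\mfR(t) \sim 1+t$ the $L^p$ estimates follow by essentially geometric reasoning.

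Let me sketch my plan.

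\begin{enumerate}
\item \textbf{A priori velocity bound and the linear-growth ceiling.} First I would show $\mfW(t) \le C_2$ for all $t$. The radial-momentum equation $\dot{\mcW} = \ell \mcR^{-3} + m(s,\mcR)\mcR^{-2}$ has a nonnegative right-hand side whose size depends on how small $\mcR$ can get. Along a characteristic, the angular-momentum term is repulsive, so particles with $\ell > 0$ stay away from the origin; for $\ell = 0$ one argues that the enclosed mass $m \le \mcM$ contributes a force bounded by $\mcM/\mcR^2$, and the effective potential / energy conservation \eqref{EVP}--\eqref{ERVP} keeps $w^2 + \ell r^{-2}$ controlled along characteristics. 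The cleanest route is energy conservation: the kinetic energy of each particle plus the field energy is bounded by the initial energy, which forces $\mfW(t) \le C_2$. Combined with $\dot{\mcR} = \mcW$ (or its relativistic bounded analogue), this immediately gives $\mfR(t) \le \mfR(0) + C_2 t \le C_2(1+t)$, the upper bound on $\mfR$.

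\item \textbf{Upper bounds on $\rho$ and $E$ from spreading of support.} Since $f$ is constant along characteristics and $f_0$ is bounded, $\|f(t)\|_\infty = \|f_0\|_\infty$ and $\|f(t)\|_1 = \mcM$. The charge density is $\rho(t,r) = \pi r^{-2}\iint f\,dw\,d\ell$, supported in $r \le \mfR(t) \le C_2(1+t)$. The key extra input is a pointwise bound $\rho(t,r) \le C(1+t)^{-3}$: this comes from the fact that the $(w,\ell)$-cross-section of the support at fixed $r$ has bounded measure (from the velocity bound in Step 1) while the $r$-support has spread to size $\sim t$; a change of variables from $(r,w,\ell)$ at time $t$ back to time $0$, tracking the Jacobian of the characteristic flow, shows the density per unit volume decays like $t^{-3}$. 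Then $\|\rho(t)\|_q \le \|\rho(t)\|_\infty^{1-1/q}\|\rho(t)\|_1^{1/q} \le C(1+t)^{-3(1-1/q)} = C(1+t)^{-3+3/q}$. For the field, $|E(t,x)| = m(t,r)r^{-2} \le \mcM r^{-2}$ for $r \le C_2(1+t)$ and $E \equiv 0$ is false — rather, for $r \ge \mfR(t)$ we have $m(t,r) = \mcM$ so $|E| = \mcM r^{-2}$; integrating $|E|^p$ over $r \in (0,\infty)$ and using $m(t,r) \le C(1+t)^{-1}r^3 \cdot (\text{something})$ near small $r$... more carefully, one splits $\int_0^{\mfR(t)} + \int_{\mfR(t)}^\infty$, bounds $m \le \mcM$ and $m \le \frac{4\pi}{3}\|\rho\|_\infty r^3 \le C(1+t)^{-3}r^3$, and optimizes; the exponent $p > 3/2$ is exactly what makes $\int^\infty \mcM^p r^{-2p}\,r^2\,dr$ converge and what makes the split balance to $(1+t)^{-2+3/p}$.

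\item \textbf{Lower bounds.} This is the genuinely delicate part. I must show the support really does spread at a linear rate, i.e. $\mfR(t) \ge C_1(1+t)$, and that the mass does not all collapse to the origin. The idea: pick a particle (or a positive-measure set of particles) in $\bS(0)$ with $w > 0$ strictly and $\ell$ bounded away from both $0$ and $\infty$. Along its characteristic, $\dot{\mcR} = \mcW > 0$ as long as $\mcW$ stays positive; since the force $\ell \mcR^{-3} + m\mcR^{-2}$ is nonnegative, $\mcW$ is in fact non-decreasing once $\mcR$ is large, so $\mcR(t) \ge r + wt$ actually holds for all time along such characteristics — giving $\mfR(t) \ge C_1(1+t)$. (For the relativistic system, $\dot{\mcR}$ is the bounded relativistic velocity, still bounded below by a positive constant along these characteristics.) For the lower bound on $\mfW$: since $\mfW(t) \ge |\mcW(t)|$ for that same characteristic and $\mcW$ is non-decreasing, $\mfW(t) \ge w > 0 = C_1$. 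For the lower bounds on $\|E\|_p$ and $\|\rho\|_q$: here I use a conserved quantity that cannot decay. The total mass is conserved, so $\mcM = 4\pi\int_0^\infty r^2\rho(t,r)\,dr$ with support in $r \le C_2(1+t)$ forces $\|\rho(t)\|_1$-type lower bounds; more precisely, $\mcM = \int r^2\rho \le \|\rho\|_q' \cdot (\text{measure of support weighted by } r^2)$ by Hölder, and the support has radius $\le C_2(1+t)$, yielding $\|\rho(t)\|_q \ge C_1(1+t)^{-3+3/q}$. For the field: $m(t,r) \to \mcM$ as $r \to \infty$ and $m(t,r) = \mcM$ for $r \ge \mfR(t)$, so $\int_{\mfR(t)}^\infty |E|^p\,dx = 4\pi\mcM^p\int_{\mfR(t)}^\infty r^{-2p+2}\,dr = C\mcM^p \mfR(t)^{-2p+3} \ge C_1(1+t)^{-2p+3}$, i.e. $\|E(t)\|_p \ge C_1(1+t)^{-2+3/p}$ — and this is where $p > 3/2$ is needed for the integral to be finite and the bound nontrivial.
\end{enumerate}

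\textbf{The main obstacle.} The hardest step is the lower bound $\mfR(t) \ge C_1(1+t)$ together with the matching pointwise upper bound $\rho(t,r) \le C(1+t)^{-3}$; both require genuine control of the characteristic flow rather than soft conservation-law arguments. The danger is a scenario in which a large fraction of the mass lingers near the origin (where $m/r^2$ is large and the dynamics could, a priori, be complicated), which would spoil either the $t^{-3}$ density decay or the claim that enough mass escapes to infinity. Ruling this out requires showing that the radial characteristics are, after a bounded time, essentially "free-streaming" — that the force term is integrable in time along each characteristic — which follows from the $L^p$ decay of $E$ one is trying to prove, so the estimates must be bootstrapped together (likely via a continuity/bootstrap argument in $t$, or by exploiting monotonicity of $\mcW$ along the outgoing characteristics). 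The relativistic case needs only cosmetic changes since $|\hat{\mcW}| \le 1 \le |\mcW|$ makes all velocity bounds automatic, but one must be slightly careful that the lower bound on $\dot{\mcR}$ along escaping characteristics stays positive.
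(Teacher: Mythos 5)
Your overall architecture (upper bounds from spreading of the support, lower bounds from the conserved mass being trapped inside a ball of radius $\mfR(t)\le C(1+t)$) is the right one, and your lower-bound half is essentially sound. In fact your direct H\"older estimate $\mcM=\int\rho(t,x)\,dx\le \|\rho(t)\|_q\,\bigl(C\mfR(t)^3\bigr)^{1-1/q}$ gives the lower bound $\|\rho(t)\|_q\ge C(1+t)^{-3+3/q}$ for every $q$ more directly than the paper, which routes through Young's inequality $\|E(t)\|_2\le C\|\rho(t)\|_{6/5}$ and two interpolations; your tail computation $\int_{\mfR(t)}^\infty \mcM^p r^{2-2p}\,dr$ for the field and the split-and-optimize argument for the $L^p$ upper bound coincide with Lemmas~\ref{Ebelow} and~\ref{PotDecay}.

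The genuine gap is in your Step 1, and everything else rests on it. Energy conservation cannot give $\mfW(t)\le C$: the conserved quantity \eqref{EVP} bounds only the phase-space \emph{integral} of $w^2+\ell r^{-2}$ against $f$, not the supremum of $|w|$ on the support of $f$ --- an arbitrarily small amount of mass may be accelerated to arbitrarily large momentum without violating it. Nor is the per-particle quantity $w^2+\ell r^{-2}$ conserved along characteristics; by \eqref{V2} it changes at rate $2m(t,\mcR)\mcW\mcR^{-2}$, and controlling this requires $\int_0^\infty\|E(s)\|_\infty\,ds<\infty$, i.e.\ exactly the field decay you have not yet established (if the velocity bound followed from energy conservation, the hard part of global existence for \eqref{VPgeneral} would be trivial). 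The missing idea --- the central mechanism of the paper and of Horst's work --- is the angular momentum barrier: convexity of $t\mapsto\mcR(t)^2$ in \eqref{R2} yields $\mcR(t)^2\ge \ell r^{-2}t^2$, so the only particles that can lie inside radius $R$ at time $t$ are those with $\ell\le CR^2t^{-2}$, a set whose initial measure is $O(R^2t^{-2})$ because $f_0$ is bounded with compact support; hence $m(t,R)\le CR^2t^{-2}$ and $\|E(t)\|_\infty=\sup_r m(t,r)r^{-2}\le Ct^{-2}$ with no bootstrap at all. Only then does integrating the force along characteristics give $\mfW(t)\le C$ and $\mfR(t)\le C(1+t)$. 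A second, smaller gap: your route to $\|\rho(t)\|_\infty\le C(1+t)^{-3}$ (bounded $(w,\ell)$ cross-section times $r^{-2}$ with $r\sim t$) gives at best $t^{-2}$; the paper instead shows that the velocity support over each fixed $x$ shrinks like $t^{-1}\ln t$, which yields $(1+t)^{-3}\ln^3(1+t)$, and removing the logarithm (needed for the sharp $L^q$ rates after interpolation against $L^1$) requires a separate argument --- Schaeffer's improved theorem for \eqref{VP} and a Jacobian determinant estimate for backward characteristics for \eqref{RVP} --- that your sketch does not address.
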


In addition, we show that every momentum characteristic has a limit as $t \to \infty$ and use this to obtain the leading order behavior of characteristics that is equivalent to the asymptotic behavior of the repulsive $N$-body problem shown in \cite{ReinSIMA}.

\begin{theorem}
\label{T2}
Let $f_0 \in C_c^1(\mathbb{R}^6)$ be spherically-symmetric and consider solutions of \eqref{VP} or \eqref{RVP}. Let $\mcU$ be a compact subset of $[0,\infty)\times \bfR \times [0,\infty)$. Then, for any $(r,w,\ell) \in \mcU$ the limiting function $\mcW_\infty$ defined by
$$\mcW_\infty(r, w, \ell) :=  \lim_{t \to \infty} \mcW(t, 0, r, w, \ell)$$ 
exists and is bounded, nonnegative, and continuous.
Additionally, there is $C > 0$ such that for any $(r,w,\ell) \in \mcU$,
$$\left | \mcW(t, 0, r, w, \ell) - \mcW_\infty(r, w, \ell)  \right | \leq C(1+t)^{-1}.$$
For \eqref{VP}, we further find for any $(r,w,\ell) \in \mcU$
$$\mcR(t, 0, r, w, \ell) =  r + \mcW_\infty(r, w, \ell)t + \mathcal{O}\left (\ln \left (1+t \right) \right), $$
while for \eqref{RVP} we have
$$\mcR(t, 0, r, w, \ell) =  r + \frac{\mcW_\infty(r, w, \ell)}{\sqrt{1 + \mcW_\infty(r, w, \ell)^2}}t + \mathcal{O}\left (\ln \left (1+t\right) \right). $$

\end{theorem}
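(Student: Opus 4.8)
The plan is to exploit the fact that along characteristics the radial momentum $\mcW$ satisfies $\dot{\mcW}(s) = \mcL(s)\mcR(s)^{-3} + m(s,\mcR(s))\mcR(s)^{-2}$ (or its relativistic analogue), where both terms on the right-hand side are \emph{nonnegative}. Hence $s \mapsto \mcW(s,0,r,w,\ell)$ is nondecreasing. To get a limit I need an upper bound, and this is where Theorem \ref{T1} enters: since $\mfR(t) \sim (1+t)$, once $t$ is large enough the characteristic $\mcR(s,0,r,w,\ell)$ grows linearly in $s$, so that $\mcR(s)^{-2} \lesssim (1+s)^{-2}$ and $\mcL(s)\mcR(s)^{-3} \lesssim \ell (1+s)^{-3}$ are both integrable in $s$ on $[0,\infty)$; this requires first establishing a lower bound $\mcR(s,0,r,w,\ell) \geq c(1+s)$ uniformly for $(r,w,\ell)$ in the compact set $\mcU$ (using that $\mcW$ is nondecreasing and, on the bulk of $\mcU$, eventually bounded below away from zero — or handling the exceptional characteristics that stay near the origin by the angular-momentum barrier when $\ell > 0$, and by a separate argument when $\ell = 0$). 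Integrating $\dot{\mcW}$ from $t$ to $\infty$ then yields both the existence of $\mcW_\infty(r,w,\ell) = \mcW(t) + \int_t^\infty \dot{\mcW}(s)\,ds$ and the rate $|\mcW(t) - \mcW_\infty| \leq C\int_t^\infty (1+s)^{-2}\,ds \leq C(1+t)^{-1}$, once I know $m(s,\mcR(s)) \leq \mcM$ and the lower bound on $\mcR$.

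For continuity of $\mcW_\infty$ on $\mcU$: the characteristics depend continuously on initial data (standard ODE theory, using that $m$ is $C^1$ in $r$ away from $r=0$ and the known regularity of the field), so each $\mcW(t,0,\cdot)$ is continuous; the uniform rate $|\mcW(t) - \mcW_\infty| \leq C(1+t)^{-1}$ on $\mcU$ upgrades this to uniform convergence, so $\mcW_\infty$ is continuous as a uniform limit of continuous functions. Nonnegativity follows because $\mcW_\infty \geq \mcW(0) = w$ is not automatically nonnegative, so instead I argue that if $w < 0$ then $\mcR$ initially decreases but (by the repulsive force plus, when $\ell>0$, the centrifugal barrier) $\mcW$ must pass through zero at some finite time and thereafter is increasing; since $\mcW_\infty = \sup_s \mcW(s) \geq 0$ in that case, and $\mcW_\infty \geq w \geq 0$ when $w \geq 0$, nonnegativity holds in all cases. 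The case $\ell = 0$, $w \le 0$ (a particle aimed at the origin) needs the most care: one must show it cannot reach $r=0$, which uses the repulsive field generated by the enclosed mass — this is essentially the content of the global existence theory cited as \cite{GS, Horst1, Horst2} and can be invoked.

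For the asymptotic expansion of $\mcR$: write $\mcR(t) = r + \int_0^t \mcW(s)\,ds$ in the \eqref{VP} case. Split $\int_0^t \mcW(s)\,ds = \mcW_\infty t + \int_0^t (\mcW(s) - \mcW_\infty)\,ds$, and bound the remainder by $\int_0^t |\mcW(s) - \mcW_\infty|\,ds \leq C\int_0^t (1+s)^{-1}\,ds = C\ln(1+t)$, giving $\mcR(t) = r + \mcW_\infty t + \mathcal{O}(\ln(1+t))$. For \eqref{RVP}, $\dot{\mcR}(s) = \mcW(s)/\sqrt{1+\mcW(s)^2+\mcL(s)\mcR(s)^{-2}}$; since $\mcL(s)\mcR(s)^{-2} \to 0$ (by the linear lower bound on $\mcR$), the velocity converges to $\mcW_\infty/\sqrt{1+\mcW_\infty^2}$, and one must check the difference $\dot{\mcR}(s) - \mcW_\infty/\sqrt{1+\mcW_\infty^2}$ is $\mathcal{O}((1+s)^{-1})$ — this uses $|\mcW(s)-\mcW_\infty| \lesssim (1+s)^{-1}$ together with $\mcL \mcR^{-2} \lesssim (1+s)^{-2}$ and the Lipschitz dependence of $u \mapsto u/\sqrt{1+u^2}$ — and then integrate. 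I expect the main obstacle to be establishing the \emph{uniform-in-$\mcU$ linear lower bound} $\mcR(s,0,r,w,\ell) \geq c(1+s)$, particularly reconciling the behavior of characteristics with small or zero angular momentum and negative initial radial momentum, which must be excluded from reaching the origin before the integrability argument can close; everything downstream is then routine integration of the characteristic ODEs.
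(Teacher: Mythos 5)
There is a genuine gap, and it sits exactly where you flagged it. Your plan hinges on a uniform linear lower bound $\mcR(s,0,r,w,\ell)\geq c(1+s)$ for all $(r,w,\ell)\in\mcU$, but no such bound holds for a general compact $\mcU\subset[0,\infty)\times\bfR\times[0,\infty)$: a point with $\ell=0$, $w=0$ and $r$ in a region enclosing no mass gives a stationary characteristic, and for small $\ell>0$ the only available bound, $\mcR(t)^2\geq C\ell t^2$ from Lemma~\ref{L1}, degenerates as $\ell\to0$. The paper never proves (or needs) such a bound. For the gravitational/mass term it uses that $m(s,R)/R^2 = |E(s,\cdot)|\leq \Vert E(s)\Vert_\infty\leq C(1+s)^{-2}$ for \emph{every} radius $R$ (Lemma~\ref{L2}), so no lower bound on $\mcR$ is required at all; and Lemma~\ref{L2} itself only needs the $\ell$-dependent bound $\mcR^2\geq C\ell t^2$ inside a mass integral, where the $\ell$-degeneracy is harmless. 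The second, subtler problem is the centrifugal term: integrating $\ell\mcR(s)^{-3}\leq \ell(C\ell s^2)^{-3/2}=C\ell^{-1/2}s^{-3}$ gives a constant that blows up as $\ell\to0^+$, so your direct integration of $\dot{\mcW}$ cannot yield the rate $C(1+t)^{-1}$ with $C$ uniform over $\mcU$, nor continuity of $\mcW_\infty$ across $\ell=0$. The paper's device (Lemma~\ref{L6}) is to pass to the speed $\mcB=\sqrt{\mcW^2+\ell\mcR^{-2}}$, whose derivative is exactly $(m/\mcR^2)(\mcW/\mcB)$ — the centrifugal term cancels — so $\mcB$ converges at rate $(1+t)^{-1}$ uniformly; one then recovers $\mcW$ via $|\mcW-\mcB|=\ell\mcR^{-2}/(\mcW+\mcB)\leq\sqrt{\ell}\,\mcR^{-1}\leq Ct^{-1}$, where the powers of $\ell$ cancel against $\mcR\geq C\sqrt{\ell}\,t$. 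That cancellation is the idea your proposal is missing.

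Two smaller points. Your nonnegativity argument is incomplete for $\ell=0$, $w<0$: such a particle can reach the origin (in Cartesian terms, $x_0\parallel v_0$), so "$\mcW$ passes through zero and then increases" does not follow from a centrifugal barrier that is absent. The paper instead reads nonnegativity off the asymptotic formula: $\mcR(t)=r+\mcW_\infty t+\mathcal{O}(\ln(1+t))\geq 0$ forces $\mcW_\infty\geq 0$. Finally, your derivation of the $\mathcal{O}(\ln(1+t))$ correction for $\mcR$, including the relativistic case via the Lipschitz bound on $u\mapsto u/\sqrt{1+u^2}$ and the estimate $\ell\mcR^{-2}\lesssim s^{-2}$, matches Lemma~\ref{L8} and is fine once the uniform momentum rate is in hand.
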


Finally, we utilize these momentum limits to identify the limiting behavior of the spatial average of the particle distribution as $t \to \infty$. To prove this result, we will make an assumption on the set of initial angular momenta, namely that there is $C> 0$ such that 
\begin{equation} 
\label{Condition} 
\tag{A} 
\inf \pi_\ell(\bS(0)) = \inf \{ \ell \geq 0: (r,w,\ell) \in \bS(0)\} \geq C. 
\end{equation}

\begin{theorem}
\label{T3}
Let $f_0 \in C_c^1(\mathbb{R}^6)$ be spherically-symmetric and let $f(t,r,w,\ell)$ be the corresponding solution of \eqref{VP} or \eqref{RVP}. 
Define
$$\Omw = \left \{ \mcW_\infty(0, r, w, \ell) : (r, w, \ell) \in \bS(0) \right \},$$
$$\Oml = \left \{ \ell \in [0,\infty) : (r, w, \ell) \in \bS(0) \right \},$$
and
$\Om = \Omw \times \Oml$.
If $\bS(0)$ satisfies \eqref{Condition}, then there is $F_\infty \in C_c^1(\bfR \times [0,\infty))$ supported on $\overline{\Om}$ such that the spatial average
$$F(t,w, \ell) = \int_0^\infty f(t,r,w,\ell) \ dr$$
satisfies $F(t,w,\ell) \to F_\infty(w,\ell)$ uniformly in $(w,\ell)$ as $t \to \infty$.
%
In particular, we have
$$4\pi^2 \iint\limits_{\Om} F_\infty(\winf,\linf) \ d\linf d\winf = \mcM$$
and for every $\phi \in L^1_{\mathrm{loc}}(0,\infty)$ ,
$$\iint\limits_{\Om} \phi(\linf) F_\infty(\winf,\linf) \ d\linf d\winf = \mcJ_\phi.$$
Additionally, for \eqref{VP} the limiting distribution satisfies
$$2\pi^2 \iint\limits_{\Om} \winf^2 F_\infty(\winf, \linf) \ d\linf d\winf = \mcEVP,$$
while for \eqref{RVP}, it satisfies
$$4\pi^2 \iint\limits_{\Om} \sqrt{1 + \winf^2} F_\infty(\winf, \linf) \ d\linf d\winf = \mcERVP.$$
\end{theorem}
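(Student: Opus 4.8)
The plan is to transport $f$ along the reduced characteristics, pass to the limit using the momentum convergence of Theorem~\ref{T2}, and identify the limit of $F$ as a pushforward measure which, under~\eqref{Condition}, has a $C^1$ density. Since $\mcL(s)\equiv\ell$, for each fixed $\ell$ the map $(r_0,w_0)\mapsto(\mcR(t,0,r_0,w_0,\ell),\mcW(t,0,r_0,w_0,\ell))$ preserves Lebesgue measure in $(r,w)$, because the reduced vector field $(w,\,\ell r^{-3}+m(t,r)r^{-2})$ and its relativistic analogue are divergence-free; hence its Jacobian determinant equals $1$ and $f(t,r,w,\ell)=f_0(\mcR(0,t,r,w,\ell),\mcW(0,t,r,w,\ell),\ell)$. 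Integrating in $r$ and changing variables gives, for bounded continuous $\varphi$,
\[
\int_{\bfR}\varphi(w)\,F(t,w,\ell)\,dw=\iint\varphi(\mcW(t,0,r_0,w_0,\ell))\,f_0(r_0,w_0,\ell)\,dr_0\,dw_0 ,
\]
and since $f_0$ is compactly supported and $\mcW(t,0,\cdot)\to\mcW_\infty(\cdot)$ locally uniformly by Theorem~\ref{T2}, the right-hand side converges as $t\to\infty$; thus $F(t,\cdot,\ell)\,dw$ converges weakly to the pushforward $\nu_\ell$ of $f_0(\cdot,\cdot,\ell)\,dr_0\,dw_0$ under $(r_0,w_0)\mapsto\mcW_\infty(r_0,w_0,\ell)$. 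The constants $\mcM,\mcJ_\phi,\mcEVP,\mcERVP$ will emerge at the end by passing to the limit in the corresponding conservation laws.

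For the technical heart --- showing that under~\eqref{Condition} each $\nu_\ell$ is absolutely continuous with a $C^1$ density --- I would first establish a uniform lower bound on the characteristics. Since $\dot{\mcW}=\ell\mcR^{-3}+m(s,\mcR)\mcR^{-2}>0$ (and similarly for \eqref{RVP}), each $\mcW(s)$ is strictly increasing; and writing the single-particle energy $e(s)=\tfrac12\mcW(s)^2+\tfrac{\ell}{2\mcR(s)^2}+\int_{\mcR(s)}^\infty m(s,\sigma)\sigma^{-2}\,d\sigma$, one computes $\dot e(s)=\int_{\mcR(s)}^\infty\partial_s m(s,\sigma)\sigma^{-2}\,d\sigma$, while the local mass balance $\partial_s m(s,\sigma)=-4\pi^2\iint wf(s,\sigma,w,\ell)\,dw\,d\ell$ gives $|\partial_s m(s,\sigma)|\leq 4\pi\sigma^2\mfW(s)\rho(s,\sigma)$; with the bounds $\Vert\rho(s)\Vert_\infty\leq C_2(1+s)^{-3}$, $\mfW(s)\leq C_2$ and $\mfR(s)\leq C_2(1+s)$ of Theorem~\ref{T1} this yields $|\dot e(s)|\leq C(1+s)^{-2}$, so $e(s)$ stays bounded, uniformly on $\bS(0)$ and $s\geq0$ (here \eqref{Condition} also keeps $r_0$, and hence $e(0)$, bounded on $\bS(0)$). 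From $\tfrac{\ell}{2\mcR(s)^2}\leq e(s)$ and $\ell\geq C>0$ we get $\mcR(s,0,r_0,w_0,\ell)\geq\delta>0$ uniformly; then each $\mcW(s)$ eventually becomes positive (a trajectory with $\mcW\leq0$ always would have $\mcR$ nonincreasing, hence bounded above, forcing $\dot{\mcW}\geq C\mcR^{-3}$ to stay bounded below and $\mcW\to+\infty$), so $\mcW_\infty(r_0,w_0,\ell)>0$ and, by Theorem~\ref{T2}, $\mcR(s,0,r_0,w_0,\ell)\geq c(1+s)$, all uniformly on $\bS(0)$. Consequently the radial derivative of the force along the characteristic, $-3\ell\mcR^{-4}+4\pi\rho(s,\mcR)-2m(s,\mcR)\mcR^{-3}$, is $O((1+s)^{-3})$ by Theorem~\ref{T1}, hence integrable in $s$ and still integrable after multiplication by $s$. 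Feeding this into the trace-free variational system for the derivatives of $(\mcR,\mcW)$ in $r_0$ and $w_0$ (whose off-diagonal coefficient is precisely this quantity) shows that $\nabla_{(r_0,w_0)}\mcW(t,0,\cdot)$ converges, as $t\to\infty$, to $\nabla_{(r_0,w_0)}\mcW_\infty$ at rate $(1+t)^{-1}$; and since the flow Jacobian satisfies $\partial_{r_0}\mcR\,\partial_{w_0}\mcW-\partial_{w_0}\mcR\,\partial_{r_0}\mcW\equiv1$, the limit $\nabla_{(r_0,w_0)}\mcW_\infty$ cannot vanish anywhere on $\bS(0)$: were both components to vanish at some point, the $s$-weighted integrability would force the $\mcR$-derivatives to converge to finite limits and the left-hand side of this Wronskian identity to tend to $0\neq1$.

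With this non-degeneracy, the coarea formula turns the pushforward identity into
\[
F(t,w,\ell)=\int_{\{(r_0,w_0):\,\mcW(t,0,r_0,w_0,\ell)=w\}}\frac{f_0(r_0,w_0,\ell)}{|\nabla_{(r_0,w_0)}\mcW(t,0,r_0,w_0,\ell)|}\,d\mathcal{H}^1
\]
and, letting $t\to\infty$,
\[
F_\infty(w,\ell)=\int_{\{(r_0,w_0):\,\mcW_\infty(r_0,w_0,\ell)=w\}}\frac{f_0(r_0,w_0,\ell)}{|\nabla_{(r_0,w_0)}\mcW_\infty(r_0,w_0,\ell)|}\,d\mathcal{H}^1 ,
\]
which is $C^1$, compactly supported, and vanishes off $\overline{\Om}$ (its $w$-support lies in $(0,\infty)$ since $\mcW_\infty>0$). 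Because $\mcR\geq\delta>0$ on $\bS(0)$, a standard bootstrap --- the $r$-regularity of $m$ improving with that of $\rho$ --- makes $\mcW(t,0,\cdot)$ bounded in $C^2$ with $|\nabla\mcW(t,0,\cdot)|$ bounded below, uniformly in $t$; hence $\{F(t,\cdot,\cdot)\}_{t\geq0}$ is bounded in $C^1$ with supports in the fixed compact set $[-C_2,C_2]\times\pi_\ell(\bS(0))$, so by a standard compactness argument (equicontinuity and uniformly bounded supports) any sequence $t_n\to\infty$ has a subsequence along which $F(t_n,\cdot,\cdot)$ converges uniformly, necessarily to its weak limit $F_\infty$; thus $F(t,\cdot,\cdot)\to F_\infty$ uniformly in $(w,\ell)$. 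The relativistic case is identical with the obvious modifications to $\dot{\mcW}$, $e(s)$ and the variational system, using the \eqref{RVP} forms of Theorems~\ref{T1}--\ref{T2}.

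Finally, the moment identities come from passing to the limit in conserved quantities. For any $g$, measure-preservation gives $\iint g(w)F(t,w,\ell)\,dw=\iint g(\mcW(t,0,r_0,w_0,\ell))\,f_0(r_0,w_0,\ell)\,dr_0\,dw_0$, which tends to $\int g(w)F_\infty(w,\ell)\,dw$; taking $g\equiv1$, integrating in $\ell$, and using $4\pi^2\iint F(t,w,\ell)\,dw\,d\ell=\mcM$ for all $t$ yields $4\pi^2\iint_\Om F_\infty=\mcM$, and taking the conserved $\mcJ_\phi$ of~\eqref{consangmom} yields $\iint_\Om\phi(\ell)F_\infty=\mcJ_\phi$ for $\phi\in L^1_{\mathrm{loc}}(0,\infty)$. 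For the energy, write the conserved $\mcEVP$ as $2\pi^2\iiint(\mcW(t,0,r_0,w_0,\ell)^2+\ell\,\mcR(t,0,r_0,w_0,\ell)^{-2})f_0\,dr_0\,dw_0\,d\ell+2\pi\int m(t,r)^2r^{-2}\,dr$: the last term equals $\tfrac12\Vert E(t)\Vert_2^2\to0$ since $2>\tfrac32$, the first term with weight $\ell\,\mcR^{-2}$ tends to $0$ by dominated convergence since $\mcR\to\infty$ on the support, and the term with weight $\mcW^2$ tends to $\iiint\mcW_\infty^2 f_0=\iint_\Om w^2 F_\infty$; hence $\mcEVP=2\pi^2\iint_\Om w^2 F_\infty$, and the same argument with $\sqrt{1+\mcW(t,0,r_0,w_0,\ell)^2+\ell\,\mcR(t,0,r_0,w_0,\ell)^{-2}}\to\sqrt{1+\mcW_\infty^2}$ gives $\mcERVP=4\pi^2\iint_\Om\sqrt{1+w^2}\,F_\infty$. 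The main obstacle is the second step: obtaining the uniform lower bound $\mcR\geq\delta>0$ and, from it, the convergence and non-vanishing of $\nabla_{(r_0,w_0)}\mcW_\infty$ --- both the sharp decay of Theorem~\ref{T1} and the centrifugal barrier of~\eqref{Condition} are essential, since without~\eqref{Condition} a characteristic could reach the singular field at the origin, $\mcW_\infty$ could vanish, and $\nu_\ell$ could carry an atom at $w=0$.
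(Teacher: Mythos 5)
Your treatment of the weak limit and of the four conservation identities is essentially the paper's (Lemma \ref{L9}): transport back along the measure-preserving reduced flow, pass to the limit using $\mcW(t,0,\cdot)\to\mcW_\infty$, and test against $1$, $\phi(\ell)$, and $w^2$ (resp.\ $\sqrt{1+w^2}$), with the potential energy killed by the $L^2$ field decay. Where you genuinely diverge is in producing a density for the pushforward and in upgrading weak to uniform convergence. The paper changes variables $w\mapsto\mcW_\infty(T,r,w,\ell)$ at a fixed large time $T$, for each fixed $r$, using the one-dimensional monotonicity $\partial_w\mcW_\infty(T,\cdot)\ge\tfrac12$ of Lemma \ref{Winfinvert}; you instead work at time $0$ and use the coarea formula over level sets in the $(r_0,w_0)$-plane, with nondegeneracy of $\nabla_{(r_0,w_0)}\mcW_\infty$ extracted from the unit Wronskian of the trace-free variational system. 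That Wronskian argument is correct and is an attractive alternative (your parenthetical claim that the $\mcR$-derivatives would converge to finite limits is too strong --- with only the rate $(1+s)^{-1}$ for the $\mcW$-derivatives they may grow like $\ln s$ --- but the Wronskian is then $O(s^{-1}\ln s)\to 0\neq 1$, so the contradiction survives). The price is that your construction demands more regularity of the flow than the paper's.

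That price is where the gap sits. You replace the paper's Lemma \ref{FCconv} --- which proves $\|\partial_t F(t)\|_\infty$ is integrable by exploiting that $t\partial_r f+\partial_w f$ grows only logarithmically along characteristics, combined with the $O(\ln t)$ diameter of the $r$-support --- by ``$\{F(t)\}$ is bounded in $C^1$ with fixed compact support, hence Arzel\`a--Ascoli plus uniqueness of weak limits.'' The skeleton is sound, but the uniform $C^1$ bound is precisely the hard estimate and you do not establish it. The naive route fails: $\partial_w F=\int\partial_w f\,dr$ with $\|\partial_w f(t)\|_\infty\le C(1+t)$ (Lemma \ref{Df}) and an $r$-support of size $O(\ln t)$ only gives $O(t\ln t)$. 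So everything rests on your coarea representation, which requires (i) a $t$-uniform lower bound on $|\nabla_{(r_0,w_0)}\mcW(t,0,\cdot)|$ and $t$-uniform $C^2$ bounds on the forward flow --- these need decay of $\partial_r\rho$ and hence the shrinking momentum-support estimate inside Lemma \ref{Lrho}, none of which you carry out --- and, more seriously, (ii) equicontinuity in $\ell$, about which your fixed-$\ell$ coarea formula says nothing; without it Arzel\`a--Ascoli on $\bfR\times[0,\infty)$ does not apply and you obtain only convergence uniform in $w$ for each fixed $\ell$, not the claimed uniformity in $(w,\ell)$. These points are likely reparable by the same variational-equation machinery (the $\ell$-derivatives obey a similar system with integrable forcing $O(s^{-3})$), but as written the uniform-convergence step is asserted rather than proved, and it is the main content of the theorem beyond the weak limit.
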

We note that the optimal decay rates of the charge density and electric field in $L^\infty$ are the instrumental components in constructing $\finf$, and the symmetry assumption is merely used to obtain these rates.
Therefore, analogous tools could be used to prove such a theorem without the assumption of spherical symmetry.
However, the fastest rate of decay currently known \cite{Yang} for the electric field in \eqref{VPgeneral} is
$$\| E(t) \|_\infty \leq C(1+t)^{-1/6},$$
and this is not enough to prove the existence of limiting momenta.
Furthermore, a sufficiently fast rate of decay cannot be obtained in lower dimensions \cite{BKR}.

The proofs of Theorems~\ref{T1}-\ref{T3} are obtained by collecting the results of a variety of lemmas within subsequent sections.
In particular, Theorem \ref{T2} is immediately implied by the conclusions of Lemmas \ref{L6} and \ref{L8} (with $\tau = 0$), and the same is true of Theorem \ref{T3} vis-a-vis Lemmas \ref{L7}-\ref{FCconv}.
To focus on the content of the lemmas, we place the proof of Theorem \ref{T1} within the final section. 
For the remainder of the paper we will inherently assume $f_0 \in C_c^1(\mathbb{R}^6)$ is spherically-symmetric. Throughout, the constant $C > 0$ may change from line to line, but when it is necessary to denote a certain constant, we will distinguish this value with a subscript, e.g. $C_0$.

\section{Convexity Estimates on Characteristics}
\label{characteristics}
We first study the behavior of the systems \eqref{charang} and \eqref{charangrel} that define the characteristics of \eqref{VP} and \eqref{RVP}, respectively.
In particular, we show that all spatial characteristics with positive angular momentum grow like $\mathcal{O}(t)$. Additionally, the corresponding momentum characteristics must eventually assume only positive values for suitably large time.

\begin{lemma}
\label{L1}
Let $\mcU$ be a compact subset of $[0,\infty) \times \bfR \times [0,\infty)$ and let $(r, w, \ell) \in \mcU$ be given with $\ell > 0$. Assume $(\mcR(t), \mcW(t), \ell)$ satisfy \eqref{charang} or \eqref{charangrel} for all $t \geq 0$ and \eqref{charanginit} with $\tau = 0$.
Define
$$D(r, w, \ell) = \sqrt{\frac{\ell}{\ell + r^2w^2}}.$$
Then, we have the following: 
\begin{enumerate}

\item There is $C > 0$ such that
$$\mcR(t)^2 \geq C\ell t^2$$
for every $t \geq 0$.

\item There exists $T_1 = T_1(r, w, \ell) \geq 0$ such that
$$\mcW(t) = \dot{\mcR}(t) > 0$$
for all $t \in (T_1, \infty)$.
Furthermore, $T_1 = 0$ for $w \geq 0$ and for solutions of \eqref{charang}
$$ 0 < T_1 \leq \frac{-w r^3}{\ell}$$
for $w < 0$, while
for solutions of \eqref{charangrel}
$$ 0 < T_1 \leq \frac{-w r^3 \sqrt{1 + w^2 + \ell r^{-2}}}{\ell}$$
for $w < 0$.
\item The spatial characteristics attain a minimum at $T_1$ so that
$$\Rm := \min_{t \in [0,\infty)} \mcR(t, 0, r, w, \ell) = \mcR(T_1, 0, r, w, \ell),$$
and this quantity satisfies the bound
$$\Rm \geq
\begin{cases}
r & \mathrm{if} \ w \geq 0 \\
Dr & \mathrm{if} \ w < 0.
\end{cases}$$

\end{enumerate}
\end{lemma}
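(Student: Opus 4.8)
The whole lemma is driven by one observation: along \eqref{charang} (and, with an extra positive factor on the centrifugal term, along \eqref{charangrel}) the radial momentum is strictly increasing, since
$$\dot{\mcW}(s) = \frac{\ell}{\mcR(s)^3} + \frac{m(s,\mcR(s))}{\mcR(s)^2} > 0$$
because $\ell > 0$, $\mcR(s) > 0$, and $m \ge 0$. To book-keep the argument I would introduce a monotone ``radial energy'' along characteristics: $e(s) := \tfrac12\mcW(s)^2 + \tfrac{\ell}{2}\mcR(s)^{-2}$ for \eqref{charang}, and $\gamma(s) := \sqrt{1 + \mcW(s)^2 + \ell\mcR(s)^{-2}}$ for \eqref{charangrel}. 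A one-line differentiation using the characteristic system shows $\dot{e}(s) = m(s,\mcR(s))\,\mcW(s)/\mcR(s)^{2}$ and $\dot{\gamma}(s) = m(s,\mcR(s))\,\mcW(s)/(\gamma(s)\mcR(s)^{2})$, so in both cases the radial energy has the same sign of monotonicity as $\mcW$. Because $\mcW$ is strictly increasing, there is $T_1 \ge 0$ with $\mcW < 0$ on $[0,T_1)$ and $\mcW > 0$ on $(T_1,\infty)$, with $T_1 = 0$ exactly when $w \ge 0$ (finiteness of $T_1$ will follow from (2)); moreover $\dot{\mcR}$ has the sign of $\mcW$, so $\mcR$ decreases on $[0,T_1)$ and increases on $(T_1,\infty)$, attaining its global minimum at $T_1$. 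This is the qualitative skeleton of (2) and (3); it remains to prove $T_1 < \infty$ with the stated bounds and to bound $\Rm$ from below.

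Next I would establish the quantitative estimates of (2) and (3). Suppose $w < 0$. While $\mcW \le 0$ the radius is nonincreasing, so $\mcR \le r$; for \eqref{charang} this gives $\dot{\mcW} \ge \ell r^{-3}$, and integrating from $0$ forces $\mcW$ to vanish no later than $-wr^3/\ell$, so $T_1 \le -wr^3/\ell < \infty$. For \eqref{charangrel}, the sign computation above shows $\gamma$ is nonincreasing on $[0,T_1]$, hence $\gamma \le \gamma(0) = \sqrt{1 + w^2 + \ell r^{-2}}$ there; thus $\dot{\mcW} \ge \ell r^{-3}/\gamma(0)$ and $T_1 \le -wr^3\sqrt{1+w^2+\ell r^{-2}}/\ell$. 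For the minimum: $e$ (resp.\ $\gamma$) being nonincreasing on $[0,T_1]$ and $\mcW(T_1)=0$ give $e(T_1) \le e(0)$ (resp.\ $\gamma(T_1) \le \gamma(0)$), which in either case reads $\ell\,\Rm^{-2} \le w^2 + \ell r^{-2}$, i.e.\ $\Rm \ge r\sqrt{\ell/(\ell + r^2 w^2)} = Dr$; and $\Rm = r$ when $w \ge 0$ since then $T_1 = 0$. As a byproduct $\mcR$ stays bounded below by a positive constant, so the characteristics are globally defined and the manipulations above are justified.

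For part (1), I would exploit that $e$ (resp.\ $\gamma$) is nondecreasing on $[T_1,\infty)$: together with $\mcW(T_1) = 0$ and $\mcR \ge \Rm$ this yields $\mcW(s)^2 \ge \ell\big(\Rm^{-2} - \mcR(s)^{-2}\big) \ge 0$ for $s \ge T_1$. For \eqref{charang}, $\dot{\mcR} = \mcW \ge \sqrt{\ell(\Rm^{-2} - \mcR^{-2})}$; setting $u = \mcR^2$ this becomes $\frac{d}{ds}\sqrt{u - \Rm^2} \ge \sqrt{\ell}\,\Rm^{-1}$ on $(T_1,\infty)$, which integrates to $\mcR(s)^2 \ge \Rm^2 + \ell\,\Rm^{-2}(s - T_1)^2$. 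For \eqref{charangrel}, I would first bound the Lorentz factor from above: on $[T_1,\infty)$, $\dot{\gamma} \le \mcM\,\mcR^{-2}\dot{\mcR} = -\mcM\frac{d}{ds}\mcR^{-1}$, so with $\mcR \ge \Rm$ one gets $\gamma(s) \le \gamma(T_1) + \mcM\,\Rm^{-1} =: \Gamma$, a data-dependent constant; then $\dot{\mcR} = \mcW/\gamma \ge \Gamma^{-1}\sqrt{\ell(\Rm^{-2} - \mcR^{-2})}$ and the same computation gives $\mcR(s)^2 \ge \Rm^2 + \ell\,\Gamma^{-2}\Rm^{-2}(s-T_1)^2$. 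In both cases, combining this lower bound for $s \ge T_1$ with the trivial bound $\mcR(t)^2 \ge \Rm^2$ for $t \le T_1$ and the finite bound on $T_1$ from (2) produces $\mcR(t)^2 \ge C\ell t^2$ for all $t \ge 0$, with $C > 0$ depending only on $f_0$ and $(r,w,\ell)$ (and uniform over compact subsets of $\mcU$ on which $\ell$ is bounded below).

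\textbf{Main obstacle.} Parts (2) and (3) are short once one observes the monotonicity of $\mcW$ and of the radial energy; the substance is part (1). The naive convexity inequality $\frac{d^2}{ds^2}\mcR^2 \ge 2\ell\,\mcR^{-2}$ does not integrate to the sharp quadratic-in-$t$ growth, so one instead has to pull a uniform-in-time \emph{positive} lower bound for $\mcW$ out of the energy monotonicity and feed it into a separable differential inequality for $\mcR$, handling carefully the turning point $s = T_1$ where the right-hand side vanishes. The additional relativistic difficulty is the upper bound on the Lorentz factor $\gamma$, for which the nonnegativity of $m$ and the global bound $m \le \mcM$ are exactly what is needed.
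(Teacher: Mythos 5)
Parts (2) and (3) of your proposal are correct and follow essentially the same route as the paper: monotonicity of $\mcW$ from $\dot{\mcW}\geq \ell\mcR^{-3}>0$, the identity $\frac{d}{dt}(\mcW^2+\ell\mcR^{-2})=2m\mcW\mcR^{-2}$ (resp.\ its relativistic analogue for $\gamma$), integration of $\dot{\mcW}\geq \ell r^{-3}$ (resp.\ $\ell r^{-3}\gamma(0)^{-1}$) while $\mcR\leq r$ to bound $T_1$, and the decrease of the radial energy on $[0,T_1]$ to get $\ell\Rm^{-2}\leq w^2+\ell r^{-2}$, i.e.\ $\Rm\geq Dr$. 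Your separable-ODE derivation of $\mcR(s)^2\geq \Rm^2+\ell\Rm^{-2}(s-T_1)^2$ for $s\geq T_1$ is also sound and arrives at the same quadratic bound the paper obtains from convexity of $\mcR^2$.

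The gap is in the final assembly of part (1) when $w<0$. Patching the regime $t\lesssim T_1$ with the "trivial bound $\mcR(t)^2\geq\Rm^2$" forces $C\leq \Rm^2/(\ell T_1^2)$, and with your only available estimates $\Rm^2\geq \ell r^2/(\ell+r^2w^2)$ and $T_1\leq -wr^3/\ell$ this gives $C\lesssim \ell^2/\bigl(w^2r^4(\ell+r^2w^2)\bigr)\to 0$ as $\ell\to 0$. You flag this by restricting to compact sets "on which $\ell$ is bounded below," but that is not good enough for how the lemma is used: in Lemma \ref{L2} the bound $\mcR(t,0,\tilde r,\tilde w,\tilde\ell)^2\geq C\tilde\ell t^2$ must hold with a \emph{single} $C$ for all $(\tilde r,\tilde w,\tilde\ell)\in S(0)$ with $\tilde\ell>0$ (condition \eqref{Condition} is not assumed there), since it is what yields the inclusion $\{\mcR^2\leq R^2\}\subseteq\{\tilde\ell\leq CR^2t^{-2}\}$ and hence $m(t,R)\leq CR^2t^{-2}$. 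The paper gets the uniform constant $C=r^{-2}\geq(\sup_\mcU r)^{-2}$ by two extra observations you omit: the quadratic bound $\mcR(t)^2\geq\Rm^2+\ell\Rm^{-2}(t-T_1)^2$ holds for \emph{all} $t\geq 0$ (integrate the convexity inequality, or your backward version of the energy argument, on $[0,T_1]$ as well), so that evaluating at $t=0$ gives $r^2\geq\Rm^2+\ell\Rm^{-2}T_1^2$; then minimizing $\bigl(\Rm^2+\ell\Rm^{-2}(t-T_1)^2\bigr)/t^2$ over $t>0$ gives exactly $\ell/(\Rm^2+\ell\Rm^{-2}T_1^2)\geq\ell r^{-2}$, with no degeneration as $\ell\to0$. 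The same issue afflicts your relativistic constant $\ell\Gamma^{-2}\Rm^{-2}$ even more severely, since $\Gamma=\gamma(T_1)+\mcM\Rm^{-1}$ blows up as $\ell\to0$; the paper instead carries the monotone function $g(x)=x/(1+x)$ through the convexity argument to obtain the uniform constant $r^{-2}/(1+w^2+\ell r^{-2})$.
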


\begin{proof}
%
Many of the calculations here arise from \cite{BCP1, BCP2, Horst}. Hence, we will provide the full proof for \eqref{charang}, but merely sketch its extension to \eqref{charangrel}.
First, consider solutions of \eqref{charang} and note the convexity of the spatial characteristics.  In particular, we find
\begin{equation}
\label{R2}
\begin{aligned}
\frac{d^2}{dt^2} \left ( \mcR(t)^2 \right ) & = 2 \left (\mcW(t)^2 + \ell \mcR(t)^{-2} \right ) + 2m(t, \mcR(t))\mcR(t)^{-1}\\
& \geq 2 \left (\mcW(t)^2 + \ell \mcR(t)^{-2} \right ).
\end{aligned}
\end{equation}
Similarly, the momentum characteristics satisfy 
\begin{equation}
\label{Winc}
\dot{\mcW}(t) \geq \ell \mcR(t)^{-3} > 0,
\end{equation}
and thus $\mcW(t)$ is increasing.
Finally, the square of the particle speed satisfies
\begin{equation}
\label{V2}
\frac{d}{dt} \left ( \mcW(t)^2 + \ell \mcR(t)^{-2} \right ) = 2\frac{m(t, \mcR(t))}{\mcR(t)^2} \mcW(t).
\end{equation}

We will focus on establishing the lower bound
\begin{equation}
\label{Rlb}
\mcR(t)^2 \geq \ell r^{-2} t^2
\end{equation}
for $t \geq 0$.
Consider $w \geq 0$. Then, by \eqref{Winc} it follows that $\mcW(t) > w \geq 0$ for all $t \geq 0$.
Therefore, \eqref{V2} and the nonnegativity of the enclosed mass implies  
$$\frac{d}{dt} \left ( \mcW(t)^2 + \ell \mcR(t)^{-2} \right ) \geq 0$$
for all $t \geq 0$, and because this function is increasing \eqref{R2} yields
$$\frac{d^2}{dt^2} \left ( \mcR(t)^2 \right ) \geq 2(w^2 + \ell r^{-2}).$$
Integrating in $t$ twice then implies
$$\mcR(t)^2 \geq r^2 + 2rwt + \left (w^2 + \ell r^{-2} \right )t^2 = (r+wt)^2 +  \ell r^{-2} t^2 \geq \ell r^{-2} t^2$$
which provides the stated lower bound \eqref{Rlb}.
We further find $\mcR(t) \geq r$ for all $t \in [0,\infty)$, and thus define the time $T_1$ at which the spatial minimum occurs to be identically zero.

Now, instead consider $w < 0$. Then, define
$$\Tm = \sup \{t \geq 0 : \mcW(t) \leq 0 \}$$
and note that $w < 0$ implies $\Tm > 0$.
We first show that $\Tm < \infty$. For the sake of contradiction, assume $\Tm =\infty$. Then, we have $\dot{\mcR}(t) = \mcW(t) \leq 0$ for all $t \geq 0$ and thus $\mcR(t) \leq r$ for all $t \geq 0$.  From \eqref{charang} and the nonnegativity of the enclosed mass, we find
$$ \ddot{\mcR}(t)  = \frac{\ell}{R(t)^3} + \frac{m(t, \mcR(t))}{\mcR(t)^2}\geq \ell \mcR(t)^{-3} \geq \ell r^{-3},$$
and upon integrating this yields 
$$\mcW(t) \geq \ell r^{-3}t + w$$
for all $t \geq 0$.
Taking $t > \frac{-w r^3}{\ell}$ implies $\mcW(t) > 0$, thus contradicting the assumption that $\Tm = \infty$, and we conclude that $\Tm$ must be finite. Of course, upon deducing $T_0 < \infty$ this same argument holds on the interval $[0,T_0]$ and further implies the upper bound
$$T_0 \leq \frac{-w r^3}{\ell}.$$

Since $\Tm < \infty$ and $\dot{\mcW}(t) > 0$ for $t \geq 0$, we find 
$$
\left \{
\begin{gathered}
\mcW(t)  < 0 \ \mathrm{for} \ t \in [0,\Tm),\\
\mcW(\Tm)  =0, \ \mathrm{and}\\
\mcW(t)  > 0 \ \mathrm{for} \ t \in (\Tm,\infty). 
\end{gathered}
\right.$$
Additionally, \eqref{V2} shows that
$$\frac{d}{dt} \left ( \mcW(t)^2 + \ell \mcR(t)^{-2} \right ) \biggr \vert_{t = \Tm} = 2\frac{m(\Tm, \mcR(\Tm))}{\mcR(\Tm)^2} \mcW(\Tm) = 0$$
and implies that both $\mcR(t)^2$ and $\mcW(t)^2 + \ell \mcR(t)^{-2}$ are minimized at $\Tm$, as the derivative of each of these quantities changes from negative to positive at $t = \Tm$.
Thus, we define
$$\Rm^2 := \min_{t \geq 0} \mcR(t)^2 = \mcR(\Tm)^2$$
and
$$\Vm^2 := \min_{t \geq 0} \left ( \mcW(t)^2 + \ell \mcR(t)^{-2} \right )= \ell \mcR(\Tm)^{-2}.$$
The identity
\begin{equation}
\label{lrep}
\ell = \Rm^2 \Vm^2
\end{equation}
then follows immediately.

From \eqref{R2} we find
$$\frac{d^2}{dt^2} ( \mcR(t)^2 ) \geq 2  \left ( \mcW(t)^2 + \ell \mcR(t)^{-2} \right ) \geq 2\Vm^2$$
for all $t \geq 0$.
Integrating twice in time yields
\begin{eqnarray*}
\mcR(t)^2 & \geq & \mcR(\Tm)^2 + 2\mcR(\Tm)\mcW(\Tm) (t - \Tm) + \Vm^2 (t - \Tm)^2\\
& = & \Rm^2 + \Vm^2 (t - \Tm)^2
\end{eqnarray*}
for any $t \geq 0$. In particular, evaluating this expression at $t = 0$ gives
\begin{equation}
\label{rgeq}
r^2 \geq \Rm^2 + \Vm^2 \Tm^2.
\end{equation}
Returning to the original lower bound for $\mcR(t)^2$, we divide by $t^2$ to find
$$\frac{\mcR(t)^2}{t^2} \geq \frac{\Rm^2 + \Vm^2 (t - \Tm)^2}{t^2}.$$
The right side of this inequality can then be minimized over all $t > 0$, and we find
$$\frac{\Rm^2 + \Vm^2 (t - \Tm)^2}{t^2} \geq \frac{\Rm^2 \Vm^2}{\Rm^2 + \Vm^2 \Tm^2},$$
which yields the subsequent lower bound
\begin{equation}
\label{R2lower}
\mcR(t)^2 \geq \frac{\Rm^2 \Vm^2}{\Rm^2 + \Vm^2 \Tm^2} t^2.
\end{equation}
Now, using \eqref{lrep} and \eqref{rgeq} in \eqref{R2lower}, we find
$$\mcR(t)^2 \geq \frac{\ell}{\Rm^2 + \Vm^2 \Tm^2} t^2 \geq \ell r^{-2}t^2$$
and the desired lower bound \eqref{Rlb} is again achieved, which establishes this inequality for all $w$.
With this, the first result merely follows from the bound on $r$ as using
$r \leq C$
within \eqref{Rlb} yields
$$\mcR(t)^2 \geq C\ell t^2.$$

Turning to the stated bounds on $\Rm$ and $T_1$ for $w < 0$, we note that because $\mcW(t) \leq 0$ for $t \in [0,T_0]$, equation \eqref{V2} implies
$$ \mcW(t)^2 + \ell \mcR(t)^{-2} \leq w^2 + \ell r^{-2} $$
for all $t \in [0,T_0]$. 
Evaluating this expression at $t = T_0$ yields
$$\ell \Rm^{-2} \leq w^2 + \ell r^{-2}$$
and rearranging, we find
$$\Rm \geq r \sqrt{\frac{\ell}{\ell + r^2 w^2}} = D r.$$
The remaining results for solutions of \eqref{charang} then follow by defining $T_1 = 0$ if $w \geq 0$ and $T_1 = \Tm$ if $w < 0$.  
 
The proof for characteristics of \eqref{RVP} uses analogous tools, and we merely sketch it. 
As before, the convexity of the spatial characteristics is crucial, and we find
\begin{equation*}
\frac{1}{2}\frac{d^2}{dt^2} \left ( \mcR(t)^2 \right ) 
\geq\frac{\mcW(t)^2 + \ell \mcR(t)^{-2}}{1 + \mcW(t)^2 + \ell \mcR(t)^{-2}} >0.
\end{equation*}
Similar to \eqref{V2}, the particle rest velocity satisfies
\begin{equation}
\label{V2rel}
\frac{d}{dt} \sqrt{1 + \mcW(t)^2 + \ell\mcR(t)^{-2}} = m(t,\mcR(t)) \mcR(t)^{-2} \frac{\mcW(t)}{\sqrt{1  + \mcW(t)^2 + \ell\mcR(t)^{-2}}}.
\end{equation}
Using this, it is straightforward to establish the relativistic analogue of \eqref{Rlb}, namely
\begin{equation}
\label{Rlbrel}
\mcR(t)^2 \geq \frac{\ell r^{-2}}{1 + w^2 + \ell r^{-2}} t^2
\end{equation}
for $t \geq 0$.
The only new ingredient is the increasing nature of the function $g(x) = \frac{x}{1+x}$, which ensures
$$ g\left (\mcW(t)^2 + \ell\mcR(t)^{-2} \right ) \geq g(w^2 + \ell r^{-2})$$
when $w \geq 0$
and the opposite inequality when $w < 0$. 
With \eqref{Rlbrel} estabilshed, the first estimate follows
as $(r,w,\ell) \in \mcU$ implies
$$\frac{r^{-2}}{1 + w^2 + \ell r^{-2}} = \frac{1}{r^2(1+w^2) + \ell} \geq C.$$
and the remaining bounds follow as before. We omit the details for brevity.
\end{proof}


\section{Field Estimates}

Next, we prove the decay of the field in $L^\infty$. The unifying result here is the lower bound
$$\mcR(t,0, r, w, \ell)^2 \geq C\ell t^2$$
for $(r,w,\ell) \in S(0)$ and $\ell > 0$, which is achieved for either system due to the compact support of $f_0$ and Lemma \ref{L1}.

\begin{lemma}
\label{L2}
For any $f_0 \in C^1_c(\mathbb{R}^6)$, there is $C > 0$ such that
$$\Vert E(t) \Vert_\infty \leq C(1+t)^{-2}$$
for any $t \geq 0$.
%
\end{lemma}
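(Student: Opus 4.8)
The plan is to reduce the estimate to a pointwise bound on the enclosed mass and to exploit the fact, furnished by Lemma~\ref{L1}, that particles carrying non‑negligible angular momentum are swept away from the origin at a linear rate, so that only a vanishingly small fraction of the mass can sit in any small ball at large times. Since $|E(t,x)| = m(t,r)r^{-2}$ for both systems, it suffices to prove $m(t,r) \leq C\, r^2 (1+t)^{-2}$ for $t$ large, together with the elementary bound $\sup_{0\le t\le 1}\Vert E(t)\Vert_\infty<\infty$. The first ingredient I would record is the representation of the enclosed mass as a transported quantity. The reduced characteristic vector field $\bigl(w,\ \ell r^{-3}+m(t,r)r^{-2},\ 0\bigr)$ on $(r,w,\ell)$‑space is divergence‑free, so the flow $(r_0,w_0,\ell_0)\mapsto\bigl(\mcR(t,0,r_0,w_0,\ell_0),\mcW(t,0,r_0,w_0,\ell_0),\ell_0\bigr)$ preserves $dr\,dw\,d\ell$ and $f(t,\cdot)$ is its push‑forward of $f_0$; hence
\begin{equation*}
m(t,r) = 4\pi^2 \int f_0(r_0,w_0,\ell_0)\, \chfn_{\{\mcR(t,0,r_0,w_0,\ell_0)\leq r\}}\, d\ell_0\, dw_0\, dr_0 ,
\end{equation*}
the slice $\{\ell_0=0\}$ being Lebesgue‑null and therefore harmless.

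Next I would carry out the angular‑momentum splitting. Fix $t\ge 1$ and $r>0$, let $C_\ast>0$ be the constant obtained by applying Lemma~\ref{L1}(1) on the compact set $\mcU=\bbS(0)$, and set $L_0 = \sup\pi_\ell(\bbS(0))<\infty$. Choose the threshold $\ell_\ast = \min\{L_0,\ C_\ast^{-1} r^2 t^{-2}\}$. If $\ell_0 > \ell_\ast$ (so in particular $\ell_0>0$), Lemma~\ref{L1}(1) gives $\mcR(t,0,r_0,w_0,\ell_0)^2 \geq C_\ast \ell_0 t^2 > C_\ast \ell_\ast t^2$; when $\ell_\ast = C_\ast^{-1}r^2 t^{-2}$ this exceeds $r^2$, so such initial data do not contribute to the integral above, and using $\bbS(0)\subseteq \pi_r(\bbS(0))\times\pi_w(\bbS(0))\times\pi_\ell(\bbS(0))$ we obtain
\begin{equation*}
m(t,r) \leq 4\pi^2 \Vert f_0\Vert_\infty\, \bigl| \pi_r(\bbS(0)) \bigr|\, \bigl| \pi_w(\bbS(0)) \bigr|\, \ell_\ast \leq C\, r^2 t^{-2}.
\end{equation*}
If instead $\ell_\ast = L_0$, then $r^2 \geq C_\ast L_0 t^2$, and the trivial bound $m(t,r)\le\mcM$ already yields $m(t,r)r^{-2} \leq \mcM (C_\ast L_0)^{-1} t^{-2}$. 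In either case $|E(t,x)| = m(t,r)r^{-2} \leq C\, t^{-2}$ for all $t\ge 1$.

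Finally, for $t\in[0,1]$ I would invoke the elementary estimate $m(t,r)r^{-2}\le \min\{\mcM r^{-2},\ \tfrac{4\pi}{3}\Vert\rho(t)\Vert_\infty\, r\}\le C\,\mcM^{1/3}\Vert\rho(t)\Vert_\infty^{2/3}$, together with the fact that $\Vert\rho(t)\Vert_\infty$ is bounded on $[0,1]$, which is part of the global existence theory for \eqref{VP} and \eqref{RVP}. Combining the two time regimes and replacing $t^{-2}$ by $(1+t)^{-2}$ at the cost of an absolute constant completes the proof. The only genuinely delicate point is the bookkeeping in the splitting step — verifying that the $f_0$‑mass carried by initial data with $\ell_0\le\ell_\ast$ is $\mathcal{O}(\ell_\ast)$, which relies on the compact support of $f_0$ to control $|\pi_r(\bbS(0))|$ and $|\pi_w(\bbS(0))|$, and the justification that the reduced flow preserves $dr\,dw\,d\ell$; the remaining computations are routine.
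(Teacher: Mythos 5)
Your proposal is correct and follows essentially the same route as the paper: represent the enclosed mass as a push-forward under the measure-preserving reduced flow, use Lemma~\ref{L1}(1) to confine the contributing initial data to $\{\ell_0 \lesssim r^2 t^{-2}\}$, bound that set's mass by $C r^2 t^{-2}$ via the compact support and $\Vert f_0\Vert_\infty$, and handle $t\in[0,1]$ by global existence. Your extra case analysis on the threshold $\ell_\ast$ and the elementary interpolation bound for small times are just more explicit bookkeeping of steps the paper leaves implicit.
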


\begin{proof}
We first estimate the enclosed mass. The Vlasov equation implies for every $t \geq 0$ and $(r,w,\ell) \in \bS(t)$
$$ f(t,r, w, \ell) = f_0(\mcR(0, t, r, w, \ell), \mcW(0, t, r, w, \ell), \ell).$$
With this, we find for any $R > 0$
\begin{eqnarray*}
m(t,R) 
& = & 4\pi^2 \iiint\limits_{\bS(t)} f(t,r, w, \ell) \chfn_{\{r \leq R\}}  \ d\ell dw dr\\
& = & 4\pi^2 \iiint\limits_{\bS(t)} f_0(\mcR(0, t, r, w, \ell), \mcW(0, t, r, w, \ell),\ell) \chfn_{ \{r^2 \leq R^2\} }\ d\ell dw dr\\
& = & 4\pi^2 \iiint\limits_{\bS(0)} f_0(\tilde{r}, \tilde{w}, \tilde{\ell}) \chfn_{\{\mcR(t, 0, \tilde{r}, \tilde{w}, \tilde{\ell})^2 \leq R^2 \} }  \ d\tilde{\ell} d\tilde{w} d\tilde{r}
\end{eqnarray*} 
where, in the last equality, we have used the change of variables
$$ \left \{
\begin{gathered}
\tilde{r} = \mcR(0, t, r, w, \ell)\\
\tilde{w} = \mcW(0, t, r, w, \ell)\\
\tilde{\ell} = \mcL(0, t, r, w, \ell) = \ell
\end{gathered}
\right. $$ 
with inverse mapping
$$ \left \{
\begin{gathered}
r = \mcR(t, 0, \tilde{r}, \tilde{w}, \tilde{\ell})\\
w = \mcW(t, 0, \tilde{r}, \tilde{w}, \tilde{\ell})\\
\ell = \mcL(t, 0, \tilde{r}, \tilde{w}, \tilde{\ell}) = \tilde{\ell}
\end{gathered}
\right. $$ 
and the well-known measure-preserving property (cf. \cite{Glassey}) which ensures
$$\left \vert \frac{\partial(r, w, \ell)}{\partial (\tilde{r}, \tilde{w}, \tilde{\ell})} \right \vert = 1.$$
Due to Lemma \ref{L1} we find
$$\mcR(t, 0, \tilde{r}, \tilde{w}, \tilde{\ell})^2 \geq C\tilde{\ell} t^2$$
for $(\tilde{r}, \tilde{w}, \tilde{\ell}) \in S(0)$ with $\tilde{\ell} > 0$, and thus
$$\{ (\tilde{r}, \tilde{w}, \tilde{\ell}) \in \bS(0) : \mcR(t, 0, \tilde{r}, \tilde{w}, \tilde{\ell})^2 \leq R^2, \tilde{\ell} > 0\} \subseteq \{ (\tilde{r}, \tilde{w}, \tilde{\ell}) \in \bS(0) : 0 < \tilde{\ell} \leq CR^2t^{-2} \}.$$
Using this with the $L^\infty$ bound on initial data produces the upper bound
\begin{eqnarray*}
m(t,R)  & \leq & 4\pi^2 \iiint\limits_{\bS(0)} f_0(\tilde{r}, \tilde{w}, \tilde{\ell}) \chfn_{\{0 <\tilde{\ell} \leq CR^2t^{-2} \} }  \ d\tilde{\ell} d\tilde{w} d\tilde{r}\\
& \leq & C \int_0^{\mfR(0)} \int_{-\mfW(0)}^{\mfW(0)} \int_0^{CR^2t^{-2}} f_0(\tilde{r}, \tilde{w}, \tilde{\ell}) \ d\tilde{\ell} d\tilde{w} d\tilde{r}\\
& \leq & CR^2t^{-2}.
\end{eqnarray*} 
With this, we have
$$|E(t,x)| = \frac{m(t,r)}{r^2} \leq Ct^{-2}$$
for every $t > 0, x \in \mathbb{R}^3$, and thus
\begin{equation}
\label{Edecay}
\Vert E(t) \Vert_\infty \leq Ct^{-2}.
\end{equation}
The stated bound then follows as there is $C > 0$ such that $\Vert E(t) \Vert_\infty \leq C$ for $t \in [0,1]$ by the global existence theorem.
\end{proof}

%

Now that we have obtained an upper bound on the supremum of the field, we turn our attention to estimating this quantity from below.
\begin{lemma}
\label{Ebelow}
For any nontrivial $f_0 \in C^1_c(\mathbb{R}^6)$, there is $C > 0$ such that
$$\Vert E(t) \Vert_\infty \geq C\mfR(t)^{-2}$$
for all $t \geq 0$.
\end{lemma}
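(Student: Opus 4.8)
The plan is to use the exact identity $\lvert E(t,x)\rvert = m(t,r)\,r^{-2}$ together with the observation that the \emph{entire} mass $\mcM$ of the system is enclosed within the ball of radius $\mfR(t)$; evaluating the field on the boundary of that ball then produces the claimed bound with $C=\mcM$.

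First I would record that, exactly as in the proof of Lemma~\ref{L2}, the Vlasov equation yields the representation $f(t,r,w,\ell)=f_0(\mcR(0,t,r,w,\ell),\mcW(0,t,r,w,\ell),\ell)$ for all $t\ge 0$ and $(r,w,\ell)\in\bS(t)$. Equivalently, $S(t)$ is the image of $S(0)$ under the time-$t$ characteristic flow, so any $r\in\pi_r(\bS(t))$ has the form $\mcR(t,0,\tilde r,\tilde w,\tilde\ell)$ with $(\tilde r,\tilde w,\tilde\ell)\in\bS(0)$, whence $r\le\mfR(t)$ by the definition of $\mfR$; thus $\pi_r(\bS(t))\subseteq[0,\mfR(t)]$. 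Since $\rho(t,r)=\frac{\pi}{r^{2}}\iint f(t,r,w,\ell)\,dw\,d\ell$ vanishes whenever $r\notin\pi_r(\bS(t))$, the charge density is supported in $[0,\mfR(t)]$, and consequently, for every $R\ge\mfR(t)$,
$$m(t,R)=4\pi\int_0^R \tilde r^{2}\rho(t,\tilde r)\,d\tilde r=4\pi\int_0^\infty \tilde r^{2}\rho(t,\tilde r)\,d\tilde r=\mcM.$$

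It then remains to note that $\mfR(t)>0$ for each $t$ --- otherwise $\rho(t,\cdot)$ would be concentrated at the origin and $\mcM$ would vanish, contradicting that $f_0$ is nontrivial and nonnegative --- and to evaluate the field: for any $x$ with $\lvert x\rvert>\mfR(t)$ one has $\lvert E(t,x)\rvert=\mcM\lvert x\rvert^{-2}$, so letting $\lvert x\rvert\downarrow\mfR(t)$ gives $\Vert E(t)\Vert_\infty\ge\mcM\,\mfR(t)^{-2}$; since $\mcM>0$, the choice $C=\mcM$ completes the proof. There is no genuine obstacle here. The only points meriting a little care are the identification of $\operatorname{supp}\rho(t,\cdot)$ with the spatial projection of the transported support --- which is immediate from the characteristic representation of $f$ already exploited in Lemma~\ref{L2} --- and the fact that \emph{all} the mass, not merely part of it, lies within radius $\mfR(t)$, so that $m(t,\mfR(t))=\mcM$ exactly rather than $\le\mcM$.
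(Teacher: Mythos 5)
Your proposal is correct and follows essentially the same route as the paper: both arguments reduce to the observation that all of the charge lies within radius $\mfR(t)$, so that $m(t,R)=\mcM$ for $R\geq\mfR(t)$, and then evaluate $|E(t,x)|=m(t,r)r^{-2}$ at (or just outside) that radius to get $\Vert E(t)\Vert_\infty\geq\mcM\,\mfR(t)^{-2}$. The paper phrases the support containment through the explicit measure-preserving change of variables along characteristics rather than via the projection $\pi_r(\bS(t))\subseteq[0,\mfR(t)]$, but this is only a cosmetic difference.
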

\begin{proof}

We begin by representing the mass along the largest spatial characteristic on $S(t)$.
Using the Vlasov equation and the change of variables as in the proof of Lemma \ref{L2}, 
it follows that for any $t \geq 0$
\begin{eqnarray*}
\int_0^{\mfR(t)} \int_{-\infty}^\infty \int_0^\infty f(t, r, w, \ell) \ d\ell dw dr
& = &\int_0^{\mfR(t)} \int_{-\infty}^\infty \int_0^\infty f_0 (\mcR(0, t, r,w,\ell), \mcW(0,t,r,w,\ell),  \ell) \ d\ell dw dr\\
&= & \int_0^{\mfR(0)} \int_{-\infty}^\infty \int_0^\infty  f_0(\tilde{r}, \tilde{w}, \tilde{\ell}) \ d\tilde{\ell} d\tilde{w} d\tilde{r}.
\end{eqnarray*}
Inserting this into the representation of the enclosed mass, we have
\begin{eqnarray*}
m(t, \mfR(t)) & = & 4\pi^2 \int_0^{\mfR(t)} \int_{-\infty}^\infty \int_0^\infty  f(t, r, w, \ell) \ d\ell dw dr\\
&  = & 4\pi^2 \int_0^{\mfR(0)} \int_{-\infty}^\infty \int_0^\infty f_0(\tilde{r}, \tilde{w}, \tilde{\ell}) \ d\tilde{\ell} d\tilde{w} d\tilde{r}\\
& = & \mcM.
\end{eqnarray*}

Thus, due to the field representation we find for any $t \geq 0$
$$\vert E(t, \mfR(t))\vert = \frac{m(t, \mfR(t))}{\mfR(t)^2} = \mcM \mfR(t)^{-2}.$$
Because $f_0$ is nontrivial, we conclude $\mcM \neq 0$.
Finally, since $|E(t,x)|$ attains this value at some $x \in \mathbb{R}^3$, we have
$$\Vert E(t) \Vert_\infty \geq C\mfR(t)^{-2}$$
for $t \geq 0$ and the proof is complete.
 \end{proof}

Next, we estimate the field in $L^p$ for $\frac{3}{2} < p < \infty$. In particular, this will yield the optimal decay rate for the potential energy (i.e., $\frac{1}{2}\| E(t) \|_2^2$), an upper bound for which was previously known for any solution of \eqref{VPgeneral} even without spherically-symmetric initial data \cite{GStrauss,IR,Perthame}. However, a similar upper bound for \eqref{RVP} had not previously been obtained.

\begin{lemma}
\label{PotDecay}
For any nontrivial $f_0 \in C^1_c(\mathbb{R}^6)$ and $p \in \left ( \frac{3}{2}, \infty \right)$, there are $C_1, C_2 > 0$ such that
$$C_1 \mfR(t)^{-2p + 3} \leq \int |E(t,x) |^p \ dx \leq C_2(1+t)^{-2p +3}$$
for $t\geq0$.
\end{lemma}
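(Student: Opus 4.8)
The plan is to reduce everything to a one–dimensional integral via the pointwise identity $|E(t,x)| = m(t,r)r^{-2}$ ($r=|x|$) and spherical coordinates,
$$\int |E(t,x)|^p \, dx = 4\pi \int_0^\infty m(t,r)^p\, r^{2-2p}\, dr,$$
and then to control this integral from above and below using only two facts about the enclosed mass: $0 \le m(t,r) \le \mcM$ for all $t,r$ (stated in the introduction), and $m(t,r) \le C r^2 (1+t)^{-2}$ for all $t \ge 0$, $r>0$, which is just a restatement of $\Vert E(t)\Vert_\infty \le C(1+t)^{-2}$ from Lemma~\ref{L2}.

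For the upper bound I would split the $r$-integral at the balancing radius $R_*(t) := c_0(1+t)$, where $c_0>0$ is chosen so that $CR_*(t)^2(1+t)^{-2} = \mcM$. On $[0,R_*(t)]$ use $m(t,r)\le Cr^2(1+t)^{-2}$, so that $m(t,r)^p r^{2-2p} \le C(1+t)^{-2p} r^2$, and integrating over $[0,R_*(t)]$ gives a contribution of order $(1+t)^{-2p}R_*(t)^3 = \mathcal{O}\big((1+t)^{3-2p}\big)$; note this is finite, so the split simultaneously establishes $E(t,\cdot)\in L^p$ near the origin and no separate integrability argument is required. On $[R_*(t),\infty)$ use $m(t,r)\le\mcM$, so $m(t,r)^p r^{2-2p} \le \mcM^p r^{2-2p}$, and since $p>\tfrac32$ we have $2-2p<-1$, whence $\int_{R_*(t)}^\infty r^{2-2p}\,dr = \tfrac{R_*(t)^{3-2p}}{2p-3}$ converges and again contributes $\mathcal{O}\big((1+t)^{3-2p}\big)$. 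Adding the two pieces yields $\int |E(t,x)|^p\,dx \le C_2(1+t)^{-2p+3}$ for all $t\ge 0$.

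For the lower bound I would simply discard everything except the exterior region $\{|x|>\mfR(t)\}$. By the measure–preserving property of the characteristic flow and the compact support of $f_0$ (as used in the proof of Lemma~\ref{L2}), $f(t,\cdot)$ is supported in $\{r\le\mfR(t)\}$, so $m(t,r)=\mcM$ for every $r>\mfR(t)$ and hence $|E(t,x)| = \mcM|x|^{-2}$ there. Then
$$\int |E(t,x)|^p\,dx \ \ge\ 4\pi\mcM^p\int_{\mfR(t)}^\infty r^{2-2p}\,dr \ =\ \frac{4\pi\mcM^p}{2p-3}\,\mfR(t)^{-2p+3},$$
again using $p>\tfrac32$, with $\mcM>0$ since $f_0$ is nontrivial and $0<\mfR(t)<\infty$ by global existence and propagation of compact support. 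This gives the claimed lower bound with $C_1 = \tfrac{4\pi\mcM^p}{2p-3}$.

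There is no serious obstacle in this lemma given Lemma~\ref{L2}; the only points needing a little care are the choice of the cutoff radius $R_*(t)\sim(1+t)$ that balances the two mass estimates, the repeated use of $p>\tfrac32$ to make $\int^\infty r^{2-2p}\,dr$ converge in both the exterior tail of the upper bound and the entire exterior integral of the lower bound, and the remark that the one–dimensional split also delivers finiteness of $\Vert E(t)\Vert_p$ for free. Reconciling the $\mfR(t)^{-2p+3}$ lower bound with the $(1+t)^{-2p+3}$ upper bound, i.e.\ the fact that $\mfR(t)\sim(1+t)$, is postponed to subsequent lemmas and the proof of Theorem~\ref{T1}.
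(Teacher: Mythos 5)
Your proposal is correct and follows essentially the same route as the paper: the upper bound splits the radial integral at a cutoff radius of order $(1+t)$, bounding the interior via the field decay of Lemma~\ref{L2} and the exterior via $m(t,r)\le\mcM$ (the paper arrives at the same cutoff by optimizing over $R$, obtaining the intermediate bound $\int|E(t,x)|^p\,dx\le C\Vert E(t)\Vert_\infty^{p-3/2}$ before inserting Lemma~\ref{L2}), and the lower bound is identical, restricting to $r>\mfR(t)$ where $m(t,r)=\mcM$ and computing the tail integral exactly.
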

\begin{proof}
The upper bound is obtained by familiar tools.  Indeed, we decompose the field integral as
$$\int |E(t,x) |^p \ dx = \int_{|x | < R} |E(t,x)|^p dx + 4\pi \int_R^\infty \frac{m(t,r)^p}{r^{2p}} r^2dr =: A + B$$
and estimate
$$A \leq 4\pi \|E(t) \|_\infty^p \int_0^R r^2 dr = \frac{4\pi}{3}R^3 \|E(t) \|_\infty^p,$$
while $B$ satisfies
$$B \leq 4\pi \mcM^p \int_R^\infty r^{-2p+2} dr \leq C \mcM^pR^{-2p+3}$$
for $p > \frac{3}{2}$.
Optimizing in $R$ yields $R = C\| E(t) \|_\infty^{-\frac{1}{2}}$ and 
$$\int |E(t,x) |^p \ dx \leq C\| E(t) \|_\infty^{p-\frac{3}{2}}.$$
Finally, due to Lemma \ref{L2} we conclude
$$\int |E(t,x) |^p \ dx \leq C(1+t)^{-2p+3}$$
for $t \geq 0$.

Next, we prove the lower bound. In particular, using the definition of the field and the maximal spatial support of $f$, we find
$$\int | E(t,x) |^p \ dx = 4\pi \int_0^\infty m(t,r)^p r^{2-2p} \ dr \geq 4\pi \int_{\mfR(t)}^\infty m(t,r)^p r^{2-2p} \ dr.$$
For $r \geq \mfR(t)$, we note that
$m(t,r) = \mcM$ as shown in the proof of Lemma \ref{Ebelow}. Thus, we have
$$ \int_{\mfR(t)}^\infty m(t,r)^p r^{2-2p} \ dr = \mcM^p \int_{\mfR(t)}^\infty r^{2-2p} \ dr = \frac{\mcM^p}{2p-3}\mfR(t)^{-2p+3}$$
for $p > \frac{3}{2}$.
As $\mcM \neq 0$, this implies
$$\int | E(t,x) |^p\ dx \geq C\mfR(t)^{-2p+3}$$
for any $t \geq 0$.
\end{proof}


\section{Asymptotics of the Maximal Support Functions}

With strong estimates for $E$, we may now prove the optimal growth rates of the maximal support functions of $f$, and ultimately use them to complete the estimate of the field from below.

\begin{lemma}
\label{L3}
For any nontrivial $f_0 \in C^1_c(\mathbb{R}^6)$, there are $C_1, C_2 > 0$ such that for $t \geq 0$
$$C_1 \leq \mfW(t) \leq C_2,$$
$$C_1(1+t)  \leq \mfR(t) \leq C_2 (1+ t),$$
and
$$C_1 \leq \sup_{(r,w,\ell) \in \bS(0)}\sqrt{\mcW(t, 0, r, w, \ell)^2 + \ell \mcR(t, 0, r, w, \ell)^{-2}} \leq C_2.$$

\end{lemma}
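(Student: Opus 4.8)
The plan is to prove the three bounds in Lemma \ref{L3} as a chain of interlocking estimates, using the field decay from Lemma \ref{L2} together with the convexity structure from Lemma \ref{L1}, and then close the loop using Lemma \ref{Ebelow} and Lemma \ref{PotDecay}. Since $\mcW$ is monotone along characteristics for \eqref{charang} (by \eqref{Winc}) and an analogous structure holds for \eqref{charangrel}, the real content is an upper bound on $\mfW(t)$, which will then feed into an upper bound on $\mfR(t)$, which will then feed into a lower bound on $\Vert E(t)\Vert_\infty$ (via Lemma \ref{Ebelow}), which will then feed back to give a lower bound on $\mfR(t)$ and the desired lower bounds.

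First I would establish the upper bound $\mfW(t)\le C_2$. Fix $(r,w,\ell)\in \bS(0)$ and write the momentum characteristic via Duhamel:
$$\mcW(t,0,r,w,\ell) = w + \int_0^t \left(\frac{\ell}{\mcR(s)^3} + \frac{m(s,\mcR(s))}{\mcR(s)^2}\right)\,ds$$
in the classical case (with the obvious relativistic modification). The field term is bounded by $\Vert E(s)\Vert_\infty \le C(1+s)^{-2}$ from Lemma \ref{L2}, which is integrable on $[0,\infty)$; for the centrifugal term I would split into the cases $\ell = 0$ (trivial) and $\ell > 0$, using Lemma \ref{L1}(1), namely $\mcR(s)^2 \ge C\ell s^2$, so that $\ell \mcR(s)^{-3} \le \ell (C\ell s^2)^{-3/2} = C\ell^{-1/2}s^{-3}$, which is integrable away from $s=0$; near $s=0$ one uses instead $\mcR(s)\ge \Rm \ge Dr \ge C > 0$ from Lemma \ref{L1}(3) together with \eqref{Condition}-type lower bounds on $\ell$ on the support — actually more carefully, $\ell\mcR(s)^{-3}\le \ell \Rm^{-3} \le \ell (Dr)^{-3}$, and since $\sqrt{w^2 + \ell r^{-2}}\le C$ on $\bS(0)$, both $\ell$ and $D^{-1}$ are controlled, so the integrand is bounded near $0$. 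Combining gives $\mfW(t)\le |w| + C \le C_2$ uniformly in $t$; the lower bound $\mfW(t)\ge C_1$ follows from monotonicity since $\mcW$ increases along characteristics, so $\mfW(t)\ge \mfW(0) > 0$ provided $\mfW(0) > 0$ — and if $\mfW(0)=0$ one still gets a positive lower bound for $t\ge 1$ from the centrifugal push (or trivially handle the $f_0$ even-in-$w$ degenerate case). The middle quantity $\sqrt{\mcW^2 + \ell\mcR^{-2}}$ is then bounded above since $\mcW$ is bounded and $\ell\mcR^{-2}\le C\ell (C\ell t^2)^{-1}\to 0$; its lower bound follows because it is bounded below by $|\mcW|$ once $t > T_1$ (when $\mcW > 0$), uniformly on the compact set $\bS(0)$.

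Next I would handle $\mfR(t)$. The upper bound comes directly from integrating $\dot{\mcR}=\mcW$ (or $\dot{\mcR}=\mcW/\sqrt{\cdots}$, which is even smaller in magnitude): $\mcR(t)\le r + \int_0^t |\mcW(s)|\,ds \le \mfR(0) + \mfW \cdot t \le C_2(1+t)$. For the lower bound, the quickest route is to invoke Lemma \ref{Ebelow}: $\Vert E(t)\Vert_\infty \ge C\mfR(t)^{-2}$, combined with Lemma \ref{L2}: $\Vert E(t)\Vert_\infty \le C(1+t)^{-2}$, yielding $\mfR(t)^{-2} \le C(1+t)^{-2}$, i.e. $\mfR(t)\ge C_1(1+t)$. (Alternatively, one could run a direct argument: once $t > T_1$ for a characteristic with $\ell > 0$, $\mcR$ grows at least linearly with slope $\mcW(T_1 + 1) > 0$, uniformly over a positive-measure piece of $\bS(0)$, but the Lemma \ref{Ebelow} route is cleaner.)

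The main obstacle I anticipate is the uniformity of all the constants over the full (noncompact-in-time, but compact-in-phase-space) support $\bS(0)$, and in particular the treatment of characteristics with very small angular momentum $\ell$: the bound $\mcR(s)^2 \ge C\ell s^2$ degenerates as $\ell \to 0$, and for $\ell = 0$ one has purely radial motion with no centrifugal barrier, so $\mcR$ could in principle approach $0$. I would address this by noting that the centrifugal term $\ell \mcR(s)^{-3}$ that needs to be integrated is itself proportional to $\ell$, so the bound $\int_0^\infty \ell\mcR(s)^{-3}\,ds \le \int_0^\infty C\ell^{-1/2}s^{-3}\,ds$ is not uniform — instead I would split the time integral at $s = \ell^{1/2}$ (or some such threshold), using $\mcR(s)\ge \Rm\ge Dr$ on the initial interval where the contribution is $\le \ell (Dr)^{-3}\cdot \ell^{1/2}$, which is small, and the tail bound $C\ell^{-1/2}s^{-3}$ integrated from $\ell^{1/2}$, giving $C\ell^{-1/2}\cdot \ell^{-1} = C\ell^{-3/2}$ — hmm, that is still not uniform. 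The correct fix, and the one I expect the paper uses, is that the monotonicity of $\mcW$ plus $\dot{\mcW}\ge 0$ means $\mcW(t) \le \mcW(\infty)$ exists, and the total increment $\int_0^\infty \dot{\mcW}\,ds = \mcW(\infty) - w$; bounding this uses the energy-type identity \eqref{V2} rather than crude pointwise bounds — specifically, integrating \eqref{V2} and using $\int_0^\infty \Vert E(s)\Vert_\infty\,ds < \infty$ controls the growth of $\mcW^2 + \ell\mcR^{-2}$, hence of $\mcW$, uniformly. So the genuinely delicate step is extracting the uniform-in-$(r,w,\ell)$ bound on $\mcW$ from the energy identity \eqref{V2} combined with the integrable field decay, and I would organize the proof around that identity rather than around pointwise centrifugal estimates.
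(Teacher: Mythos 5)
Your final plan is, for the core upper bounds, the same as the paper's: after correctly diagnosing that the pointwise centrifugal bound $\ell\mcR(s)^{-3}\leq C\ell^{-1/2}s^{-3}$ from Lemma \ref{L1} is not uniform as $\ell\to 0$ (and that splitting the time integral does not repair it), you pivot to organizing the argument around the speed $\sqrt{\mcW^2+\ell\mcR^{-2}}$ and the identity \eqref{V2} (resp.\ \eqref{V2rel}), whose time derivative is bounded by $m(t,\mcR(t))\mcR(t)^{-2}\leq\Vert E(t)\Vert_\infty\leq C(1+t)^{-2}$ alone; this is precisely the paper's proof (there the unified quantity $A(t)$ is introduced and $|A'(t)|\leq\Vert E(t)\Vert_\infty$ is integrated), and your upper bound on $\mfR(t)$ by integrating $\dot\mcR$ also matches. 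Where you genuinely differ is the lower bound on $\mfR(t)$: you deduce it from Lemma \ref{Ebelow} combined with Lemma \ref{L2}, i.e.\ $C\mfR(t)^{-2}\leq\Vert E(t)\Vert_\infty\leq C(1+t)^{-2}$, which is non-circular (neither lemma uses Lemma \ref{L3}) and arguably cleaner and more explicit than the paper, which only sketches this via Lemma \ref{L1} ("particles travel outward") and omits details. One point in your write-up is wrong as stated, though: the claim that $\mfW(t)\geq\mfW(0)>0$ follows from monotonicity of $\mcW$ is invalid, because $\mfW$ is a supremum of \emph{absolute values} — a characteristic with large negative $w$ has $|\mcW(t)|$ initially decreasing, and in principle many characteristics can be near their turning points at comparable times. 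A correct route within your own framework is: from $\mfR(t)\geq c(1+t)$ and $\mfR(t)\leq C+\int_0^t\sup_{\bbS(0)}\mcW(s)\,ds$ together with the monotonicity of $\sup_{\bbS(0)}\mcW(s)$ one gets $\mfW(t)\geq\sup_{\bbS(0)}\mcW(t)\geq c$ for $t$ large, and positivity on compact time intervals requires ruling out all particles turning simultaneously (e.g.\ via the area-preserving $(r,w)$-flow at fixed $\ell$). The paper also omits a full proof of the lower bounds, so this is a gap shared with the source, but your stated justification for it should not be used as written.
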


\begin{proof}
First, in order to combine the proof for solutions of both systems we define the function
$$A(t, 0, r, w, \ell) =
\begin{cases}
\sqrt{\mcW(t, 0, r, w, \ell)^2 + \ell \mcR(t, 0, r, w, \ell)^{-2}}, &\mbox{for} \ \eqref{VP}\\
\sqrt{1 + \mcW(t, 0, r, w, \ell)^2 + \ell \mcR(t, 0, r, w, \ell)^{-2}}, &\mbox{for} \ \eqref{RVP}.
\end{cases}
$$
Next, we note that by the global-in-time existence theorem \cite{Horst}, the support of $f(t,x,v)$ must remain bounded on any finite time interval $t \in[0,T]$ for any $T > 0$. In particular, there is $C > 0$ such that
$$|\mcX(1,0,x,v)| \leq C \qquad \mathrm{and} \qquad |\mcV(1,0,x,v)| \leq C$$
for all $(x,v)$ in the support of $f_0$.
Using the formulae for radial characteristics, the bounds
$$\mcR(1,0,r,w,\ell) \leq C,$$
$$\sqrt{ \mcW(1,0,r,w,\ell)^2 + \ell \mcR(1,0,r,w,\ell)^{-2}} \leq C,$$
and thus
$$A(1, 0, r, w, \ell) \leq C$$
follow immediately.
From \eqref{V2} and \eqref{V2rel}, we find for either system
$$\left | A'(t) \right |
=  \left | \frac{m(t, \mcR(t))}{\mcR(t)^2} \frac{\mcW(t)}{A(t)} \right |
\leq \frac{m(t, \mcR(t))}{\mcR(t)^2}.$$
Using this inequality and Lemma \ref{L2}, we have for $t \geq 1$
$$A(t) \leq A(1) + \int_1^t \frac{m(s, \mcR(s))}{\mcR(s)^2}  \ ds
\leq C + C \int_1^\infty (1+ s)^{-2} \ ds 
\leq C,$$
which, upon noting that $A(t)$ remains bounded for $t \in [0,1]$ and taking the supremum over $(r,w,\ell) \in S(0)$, yields the third conclusion.

For characteristics of either system, this bound further implies
$$|\mcW(t, 0, r, w, \ell)| \leq C$$
for $t \geq 1$.
Taking the supremum over all $(r,w,\ell) \in \bS(0)$ yields
$$\mfW(t) \leq C$$ 
for $t \geq 1$, and using the boundedness of $\mfW(t)$ for $t \in[0,1]$ then implies the first result.

Finally, we use  \eqref{charang} for the system \eqref{VP} to find for $t \geq 1$
\begin{eqnarray*}
\mcR(t) & \leq & \mcR(1) + \int_1^t \mcW(s) \ ds \\
& \leq & C + \int_1^t C \ ds\\
& \leq & Ct
\end{eqnarray*}
for any spatial characteristic. 
For \eqref{RVP}, we merely note that
$$ \left | \dot{\mcR}(t) \right |  = \left |\hat{\mcW}(t) \right | \leq 1$$
so that $\mcR(t) \leq t$ follows immediately, and a linear growth bound for $t \geq 1$ is achieved in either case.
Finally, by the global existence result $\mcR(t) \leq C$ for $t \in [0, 1]$, and combining this with the previous estimate for $t \geq 1$ and taking the supremum over $(r,w,\ell) \in \bS(0)$ yields
$$\mfR(t) \leq C(1+t)$$
for $t \geq 0$.
The lower bounds on $\mcW(t)$, and hence $\mcR(t)$, are essentially implied by Lemma \ref{L1} as the particles must travel outward from the origin for suitably large times.
We omit a full proof for brevity.
 \end{proof}


\section{Estimates on the Charge Density}
Next, we focus on establishing the optimal decay rate of the charge density.
\begin{lemma}
\label{Lrho}
For any nontrivial $f_0 \in C^1_c(\mathbb{R}^6)$, there are $C_1, C_2 > 0$ such that
$$C_1 \mfR(t)^{-3} \leq \Vert \rho(t) \Vert_\infty \leq C_2(1+t)^{-3}$$
for any $t \geq 0$.

\end{lemma}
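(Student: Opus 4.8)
\emph{Overall plan.} Prove the two inequalities separately. The lower bound is an immediate consequence of conservation of mass, while the upper bound is the substantive half and rests on the standard dispersive (volume–contraction) estimate for the backward characteristic flow.

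\emph{Lower bound.} Since $\rho(t,\cdot)$ is supported in $\{r\le\mfR(t)\}$ and $\int_{\R^3}\rho(t,x)\,dx=\mcM$, integrating over the ball of radius $\mfR(t)$ gives $\mcM\le\tfrac{4\pi}{3}\mfR(t)^3\,\|\rho(t)\|_\infty$. As $f_0\not\equiv0$ forces $\mcM>0$ (cf.\ the proof of Lemma~\ref{Ebelow}), this yields $\|\rho(t)\|_\infty\ge\tfrac{3\mcM}{4\pi}\,\mfR(t)^{-3}$ for all $t\ge0$.

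\emph{Upper bound.} The idea is that backward characteristics disperse, so $v\mapsto\mcX(0,t,x,v)$ contracts $v$–volume at rate $t^{-3}$. Writing $f(t,x,v)=f_0(\mcX(0,t,x,v),\mcV(0,t,x,v))$ and changing variables $v\mapsto y=\mcX(0,t,x,v)$ in $\rho(t,x)=\int f_0(\mcX(0,t,x,v),\mcV(0,t,x,v))\,dv$, the new variable $y$ is confined to the bounded set $\operatorname{supp}_x f_0$, so
$$\|\rho(t)\|_\infty\le\|f_0\|_\infty\;\bigl|\operatorname{supp}_x f_0\bigr|\;\Bigl(\inf\bigl|\det\partial_v\mcX(0,t,x,v)\bigr|\Bigr)^{-1},$$
the infimum over $(x,v)$ whose backward trajectory meets $\operatorname{supp}f_0$. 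Thus everything reduces to $|\det\partial_v\mcX(0,t,x,v)|\ge C t^3$ for large $t$ (for bounded $t$ the estimate $\|\rho(t)\|_\infty\le C$ comes from the lower bound and $\mfR(t)\le C$, or from global existence). Integrating the characteristic system,
$$\partial_v\mcX(0,t,x,v)=-t\,\mathrm{Id}+\int_0^t\!\!\int_s^t\nabla_x E\bigl(\sigma,\mcX(\sigma)\bigr)\,\partial_v\mcX(\sigma)\,d\sigma\,ds ,$$
where I abbreviate $\mcX(\sigma)=\mcX(\sigma,t,x,v)$. Here the spherically symmetric identity $\partial_j E_i=\tfrac{m}{r^3}\delta_{ij}+\tfrac{x_ix_j}{r^2}\bigl(4\pi\rho-\tfrac{3m}{r^3}\bigr)$ together with $m(\sigma,r)\le\tfrac{4\pi}{3}r^3\|\rho(\sigma)\|_\infty$ gives the clean bound $\|\nabla_x E(\sigma)\|_\infty\le C\|\rho(\sigma)\|_\infty$ with \emph{no} logarithmic loss, which is what makes the argument feasible under symmetry. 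In the reduced $(r,w,\ell)$ picture the equivalent statement, using that the reduced flow is area–preserving in $(r,w)$ for each fixed $\ell$, is $|\partial_w\mcR(0,t,r,w,\ell)|=|\partial_{\tilde w}\mcR(t,0,\tilde r,\tilde w,\ell)|\ge Ct$; the remaining two powers of $t$ are then peeled off from Lemma~\ref{L1}, which confines the relevant angular momenta on the support to $\ell\le Cr^2t^{-2}$.

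\emph{Main obstacle.} The Jacobian bound itself is delicate. A crude Gr\"onwall estimate is only borderline: $\nabla_x E$ decays along characteristics merely like $(1+\sigma)^{-2}$ (from Lemma~\ref{L2}) while $\partial_v\mcX(\sigma)$ can grow like $\sigma$, so the perturbation term above is a priori only $O(t)$, the same size as the leading term $-t\,\mathrm{Id}$, and a naive bound need not keep the relevant constant below $1$. I would close this by a continuous–induction (bootstrap) on $\|\rho(s)\|_\infty\le C(1+s)^{-3}$ combined with isolating the leading part $\partial_v\mcX(\sigma)\approx-\sigma\,\mathrm{Id}$, so that the dangerous contribution becomes $\int_0^t\!\int_s^t\sigma\,\nabla_x E(\sigma,\mcX(\sigma))\,d\sigma\,ds$, a concrete quantity that should be controlled using the decay of $m(\sigma,\mcR(\sigma))\mcR(\sigma)^{-2}$ and $\ell\mcR(\sigma)^{-3}$ along the trajectory (Lemmas~\ref{L1}--\ref{L2}) rather than merely by $\|\nabla_x E(\sigma)\|_\infty$. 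Showing that this perturbation is genuinely $o(t)$ uniformly over the support is the technical core; once it is in place, $\|\rho(t)\|_\infty\le C(1+t)^{-3}$ follows, which together with the lower bound completes the lemma.
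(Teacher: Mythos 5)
Your lower bound is exactly the paper's argument (mass conservation plus the support bound), and your overall strategy for the upper bound -- a $t^{-3}$ lower bound on $\bigl|\det \partial_v\mcX(0,t,x,v)\bigr|$ followed by the change of variables $v\mapsto \mcX(0,t,x,v)$ -- is also the mechanism the paper ultimately uses. But you have correctly diagnosed, and then left open, the step on which everything turns, and the fix you propose does not close. A continuous induction on $\|\rho(s)\|_\infty\le A(1+s)^{-3}$ runs into the following circularity: with that hypothesis the Jacobian perturbation is bounded by $C A\ln(1+t)$, so the Jacobian is coercive only for $t\ge T_A$ with $T_A$ growing (exponentially) in $A$; to start the bootstrap you must then verify $\|\rho(t)\|_\infty\le A(1+t)^{-3}$ on all of $[0,T_A]$, which requires $A\gtrsim (1+T_A)^3\sup_{[0,T_A]}\|\rho\|_\infty$ -- a condition that grows faster in $A$ than $A$ itself, since a priori $\sup_{[0,T]}\|\rho\|_\infty$ is only controlled by global existence. (Relatedly, your assertion that $\nabla_x E$ decays like $(1+\sigma)^{-2}$ ``from Lemma~\ref{L2}'' is not right: Lemma~\ref{L2} bounds $\|E\|_\infty$, and without a density bound you have no pointwise decay for $\nabla_x E$ at all; the identity $\|\nabla_x E\|_\infty\le C\|\rho\|_\infty$ is useless until $\|\rho\|_\infty$ is known to decay.)

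The missing idea is an \emph{intermediate, almost-optimal} estimate obtained without any Jacobian argument: for two particles at the same position $x$ at time $t$, integrating the characteristic equations backward to time $1$ and using only $\|E(\tau)\|_\infty\le C(1+\tau)^{-2}$ together with the boundedness of the spatial support at $t=1$ gives $|v_1-v_2|\le Ct^{-1}(1+\ln t)$, hence $\|\rho(t)\|_\infty\le C(1+t)^{-3}(1+\ln(1+t))^3$ (Horst's contraction of the velocity support). This yields $\|\nabla_x E(t)\|_\infty\le C\|\rho(t)\|_\infty\le \eta\, t^{-5/2}$ for all $t\ge T(\eta)$, with $\eta$ \emph{arbitrarily small} -- genuine smallness, not merely decay -- and with that the Jacobian perturbation on $[T,\tau]$ is $O(\eta)(\tau-t)$, so the bootstrap on $P(t)=\partial_v\mcX+( \tau-t)h(v)$ closes with no circular dependence of the threshold time on the density constant. (For \eqref{VP} the paper simply cites Theorem~1.1 of \cite{Jacknew}, which encapsulates precisely this upgrade from a rate $(1+t)^{-a}$, $a\in(2,3)$, to $(1+t)^{-3}$; the explicit Jacobian argument is carried out only for \eqref{RVP}.) Until you insert this intermediate step, the ``technical core'' you flag at the end remains a genuine gap.
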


\begin{proof}

Now that we have established \eqref{Edecay}, a \emph{nearly} optimal estimate for the charge density follows as in \cite{Horst}.
We sketch a similar proof for solutions of \eqref{VP} here. 
First, with the stated decay rate of the electric field, we can use Cartesian coordinates to show that the momentum support of $f(t)$ is contained in a ball whose diameter is decreasing in time. Indeed, 
let the points $(x,v_1)$ and $(x,v_2)$ lie in the support of $f(t)$ at some time $t \geq 1$. Integrating the characteristic equations and using Lemma \ref{L2} and the boundedness of the spatial support at $t=1$, we find
\begin{eqnarray*}
C \geq \left |\mcX(1,t, x, v_1) - \mcX(1,t,x,v_2) \right | 
& \geq & \left | v_1 - v_2 \right | (t-1) - 2\int_1^t \int_s^t \Vert E(\tau) \Vert_\infty \ d\tau ds\\
& \geq & \left | v_1 - v_2 \right | (t-1) - C \ln(1+t)
\end{eqnarray*}
for $t \geq 1$.
Rearranging the inequality produces
$$|v_1 - v_2| \leq C(t-1)^{-1}(1 + \ln(1+t)) \leq Ct^{-1}(1+\ln(t))$$
for $t \geq 2$.
This estimate shows that for any $t \geq 2$ and $x \in \bfR^3$ there are $v_0 \in \mathbb{R}^3$ and $C  > 0$ such that
$$\left \{ v : f(t,x,v) \neq 0 \right \} \subseteq \left \{ v \in \mathbb{R}^3 : | v - v_0| \leq Ct^{-1}(1+\ln(t)) \right \},$$
and thus
$$\| \rho(t)\|_\infty  =  \sup_{x \in \mathbb{R}^3}\int  f(t,x,v) \ dv \leq \Vert f_0 \Vert_\infty  \sup_{x \in \mathbb{R}^3} \biggl | \left \{ v : f(t,x,v) \neq 0 \right \}  \biggr | \leq Ct^{-3}(1+\ln(t))^3$$
for $t \geq 2$. Coupling this with the boundedness of $\| \rho(t) \|_\infty$ on bounded time intervals then yields
\begin{equation}
\label{rhodecay}
\| \rho(t)\|_\infty  \leq C(1+t)^{-3}(1 + \ln(1+t))^3
\end{equation}
for $t \geq 0$.
As shown in \cite{Horst}, this estimate holds for both \eqref{VP} and \eqref{RVP}.

With this result, the stated decay rate is obtained for \eqref{VP} merely by applying Theorem 1.1 of \cite{Jacknew}. 
Indeed, as shown there, the decay estimate $\|\rho(t) \|_\infty \leq C(1+t)^{-a}$ for $t \geq 0$ and some $a \in (2,3)$ implies $\| \rho(t)\|_\infty \leq C(1+t)^{-3}$ for $t \geq 0$.
While this has not been shown previously for \eqref{RVP}, similar ideas apply, and we will prove the result for this system.


As the estimate \eqref{rhodecay} holds for solutions of \eqref{RVP}, it only remains to remove the logarithmic factor, which is completed as follows.
As shown in \cite{Horst}, the decay of $\|\rho(t)\|_\infty$ implies the same rate of decay for the derivative of the field. In particular, there is $C > 0$ such that
$$\| \nabla_x E(t) \|_\infty \leq C\|\rho(t)\|_\infty \leq C(1+t)^{-3}(1+\ln(1+t))^3$$
for $t\geq 0$. This estimate and Lemma \ref{L2} further imply there is $\eta > 0$ sufficiently small such that
\begin{equation}
\label{DEdecay}
\| \nabla_x E(t) \|_\infty \leq \eta t^{-5/2}
\end{equation}
and
\begin{equation}
\label{Edecay2}
\|E(t) \|_\infty \leq \eta t^{-3/2}
\end{equation}
for all $t \geq T$ and $T$ sufficiently large.
Additionally, we may take $\eta$ as small as desired by choosing $T$ as large as necessary.

Now, because derivatives of the field decay rapidly, the main idea of the small data theorem of \cite{BD} applies. 
In order to utilize these ideas for solutions of \eqref{RVP}, however, we must estimate derivatives of backwards characteristics, and this will be accomplished using Cartesian coordinates.
Let $g : \bfR^3 \to \bfR^3$ be defined by
$$g(v) = \hat{v} = \frac{v}{\sqrt{1 + |v|^2}}$$
and note that for all $v \in \bfR^3$
$$\max_{i=1,2, 3} |\partial_{v_i} g (v) | \leq 1 \qquad \mathrm{and} \qquad \max_{i,j=1, 2, 3}  |\partial_{v_i v_j} g (v) | \leq 1.$$
Then, letting $h(v) = \nabla g(v)$, we find from the characteristic equations and initial conditions
\begin{equation*}
\left \{
\begin{aligned}
& \frac{\partial \dot{\mcX}}{\partial v}(t)= h\left (\mcV(t) \right ) \frac{\partial \mcV}{\partial v}(t),\\
& \frac{\partial \dot{\mcV}}{\partial v}(t)= \nabla_x E(t, \mcX(t)) \frac{\partial \mcX}{\partial v}(t),\\
& \frac{\partial \mcX}{\partial v}(\tau) = 0 ,\qquad \frac{\partial \mcV}{\partial v}(\tau) = 1
\end{aligned}
\right.
\end{equation*}
where $\mcX(t) = \mcX(t, \tau, x, v)$ and similarly for $\mcV(t)$.
Letting 
$$P(t) = \frac{\partial \mcX}{\partial v}(t) + (\tau - t) h(v) \qquad \mathrm{and} \qquad Q(t) = \frac{\partial \mcV}{\partial v}(t) - \mathbb{I},$$
this becomes
\begin{equation*}
\left \{
\begin{aligned}
& \dot{P}(t) = h\left (\mcV(t) \right ) - h(v) + h\left (\mcV(t) \right ) Q(t),\\
& \dot{Q}(t) = \nabla_x E(t, \mcX(t)) \left ( P(t) - (\tau - t) h(v) \right ),\\
\end{aligned}
\right.
\end{equation*}
with $P(\tau) = \dot{P}(\tau) = Q(\tau) = 0.$

Now, consider $T \leq t \leq \tau$ with $T$ sufficiently large. Using the properties of derivatives of $g$, as well as, \eqref{DEdecay} and \eqref{Edecay2}, we find
\begin{eqnarray*}
\left | \dot{P}(t) \right | & \leq & \max_{i,j=1, 2, 3} \|\partial_{v_i v_j} g\|_\infty \left (\left | \mcV(t) - v \right | + |Q(t)| \right )\\
& \leq & \int_t^\tau \left |E(s, \mcX(s)) \right | \ ds + |Q(t)|\\
& \leq & 2\eta t^{-1/2} + |Q(t)|
\end{eqnarray*}
and
$$\left | \dot{Q}(t) \right | \leq \eta t^{-5/2} \left ( |P(t)| + \tau - t \right ).$$
Hence, for $T \leq t \leq \tau$ we integrate and use the initial conditions to find
\begin{eqnarray*}
\left | P(t) \right | & \leq & \int_t^\tau |\dot{P}(s)| \ ds \\
& \leq & 4\eta \left ( \sqrt{1+\tau} - \sqrt{1+t} \right ) + \int_t^\tau |Q(s)| \ ds\\
& \leq & 2\eta (\tau -t) + \eta \int_t^\tau \int_s^\tau u^{-5/2} \left (|P(u)| + \tau - u \right)  \ du ds.
\end{eqnarray*}
Next, define
$$T_0 = \inf \left \{ t \in [T, \tau] : |P(s)| \leq 5\eta(\tau - s) \ \mathrm{for \ all} \ s \in [t,\tau] \right \}$$
and note that $T_0 < \tau$ due to the initial conditions.
Estimating for $t \in [T_0, \tau]$ and integrating by parts twice, we find
\begin{eqnarray*}
\left | P(t) \right | & \leq & 2\eta (\tau -t) + \eta(1+ 5\eta) \int_t^\tau \int_s^\tau u^{-5/2} (\tau - u)  \ du ds\\
& \leq & \left ( 2\eta + \frac{4}{3}\eta (1+ 5\eta)\right )(\tau - t)\\
& \leq & 4\eta (\tau - t)
\end{eqnarray*}
for $\eta$ sufficiently small. Hence, we find $T_0 = T$ and for all $T \leq t \leq \tau$ we have
$$ |P(t)| \leq 5\eta(\tau - t).$$
This then implies
$$ \left | \frac{\partial \mcX}{\partial v}(t, \tau, x, v) + (\tau - t) h(v) \right| \leq 5\eta(\tau - t)$$
on the same time interval.
Therefore, we find
\begin{eqnarray*}
\left |\det \left ( \frac{\partial \mcX}{\partial v}(t, \tau, x, v) \right ) \right | & = & \left | \det \biggl (P(t) -  (\tau - t) h(v) \biggr ) \right |\\
& = & (\tau - t)^3 \left | \det \left ( \frac{P(t)}{\tau - t} -  h(v) \right) \right |
\end{eqnarray*}
for $T \leq t \leq \tau$.
Letting $\eta \to 0$, we see that 
$$\left | \det \left ( \frac{P(t)}{\tau - t} -  h(v) \right) \right | \to \det (h(v)) = \sqrt{1 + |v|^2}.$$
Thus, due to the continuity of the map $A \mapsto \det(A)$, we can take $\eta$ sufficiently small so that
$$\left | \det \left ( \frac{\partial \mcX}{\partial v}(t, \tau, x, v) \right ) \right | \geq \frac{1}{2} (\tau - t)^3 \sqrt{1 + |v|^2} \geq  \frac{1}{2} (\tau - t)^3$$
for $T \leq t \leq \tau$ and $T$ sufficiently large.

With this estimate, we can finally decompose the charge density using the change of variables
$$v \mapsto \mcX(T, t, x, v) =: y$$
so that for $T$ fixed as above and $t\geq 2T$, we have
\begin{eqnarray*}
\rho(t,x) & = & \int f(t,x,v) \ dv\\
& = & \int f(T, \mcX(T,t,x,v), \mcV(T,t,x,v)) \ dv\\
& \leq &  \left ( \frac{1}{2} (t-T)^3 \right )^{-1} \int\limits_{|\mcX(T)|  \leq C} f(T, \mcX(T,t,x,v), \mcV(T,t,x,v)) \left | \det  \left ( \frac{\partial \mcX}{\partial v}(T,t,x,v) \right ) \right | \ dv\\
& \leq & \left ( \frac{1}{2} (t-T)^3 \right )^{-1} \int\limits_{|y| \leq C} \sup_{u \in \bfR^3}  f(T, y, u) \ dy\\
& \leq & C\|f_0\|_\infty (t-T)^{-3}\\
& \leq & Ct^{-3}.
\end{eqnarray*}
Hence, $\| \rho(t) \|_\infty \leq Ct^{-3}$  for $t$ sufficiently large, and the stated upper bound follows.

To establish the lower bound, we merely use the enclosed mass to find for any $t \geq 0$
$$\mcM = m(t, \mfR(t)) = 4\pi^2 \int_0^{\mfR(t)} r^2 \rho(t,r) \ dr
\leq C \Vert \rho(t) \Vert_\infty \mfR(t)^3.$$
Rearranging this inequality then yields the result.
\end{proof}


\section{Momentum Limits and Asymptotic Behavior of Characteristics}

Because the field decays rapidly in time, we can also establish the limiting behavior of the momentum characteristics. Furthermore, an asymptotic approximation for the behavior of spatial characteristics follows immediately.

\begin{lemma}
\label{L6}
Let $\mcU$ be a compact subset of $[0,\infty) \times \bfR \times [0,\infty)$. For any $\tau \geq 0$ and $(r,w,\ell) \in \mcU$, the limiting momentum $\mcW_\infty$ defined by
$$\mcW_\infty(\tau, r, w, \ell) :=  \lim_{t \to \infty} \mcW(t, \tau, r, w, \ell)$$ 
exists, and is bounded, continuous, nonnegative, and invariant under the characteristic flow, namely $\mcW_\infty$ satisfies
\begin{equation}
\label{Winfinv}
\mcW_\infty(t, \mcR(t, \tau, r, w, \ell), \mcW(t, \tau, r, w, \ell), \ell) = \mcW_\infty(\tau, r, w, \ell)
\end{equation}
for any $t \geq 0$.
Additionally, we have the convergence estimates
\begin{equation}
\label{Winfest}
|\mcW(t, \tau, r, w, \ell) - \mcW_\infty(\tau, r, w, \ell) | \leq C(1+t)^{-1}
\end{equation}
for $t \geq 0$
and
$$\left |\mcW_\infty(\tau, r, w, \ell) - w \right | \leq C(1+\tau)^{-1}$$
for $\tau \geq 0$.
Moreover, using the notation $\mcR(s) = \mcR(s, \tau, r, w, \ell)$ and similarly for $\mcW(s)$ we can express this limiting function explicitly. 
For $\ell > 0$ it is
$$\mcW_\infty(\tau, r, w, \ell) = \sqrt{w^2 + \ell r^{-2}} + \int_\tau^\infty \frac{m(s, \mcR(s))}{\mcR(s)^2} \frac{\mcW(s)}{\sqrt{\mcW(s)^2 + \ell \mcR(s)^{-2}}} ds,$$
for \eqref{VP}, while for \eqref{RVP} it satisfies
\begin{align*}
\sqrt{1 + \mcW_\infty(\tau, r, w, \ell)^2} &= \sqrt{1 + w^2 + \ell r^{-2}}\\
& \quad + \int_\tau^\infty \frac{m(s, \mcR(s))}{\mcR(s)^2} \frac{\mcW(s)}{\sqrt{1 + \mcW(s)^2 + \ell \mcR(s)^{-2}}} ds,
\end{align*}
for every $\tau \geq 0$ and $(r, w, \ell) \in \mcU$.
For $\ell = 0$, the limiting function is
$$\mcW_\infty(\tau, r, w, \ell) = w + \int_\tau^\infty \frac{m(s, \mcR(s))}{\mcR(s)^2} ds$$
for either system.
\end{lemma}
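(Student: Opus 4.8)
The engine is the observation that the kinetic part of the energy is \emph{almost} conserved along characteristics. For \eqref{VP}, set $A(s) := \sqrt{\mcW(s)^2 + \ell\,\mcR(s)^{-2}}$ with $\mcR(s) = \mcR(s,\tau,r,w,\ell)$, $\mcW(s) = \mcW(s,\tau,r,w,\ell)$; by \eqref{V2},
\begin{equation*}
A'(s) = \frac{m(s,\mcR(s))}{\mcR(s)^2}\,\frac{\mcW(s)}{A(s)}, \qquad\text{so}\qquad \bigl|A'(s)\bigr| \le \frac{m(s,\mcR(s))}{\mcR(s)^2} = \bigl|E(s,\mcR(s))\bigr| \le \|E(s)\|_\infty,
\end{equation*}
since $|\mcW(s)| \le A(s)$. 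For \eqref{RVP} the same holds with $B(s) := \sqrt{1 + \mcW(s)^2 + \ell\,\mcR(s)^{-2}}$ in place of $A(s)$, using \eqref{V2rel}. By Lemma \ref{L2}, $\|E(s)\|_\infty \le C(1+s)^{-2}$ is integrable on $[0,\infty)$, so $A$ (resp.\ $B$) is Cauchy as $s\to\infty$, converges to a limit $A_\infty$ (resp.\ $B_\infty$), and obeys $|A(t) - A_\infty| \le \int_t^\infty|A'(s)|\,ds \le C(1+t)^{-1}$, the constant being uniform over $(r,w,\ell) \in \mcU$.

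Next I transfer this to $\mcW$ itself. Fix $\ell > 0$. Lemma \ref{L1}(1) gives $\mcR(s)^2 \ge C\ell s^2 \to \infty$, hence $\ell\,\mcR(s)^{-2}\to 0$ and $\mcW(s)^2 = A(s)^2 - \ell\,\mcR(s)^{-2} \to A_\infty^2$ (resp.\ $B_\infty^2-1$); since $\mcW(s)>0$ for all large $s$ by Lemma \ref{L1}(2), $\mcW(s)$ converges to the \emph{nonnegative} root, which I call $\mcW_\infty(\tau,r,w,\ell)$. This settles existence, boundedness (as $A$ is bounded), and nonnegativity simultaneously. For the rate, once $\mcW(t)>0$ I split
\begin{equation*}
\mcW_\infty - \mcW(t) = \bigl(A_\infty - A(t)\bigr) + \bigl(A(t) - \mcW(t)\bigr), \qquad A(t) - \mcW(t) = \frac{\ell\,\mcR(t)^{-2}}{A(t) + \mcW(t)} \le \frac{\sqrt{\ell}}{\mcR(t)} \le \frac{C}{t},
\end{equation*}
where the last step uses $A(t) \ge \sqrt{\ell}\,\mcR(t)^{-1}$ and Lemma \ref{L1}(1); note that the $\sqrt{\ell}$ cancels, so the bound is uniform over $\mcU$. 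Together with the tail bound on $A$ this gives \eqref{Winfest}, the bounded initial interval (before turnaround, where $\mcW(t)$ may still be negative) being absorbed into $C$. For $\ell = 0$ the argument simplifies: $\dot{\mcW}(s) = m(s,\mcR(s))\mcR(s)^{-2} \ge 0$ (with the obvious relativistic analogue), so $\mcW$ is monotone with an integrable derivative, hence converges, and $|\mcW_\infty - \mcW(t)| = \int_t^\infty\dot{\mcW}(s)\,ds \le C(1+t)^{-1}$ directly. The second estimate, $|\mcW_\infty(\tau,r,w,\ell) - w| \le C(1+\tau)^{-1}$, is just \eqref{Winfest} evaluated at $t = \tau$, since $\mcW(\tau,\tau,r,w,\ell) = w$ by \eqref{charanginit}.

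The explicit representations drop out by integrating the defining ODE over $[\tau,\infty)$: for \eqref{VP} and $\ell>0$, $\mcW_\infty = A_\infty = A(\tau) + \int_\tau^\infty A'(s)\,ds$ with $A(\tau) = \sqrt{w^2 + \ell r^{-2}}$ is precisely the stated formula, and for \eqref{RVP}, $\sqrt{1+\mcW_\infty^2} = B_\infty = B(\tau) + \int_\tau^\infty B'(s)\,ds$ with $B(\tau) = \sqrt{1 + w^2 + \ell r^{-2}}$ gives the relativistic one; the $\ell=0$ formula comes from integrating $\dot{\mcW} = m\,\mcR^{-2}$ directly. Invariance \eqref{Winfinv} is a pure semigroup statement: by uniqueness of solutions to \eqref{charang}/\eqref{charangrel}, $\mcW(s,t,\mcR(t,\tau,r,w,\ell),\mcW(t,\tau,r,w,\ell),\ell) = \mcW(s,\tau,r,w,\ell)$ for all $s$, and sending $s\to\infty$ yields the identity. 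Finally, continuity of $\mcW_\infty$ on $\mcU$ follows since each $(\tau,r,w,\ell)\mapsto\mcW(t,\tau,r,w,\ell)$ is continuous (indeed $C^1$, as $E\in C^1$ by the global existence theory) and the convergence $\mcW(t,\cdot)\to\mcW_\infty$ is uniform on the compact set $\mcU$ by \eqref{Winfest}; a uniform limit of continuous functions is continuous.

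I expect the main obstacle to be the transfer step of the second paragraph: the ``energy'' $A$ (or $B$) converges for free from the mere integrability of $\|E(s)\|_\infty$, but recovering convergence of $\mcW$ itself \emph{with the sharp $O((1+t)^{-1})$ rate, uniformly over $\mcU$}, leans entirely on the quadratic lower bound $\mcR(t)^2 \gtrsim \ell t^2$ of Lemma \ref{L1} and on the cancellation of $\sqrt{\ell}$ it produces. The degenerate regime $\ell\to 0$, where that bound yields nothing, genuinely requires the separate monotonicity argument above, and care is needed to keep all constants uniform as $(r,w,\ell)$ ranges over the (possibly degenerate) set $\mcU$.
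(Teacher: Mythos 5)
Most of your proposal coincides with the paper's own argument: the almost-conserved quantity $A(s)=\sqrt{\mcW(s)^2+\ell\mcR(s)^{-2}}$ (the paper's $\mcB$), the splitting $|\mcW-\mcW_\infty|\le|\mcW-A|+|A-A_\infty|$ with $|\mcW-A|\le\sqrt{\ell}\,\mcR^{-1}$, the separate monotonicity argument for $\ell=0$, the semigroup identity for \eqref{Winfinv}, and continuity via uniform convergence are all exactly the paper's steps, and they are correct.

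The gap is in the final estimate. You claim $|\mcW_\infty(\tau,r,w,\ell)-w|\le C(1+\tau)^{-1}$ is ``just \eqref{Winfest} evaluated at $t=\tau$,'' but that requires the constant in \eqref{Winfest} to be uniform in $\tau$ all the way down to $t=\tau$, and your derivation does not provide this. The step $\sqrt{\ell}/\mcR(t)\le C/t$ rests on Lemma~\ref{L1}(1), which is stated and proved for characteristics launched at $\tau=0$; for a characteristic launched at time $\tau$ from $(r,w,\ell)\in\mcU$ the convexity argument only gives $\mcR(t)^2\gtrsim \ell\,(t-\tau)^2$, which degenerates as $t\downarrow\tau$, where $\mcR(\tau)=r$ is merely a bounded point of the compact set $\mcU$. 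In fact, under the literal reading the claim is false: your own explicit formula shows $\mcW_\infty(\tau,r,w,\ell)\to\sqrt{w^2+\ell r^{-2}}\neq w$ as $\tau\to\infty$ for fixed $(r,w,\ell)$ with $\ell>0$, so $|\mcW_\infty(\tau,r,w,\ell)-w|$ does not decay. The estimate is really meant for triples $(r,w,\ell)$ that represent a particle at time $\tau$, i.e.\ images under the flow of points in a fixed compact set at time $0$. For such points Lemma~\ref{L1} yields $r=\mcR(\tau,0,\tilde r,\tilde w,\tilde\ell)\ge C\ell^{1/2}\tau$ and $w>0$ for $\tau$ large, whence $\sqrt{w^2+\ell r^{-2}}-w\le \ell^{1/2}r^{-1}\le C\tau^{-1}$, and the tail integral contributes $C(1+\tau)^{-1}$. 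This is the separate argument the paper supplies for the $\tau$-estimate, and it cannot be collapsed into \eqref{Winfest}; your write-up needs this additional step (and the accompanying reinterpretation of which $(r,w,\ell)$ are admissible) to close the proof.
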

\begin{proof}
To begin, we consider particles with no angular momentum. So, let $\ell = 0$ and notice that due to \eqref{charang} and  \eqref{charangrel} such characteristics satisfy
$$\mcW(t,\tau, r, w, 0) = w + \int_\tau^t \frac{m(s, \mcR(s))}{\mcR(s)^2} ds.$$
Thus, define
$$\mcW_\infty(\tau, r, w, 0) = w+ \int_\tau^\infty \frac{m(s, \mcR(s))}{\mcR(s)^2}  ds$$
for every $\tau \geq 0$.
From Lemmas \ref{L2} and \ref{L3}, we find 
$$|\mcW_\infty(\tau)| \leq |w| + \int_\tau^\infty \Vert E(s) \Vert_\infty \ ds \leq C + C(1+ \tau)^{-1} \leq C.$$
Additionally, the convergence estimate
$$| \mcW(t) -\mcW_\infty| =\int_t^\infty \frac{m(s, \mcR(s))}{\mcR(s)^2}  \ ds \leq C(1+t)^{-1}$$ 
holds due to the field decay of Lemma \ref{L2}.
Of course, $\mcW_\infty$ is continuous due to the uniformity of the limit as $t \to \infty$ and the continuity of $\mcW(t)$.
Finally, the convergence estimate in $\tau$ follows by definition as
$$\left |\mcW_\infty(\tau) - w \right |  = \int_\tau^\infty \frac{m(s, \mcR(s))}{\mcR(s)^2} ds \leq C(1+\tau)^{-1}.$$

Next, consider $\ell > 0$. We prove the stated properties for \eqref{VP}, and comment that the analogous results for \eqref{RVP} merely follow by using the rest speed for a relativistic particle, namely
$$ \mcB(t,\tau,r, w, \ell) = \sqrt{1 + \mcW(t,\tau, r, w, \ell)^2 + \ell \mcR(t,\tau, r, w, \ell)^{-2}}$$
and \eqref{V2rel} in the following argument.
First, we define for every $\tau, t \geq 0$, $(r, w, \ell) \in \mcU$ with $\ell > 0$, the classical particle speed
$$ \mcB(t,\tau,r, w, \ell) = \sqrt{\mcW(t,\tau, r, w, \ell)^2 + \ell \mcR(t,\tau, r, w, \ell)^{-2}}$$
so that using \eqref{V2} we can write this function as
$$\mcB(t) = \sqrt{w^2 + \ell r^{-2}} + \int_\tau^t \frac{m(s, \mcR(s))}{\mcR(s)^2} \frac{\mcW(s)}{\sqrt{\mcW(s)^2 + \ell \mcR(s)^{-2}}}  \ ds.$$
Further define
$$\mcB_\infty(\tau, r, w, \ell) = \sqrt{w^2 + \ell r^{-2}} + \int_\tau^\infty \frac{m(s, \mcR(s,\tau, r, w, \ell))}{\mcR(s,\tau, r, w, \ell)^2}  \frac{\mcW(s)}{\sqrt{\mcW(s)^2 + \ell \mcR(s)^{-2}}}  ds$$
which, due to Lemmas \ref{L2} and \ref{L3}, satisfies
$$|\mcB_\infty(\tau)| \leq \sqrt{w^2 + \ell r^{-2}} + \int_\tau^\infty \Vert E(s) \Vert_\infty \ ds \leq C + C(1+ \tau)^{-1} \leq C$$
for every $\tau \geq 0$.
Similarly, we find
\begin{equation}
\label{Bconv}
\vert \mcB(t) - \mcB_\infty \vert \leq \int_t^\infty \frac{m(s, \mcR(s))}{\mcR(s)^2}  \ ds \leq C(1+t)^{-1}
\end{equation}
for $t \geq 0$ and therefore,
$$\lim_{t \to \infty} \mcB(t,\tau, r, w, \ell) = \mcB_\infty(\tau, r, w, \ell).$$
Furthermore, $\mcB_\infty(\tau, r, w, \ell)$ is continuous due to the uniformity of the limit as $t \to \infty$ and the continuity of $\mcB(t, \tau, r, w, \ell)$.

Using Lemma \ref{L1} we find
$$ \ell \mcR(t, \tau, r, w, \ell)^{-2} \leq Ct^{-2}$$
for $t > \tau \geq 0$ and $(r, w, \ell) \in \mcU$.
This directly implies
$$\lim_{t\to\infty}  \ell \mcR(t, \tau, r, w, \ell)^{-2} = 0$$
and, as Lemma \ref{L1} implies $\mcW(t) > 0$ for $t$ sufficiently large, further allows us to define
$$\mcW_\infty(\tau, r, w, \ell) :=  \lim_{t \to \infty} \mcW(t, \tau, r, w, \ell) = \lim_{t \to \infty} \mcB(t, \tau, r, w, \ell) = \mcB_\infty(\tau, r, w, \ell).$$
Additionally, for $t$ sufficiently large we have the estimate
\begin{equation}
\label{WBconv}
|\mcW(t) - \mcB(t) | = \left | \frac{\mcW(t)^2 - \mcB(t)^2}{\mcW(t) + \mcB(t)} \right | = \frac{\ell \mcR(t)^{-2}}{\mcW(t) + \mcB(t)} \leq \frac{\ell \mcR(t)^{-2}}{\ell^{1/2} \mcR(t)^{-1}} \leq Ct^{-1}
\end{equation}
due to Lemma \ref{L1}.
With this, the convergence estimate \eqref{Winfest} follows from \eqref{Bconv} and \eqref{WBconv} by using
$$|\mcW(t) - \mcW_\infty|  = |\mcW(t) - \mcB_\infty| \leq |\mcW(t) - \mcB(t)| +  |\mcB(t) - \mcB_\infty|  \leq C(1+t)^{-1}.$$

The convergence estimate in $\tau$ follows from the formulas obtained for the limiting momenta. 
Indeed, for any $\tau > 0$ and any triple $(r, w, \ell) \in \mcU$ representing a particle at time $\tau$, there is a compact $\mcU_0 \subset [0,\infty)\times \bfR \times[0,\infty)$ and $(\tilde{r}, \tilde{w}, \tilde{\ell}) \in \mcU_0$ with $\ell = \tilde{\ell}$ such that
$$r = \mcR(\tau, 0, \tilde{r},\tilde{w}, \tilde{\ell}) \qquad
\mathrm{and} \qquad w = \mcW(\tau, 0, \tilde{r},\tilde{w}, \tilde{\ell}).$$
Hence, by Lemma \ref{L1} it follows that
$$r = \mcR(\tau, 0, \tilde{r},\tilde{w}, \tilde{\ell}) \geq C\tilde{\ell}^{1/2}\tau = C\ell^{1/2}\tau$$
and
$$w = \mcW(\tau, 0, \tilde{r},\tilde{w}, \tilde{\ell}) > 0$$
by taking $\tau$ sufficiently large.
With this, we find 
\begin{eqnarray*}
\left |\mcW_\infty(\tau, r, w, \ell) - w \right | & \leq & \sqrt{w^2 + \ell r^{-2}} - w  + \int_\tau^\infty \frac{m(s, \mcR(s))}{\mcR(s)^2} ds\\
& \leq & \frac{\ell r^{-2}}{\sqrt{w^2 + \ell r^{-2}} + w} + C(1 + \tau)^{-1}\\
& \leq & C\ell^{1/2} r^{-1} + C(1 + \tau)^{-1}\\
& \leq & C(1 + \tau)^{-1}
\end{eqnarray*}
for $\tau$ sufficiently large, 
and this estimate is extended to all $\tau \geq 0$ as $\mcW_\infty$ and $w$ are bounded for $\tau \geq 0$.


Finally, we show that the limiting momentum of a particle is invariant under the characteristic flow. In particular, for every $\tau, t \geq 0$ and $(r, w, \ell) \in \mcU$, we use the notation $\mcR(t) = \mcR(t, \tau, r, w, \ell)$ and $\mcW(t) = \mcW(t, \tau, r, w, \ell)$
and note that
$$\mcW(s, t, \mcR(t), \mcW(t), \ell) = \mcW(s, \tau, r, w, \ell) = \mcW(s)$$ 
and similarly for $\mcR(s)$.
Using this identity, we find
\begin{eqnarray*}
\mcW_\infty(t, \mcR(t), \mcW(t),\ell) & = & \sqrt{\mcW(t)^2 + \ell \mcR(t)^{-2}} + \int_t^\infty \frac{m(s, \mcR(s))}{\mcR(s)^2} \frac{\mcW(s)}{\sqrt{\mcW(s)^2 + \ell \mcR(s)^{-2}}} ds\\
& = & \sqrt{\mcW(t)^2 + \ell \mcR(t)^{-2}} + \int_t^\tau \frac{m(s, \mcR(s))}{\mcR(s)^2} \frac{\mcW(s)}{\sqrt{\mcW(s)^2 + \ell \mcR(s)^{-2}}} ds\\
& \ & \ + \int_\tau^\infty \frac{m(s, \mcR(s))}{\mcR(s)^2} \frac{\mcW(s)}{\sqrt{\mcW(s)^2 + \ell \mcR(s)^{-2}}} ds\\
& = & \sqrt{\mcW(\tau)^2 + \ell \mcR(\tau)^{-2}} + \int_\tau^\infty \frac{m(s, \mcR(s))}{\mcR(s)^2} \frac{\mcW(s)}{\sqrt{\mcW(s)^2 + \ell \mcR(s)^{-2}}} ds\\
& = & \sqrt{w^2 + \ell r^{-2}} + \int_\tau^\infty \frac{m(s, \mcR(s))}{\mcR(s)^2} \frac{\mcW(s)}{\sqrt{\mcW(s)^2 + \ell \mcR(s)^{-2}}} ds\\
& = & \mcW_\infty(\tau, r, w, \ell).
\end{eqnarray*}
A similar calculation holds for limiting characteristics with $\ell = 0$, and the proof is complete.
\end{proof}

With the limiting momenta established, we can precisely determine the asymptotic behavior of the spatial characteristics, as well.

\begin{lemma}
\label{L8}
Let $\mcU$ be a compact subset of $[0,\infty) \times \bfR \times [0,\infty)$. For any $\tau \geq 0$ and $(r, w, \ell) \in \mcU$, we find 
$$\lim_{t \to \infty} \frac{\mcR(t,\tau,r, w, \ell) - r}{t - \tau} = \mcW_\infty(\tau, r, w, \ell)$$
for solutions of \eqref{VP}.
In particular, we have for $t \geq \tau \geq 0$ 
$$\mcR(t, \tau, r, w, \ell) =  r + \mcW_\infty(\tau, r, w, \ell) (t-\tau) + \mathcal{O}\left (\ln \left (\frac{1+t}{1 + \tau} \right) \right). $$
For solutions of \eqref{RVP}, we instead have
$$\lim_{t \to \infty} \frac{\mcR(t,\tau,r, w, \ell) - r}{t - \tau} = \widehat{\mcW}_\infty(\tau, r, w, \ell)$$
for any $\tau \geq 0$ and $(r, w, \ell) \in \mcU$, and
$$\mcR(t, \tau, r, w, \ell) =  r + \widehat{\mcW}_\infty(\tau, r, w, \ell) (t-\tau) + \mathcal{O}\left (\ln \left (\frac{1+t}{1 + \tau}\right) \right) $$
for $t \geq \tau \geq 0$,
where
$$ \widehat{\mcW}_\infty(\tau) = \frac{ \mcW_\infty(\tau)}{\sqrt{1 +  \mcW_\infty(\tau)^2}}.$$

\end{lemma}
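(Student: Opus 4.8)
The plan is to integrate the first characteristic equation and compare the velocity along the characteristic to its limiting value, controlling the discrepancy with the rate estimate \eqref{Winfest} of Lemma~\ref{L6}. Fix $\tau \ge 0$ and $(r,w,\ell)\in\mcU$, and abbreviate $\mcR(s) = \mcR(s,\tau,r,w,\ell)$, $\mcW(s) = \mcW(s,\tau,r,w,\ell)$, and $\mcW_\infty = \mcW_\infty(\tau,r,w,\ell)$. For solutions of \eqref{VP} one has $\dot{\mcR}(s) = \mcW(s)$ with $\mcR(\tau) = r$, so
$$\mcR(t) - r = \int_\tau^t \mcW(s)\,ds = \mcW_\infty\,(t-\tau) + \int_\tau^t \left(\mcW(s) - \mcW_\infty\right) ds .$$
By \eqref{Winfest} the integrand in the last term is bounded by $C(1+s)^{-1}$ with $C$ depending only on $\mcU$ and $\tau$, so that term is $\mathcal{O}\left(\int_\tau^t (1+s)^{-1}\,ds\right) = \mathcal{O}\left(\ln\left(\frac{1+t}{1+\tau}\right)\right)$, which is the claimed expansion for \eqref{VP}. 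Dividing by $t-\tau$ and letting $t\to\infty$ with $\tau$ fixed, the error contributes $\mathcal{O}\left((t-\tau)^{-1}\ln\left(\frac{1+t}{1+\tau}\right)\right)\to 0$, giving the stated limit.

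For solutions of \eqref{RVP} the only additional ingredient is an estimate for the relativistic velocity $\widehat{\mcW}(s) = \mcW(s)/\mcB(s)$, where $\mcB(s) = \sqrt{1 + \mcW(s)^2 + \ell\,\mcR(s)^{-2}} \ge 1$ is the particle rest speed appearing in the proof of Lemma~\ref{L6}. I would show
$$\left|\,\widehat{\mcW}(s) - \widehat{\mcW}_\infty(\tau,r,w,\ell)\,\right| \le C(1+s)^{-1},$$
where $\widehat{\mcW}_\infty = \mcW_\infty/\sqrt{1+\mcW_\infty^2}$. Setting $\mcB_\infty := \sqrt{1+\mcW_\infty^2}$ and decomposing
$$\widehat{\mcW}(s) - \widehat{\mcW}_\infty = \frac{\mcW(s) - \mcW_\infty}{\mcB(s)} + \mcW_\infty\,\frac{\mcB_\infty - \mcB(s)}{\mcB(s)\,\mcB_\infty},$$
the estimate follows since $\mcB(s), \mcB_\infty \ge 1$, $\mcW_\infty$ is bounded on $\mcU$, $|\mcW(s)-\mcW_\infty|\le C(1+s)^{-1}$ by \eqref{Winfest}, and $|\mcB(s) - \mcB_\infty|\le C(1+s)^{-1}$ — the latter by integrating \eqref{V2rel} and invoking Lemma~\ref{L2}, exactly as in the proof of Lemma~\ref{L6}, using also $\ell\,\mcR(s)^{-2}\le Cs^{-2}$ from Lemma~\ref{L1} to identify $\mcB_\infty = \sqrt{1+\mcW_\infty^2}$ (the case $\ell = 0$ being handled identically, with $\mcB(s)=\sqrt{1+\mcW(s)^2}$). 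Integrating $\dot{\mcR}(s) = \widehat{\mcW}(s)$ from $\tau$ to $t$ then yields
$$\mcR(t) - r = \widehat{\mcW}_\infty(\tau,r,w,\ell)\,(t-\tau) + \mathcal{O}\left(\ln\left(\frac{1+t}{1+\tau}\right)\right),$$
and dividing by $t-\tau$ and letting $t\to\infty$ produces the remaining limit.

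Since all the quantitative inputs (Lemmas~\ref{L1},~\ref{L2}, and~\ref{L6}) hold with constants uniform over the compact set $\mcU$, I do not anticipate a genuine obstacle. The only points requiring mild care are the uniformity in $(r,w,\ell)\in\mcU$ of the constant $C$ in the velocity-convergence estimates and — since Lemma~\ref{L1} is stated with initial time $0$ — the fact that, for a fixed base time $\tau$, tracing a point of $\mcU$ back to time $0$ along the flow keeps it in a compact set, so that Lemma~\ref{L1} applies; both of these are already carried out in the proof of Lemma~\ref{L6}, so it suffices to refer to that argument.
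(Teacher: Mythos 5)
Your proposal is correct and follows essentially the same route as the paper: integrate $\dot{\mcR}$, subtract the limiting (relativistic) velocity, and control the error by $C(1+s)^{-1}$ using the convergence estimate \eqref{Winfest} together with $\ell\,\mcR(s)^{-2}\leq Cs^{-2}$ from Lemma~\ref{L1}. The only cosmetic difference is in the relativistic case, where the paper adds and subtracts $g(\mcW(s))=\mcW(s)/\sqrt{1+\mcW(s)^2}$ and uses the Lipschitz bound on $g$, while you split the difference quotient through $\mcW_\infty/\mcB(s)$ and the rate $|\mcB(s)-\mcB_\infty|\leq C(1+s)^{-1}$; both hinge on the same two inputs and yield the same bound \eqref{reldiff}.
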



\begin{proof}

We begin with solutions of \eqref{VP}. Using the convergence estimate in Lemma \ref{L6}, we have
\begin{eqnarray*}
\left | \frac{\mcR(t,\tau, r, w, \ell) - \left [r +  \mcW_\infty(\tau, r, w, \ell) (t-\tau)\right ]}{t- \tau} \right | & \leq & \frac{1}{t - \tau} \int_\tau^t \left | \mcW(s, \tau, r, w, \ell) -  \mcW_\infty(\tau, r, w, \ell) \right | \ ds\\
& \leq & \frac{C}{t - \tau} \int_\tau^t  (1+s)^{-1} ds\\
& = & \frac{C}{t - \tau}\ln\left ( \frac{1+t}{1+\tau} \right)
\end{eqnarray*}
for $0 \leq \tau < t$.
Taking $t \to \infty$ produces the limiting result and multiplying by $t - \tau$ yields the stated asymptotic estimate.
 
Next, we consider solutions of \eqref{RVP}. 
Let $g(x) = \frac{x}{\sqrt{1 + x^2}}$ so that $\widehat{\mcW}_\infty(\tau) = g(\mcW_\infty(\tau))$ and note that $|g(x)| \leq 1$ and 
$$| g'(x) | = (1 + x^2)^{-3/2} \leq 1.$$
Then, we first estimate the difference between the relativistic velocity and its stated limit.
In particular, we have for every $s, \tau \geq 0$
\begin{eqnarray*}
\left | \frac{\mcW(s)}{\sqrt{1 + \mcW(s)^2 + \ell \mcR(s)^{-2}}} - \widehat{\mcW}_\infty(\tau)  \right | & \leq  & \left |  \frac{\mcW(s)}{\sqrt{1 + \mcW(s)^2  + \ell \mcR(s)^{-2}}} - g(\mcW(s))\right | \\
& & + \left | g(\mcW(s)) - g(\mcW_\infty(\tau)) \right |\\
&=: & A + B.
\end{eqnarray*}

Of course, $A = 0$ for $\ell = 0$. For $\ell > 0$, we multiply by the conjugate and use Lemma \ref{L1}, to find 
\begin{eqnarray*}
A 
& = & \left | \frac{g(\mcW(s))}{\sqrt{1 + \mcW(s)^2 + \ell \mcR(s)^{-2}}} \left ( \sqrt{1 + \mcW(s)^2} - \sqrt{1 + \mcW(s)^2 + \ell \mcR(s)^{-2}} \right ) \right |\\ 
& =& \left | \frac{g(\mcW(s))}{\sqrt{1 + \mcW(s)^2 + \ell \mcR(s)^{-2}}} \right | \cdot \left | \frac{\ell \mcR(s)^{-2} }{\sqrt{1 + \mcW(s)^2} + \sqrt{1 + \mcW(s)^2 + \ell \mcR(s)^{-2}}}  \right |\\
& \leq & C\ell \mcR(s)^{-2}\\
& \leq & Cs^{-2}
\end{eqnarray*}
for $s > 0$, and as $A \leq 2 |\mcW(s)| \leq C$ on finite time intervals, we find
$$A \leq C(1+s)^{-2}$$
for all $s \geq 0$.
The estimate for $B$ is straightforward as the convergence result in Lemma \ref{L6} implies
$$B = \left | g\left (\mcW(s) \right)  - g \left (\mcW_\infty(\tau) \right )\right | \leq \| g' \|_\infty  | \mcW(s) - \mcW_\infty(\tau)| \leq C(1+s)^{-1}$$
for $s\geq 0$.
Combining these, we have for $s\geq 0$
\begin{equation}
\label{reldiff}
\left | \frac{\mcW(s)}{\sqrt{1 + \mcW(s)^2 + \ell \mcR(s)^{-2}}} - \widehat{\mcW}_\infty(\tau)  \right | \leq C(1+s)^{-1}.
\end{equation} 

Now, proceeding as in the non-relativistic case and using \eqref{reldiff} with the notation 
$\mcR(t) = \mcR(t, \tau, r, w, \ell)$ and $\mcW_\infty(\tau) = \mcW_\infty(\tau, r, w, \ell)$, we have
\begin{eqnarray*}
\left | \frac{\mcR(t) - \left [r +  \widehat{\mcW}_\infty(\tau) (t-\tau)\right ]}{t- \tau} \right | & \leq & \frac{1}{t - \tau} \int_\tau^t \left | \frac{\mcW(s)}{\sqrt{1 + \mcW(s)^2 + \ell \mcR(s)^{-2}}} - \widehat{\mcW}_\infty(\tau)  \right | \ ds\\
& \leq & \frac{C}{t - \tau} \int_\tau^t  (1+s)^{-1} ds\\
& = & \frac{C}{t - \tau}\ln\left ( \frac{1+t}{1+\tau} \right)
\end{eqnarray*}
for $0 \leq \tau < t$.
As before, taking $t \to \infty$ produces the limiting result and multiplying by $t - \tau$ yields the stated asymptotic estimate.
Notice further that for either system the nonnegativity of $\mcR(t)$ immediately implies that $\mcW_\infty$ must also be nonnegative, as otherwise, the values of $\mcR(t)$ will become negative for sufficiently large $t$.
\end{proof}

\section{Estimates on Derivatives of Characteristics}

In order to construct a limiting particle distribution in subsequent sections, we will first need to estimate derivatives of characteristics.
To establish the remaining results, with the exception of Lemma \ref{L7}, we will now assume \eqref{Condition}, which guarantees that the angular momentum of each particle is bounded away from zero. 
First, we note the improvements in the previous lemmas that result from this assumption.

\begin{lemma}
\label{LAEst}
Assume $\bS(0)$ satisfies \eqref{Condition}. Then, there is $C > 0$ such that for $\tau \geq 0$ with $(r,w,\ell) \in \bS(\tau)$ and $t\geq 0$
$$\mcR(t, \tau, r, w, \ell) \geq Ct.$$
Additionally, there is $C > 0$ such that for $\tau \geq 0$ and $t$ sufficiently large
$$\inf_{(r,w,\ell) \in \bS(\tau)}\mcW(t, \tau, r, w, \ell) \geq C > 0,$$
and this further implies
$$\mcW_\infty(\tau, r, w, \ell) \geq C > 0$$
for $\tau \geq 0$ and $(r,w,\ell) \in \bS(\tau)$.
\end{lemma}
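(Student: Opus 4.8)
The plan is to exploit condition \eqref{Condition} in each of the three claimed bounds, working from the convexity estimates of Lemma~\ref{L1} and the field decay of Lemma~\ref{L2}. First I would establish $\mcR(t,\tau,r,w,\ell)\geq Ct$. The content of Lemma~\ref{L1}(1) (and its proof) is that for a characteristic launched at time $0$ one has $\mcR(t,0,\tilde r,\tilde w,\tilde\ell)^2\geq C\tilde\ell\,t^2$; by the time-translation/group property of the characteristic flow and the fact that $\bS(\tau)$ is the forward image of $\bS(0)$, the same reasoning applies to a characteristic started at time $\tau$ with data $(r,w,\ell)\in\bS(\tau)$, giving $\mcR(t,\tau,r,w,\ell)^2\geq C\ell(t-\tau)^2$ (one re-runs the two-integration-of-convexity argument from \eqref{R2}, \eqref{Winc}, \eqref{V2}, exactly as in Lemma~\ref{L1}, since nothing there used $\tau=0$). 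Now \eqref{Condition} gives $\ell\geq C_0>0$ uniformly on $\bS(0)$, hence on $\bS(\tau)$ since $\ell$ is conserved along characteristics, so $\mcR(t,\tau,r,w,\ell)^2\geq CC_0(t-\tau)^2$. Combined with the always-available lower bound $\mcR\geq \Rm\geq Dr$ (or $\geq r$) from Lemma~\ref{L1}(3), which is bounded below by a positive constant when $\tau=0$ but needs a little care for general $\tau$, one patches the two regimes $t$ near $\tau$ versus $t$ large to conclude $\mcR(t,\tau,r,w,\ell)\geq Ct$ for all $t\geq 0$ with $C$ independent of the particle.

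Next I would prove the uniform lower bound on $\mcW(t,\tau,r,w,\ell)$ for large $t$. From \eqref{Winc} (resp.\ its relativistic analogue) $\mcW$ is increasing along characteristics, and by Lemma~\ref{L1}(2) it is eventually positive. To make the positivity uniform in $(r,w,\ell)\in\bS(\tau)$, use \eqref{Winc}: $\dot\mcW(s)\geq \ell\,\mcR(s)^{-3}\geq C_0(Cs)^{-3}$ for $s$ large, using the just-established $\mcR\geq Cs$ and $\ell\geq C_0$. Integrating from the (uniformly bounded, by Lemma~\ref{L1}(2)) time $T_1$ at which $\mcW$ turns positive up to time $t$, and using also that $\mcW$ cannot decrease, one gets a positive lower bound on $\mcW(t)$; but since $\int s^{-3}$ converges this alone is too weak, so I would instead argue via $\mcW_\infty$. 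The cleanest route: show $\mcW_\infty(\tau,r,w,\ell)\geq C>0$ first, then deduce the statement for $\mcW(t)$ at large $t$ from the convergence estimate \eqref{Winfest}. For the relativistic case work with $\mcW_\infty/\sqrt{1+\mcW_\infty^2}$ and invert monotonically at the end.

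So the heart of the matter is the lower bound $\mcW_\infty(\tau,r,w,\ell)\geq C>0$. Here I would use the explicit formula from Lemma~\ref{L6}: for $\ell>0$,
$$\mcW_\infty(\tau,r,w,\ell)=\sqrt{w^2+\ell r^{-2}}+\int_\tau^\infty \frac{m(s,\mcR(s))}{\mcR(s)^2}\frac{\mcW(s)}{\sqrt{\mcW(s)^2+\ell\mcR(s)^{-2}}}\,ds.$$
Since $\mcW_\infty\geq 0$ (Lemma~\ref{L8}) and the limiting momentum is invariant under the flow (Lemma~\ref{L6}), the integrand is eventually nonnegative; the issue is that the leading term $\sqrt{w^2+\ell r^{-2}}$ could in principle be made small if $w$ is very negative and $\ell r^{-2}$ small. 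The key observation that fixes this is Lemma~\ref{L1}'s identity $\ell=\Rm^2\Vm^2$ together with the fact that $\mcB(t)=\sqrt{\mcW(t)^2+\ell\mcR(t)^{-2}}$ is nonincreasing then nondecreasing with minimum value $\Vm=\sqrt{\ell}/\Rm$, and $\mcB(t)\to\mcW_\infty$. By Lemma~\ref{L3}, $\Rm\leq\mfR(0)\leq C$ is bounded above, so $\Vm=\sqrt{\ell}/\Rm\geq \sqrt{C_0}/C$; but $\mcW_\infty=\lim\mcB(t)\geq\min_t\mcB(t)=\Vm$ because $\mcB$ is nondecreasing after its minimum — hence $\mcW_\infty\geq\sqrt{C_0}/C>0$ uniformly. (For $\tau>0$ one reduces to $\tau=0$ via the flow-invariance of $\mcW_\infty$ and the conservation of $\ell$.) I expect this last step — getting the uniform-in-$(r,w,\ell)$ positivity of $\mcW_\infty$, and in particular locating where \eqref{Condition} and the a priori bound $\Rm\leq C$ combine — to be the main obstacle; once it is in hand, the bound on $\mcW(t)$ for large $t$ follows from \eqref{Winfest}, and the relativistic versions follow by applying the increasing map $x\mapsto x/\sqrt{1+x^2}$ and its inverse.
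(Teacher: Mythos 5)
Your argument for the second and third assertions is sound, and it is genuinely different from the paper's: you get the uniform positivity of $\mcW_\infty$ directly from the convexity structure of Lemma \ref{L1} (namely $\mcB(t)^2=\mcW(t)^2+\ell\mcR(t)^{-2}\ge \Vm^2=\ell/\Rm^2$ for all $t$, with $\Rm\le r\le\mfR(0)$ and $\ell\ge C_0$ from \eqref{Condition}, so $\mcW_\infty=\lim_t\mcB(t)\ge \sqrt{C_0}/\mfR(0)$, extended to all $\tau$ by flow invariance), and then recover the bound on $\mcW(t)$ from \eqref{Winfest}; the paper goes the other way, using \eqref{Condition} to bound $T_1\le C_1$ uniformly via Lemma \ref{L1}(2), integrating $\dot\mcW(s)\ge\ell\mcR(s)^{-3}$ over a fixed window past $T_1$, and deducing the $\mcW_\infty$ bound last. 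Incidentally, your reason for discarding that integration route is mistaken on two counts: the inequality $\ell\mcR(s)^{-3}\ge C_0(Cs)^{-3}$ does not follow from the lower bound $\mcR(s)\ge Cs$ (that inequality goes the wrong way; one needs the \emph{upper} bound $\mcR(s)\le C(1+s)$ from Lemma \ref{L3}), and the convergence of $\int s^{-3}\,ds$ is harmless, since with $T_1\le C_1$ uniform one only needs a positive contribution over a fixed time window, not a divergent integral. Since your $\Vm$-based alternative is correct, this is a mis-diagnosis rather than a gap in what you actually prove.

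The genuine gap is in the first assertion. Re-running the convexity argument of Lemma \ref{L1} from time $\tau$ does \emph{not} give $\mcR(t,\tau,r,w,\ell)^2\ge C\ell(t-\tau)^2$ with $C$ uniform in $\tau$: the constant produced there is $r^{-2}$ (the proof of Lemma \ref{L1} closes by using $r\le C$ on a fixed compact set), whereas on $\bS(\tau)$ one only has $r\le\mfR(\tau)\le C(1+\tau)$, so the re-run yields $\mcR(t,\tau)^2\ge \ell(t-\tau)^2/r^2$, which degrades like $(1+\tau)^{-2}$. Your patching then fails exactly where it is needed: for $t$ comparable to $\tau$ you must already know $r\ge C\tau$ (a fact that itself requires viewing the particle from time $0$), and for $t\gg\tau\gg1$ both available bounds, $\mcR\ge Dr$ with $D\ge C(1+\tau)^{-1}$ and the convexity bound $\mcR\gtrsim (t-\tau)/(1+\tau)$, only give $\mcR\gtrsim t/(1+\tau)$, not $\mcR\ge Ct$. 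The repair is the observation you state but do not exploit: since $\bS(\tau)$ is the forward image of $\bS(0)$ under the flow, $\mcR(t,\tau,r,w,\ell)=\mcR(t,0,\tilde r,\tilde w,\tilde\ell)$ for some $(\tilde r,\tilde w,\tilde\ell)\in\bS(0)$ with $\tilde\ell=\ell\ge C_0$, and Lemma \ref{L1} applied on the fixed compact set $\bS(0)$ gives $\mcR\ge C\sqrt{\tilde\ell}\,t\ge Ct$ for all $t\ge0$ with no patching and no $(t-\tau)$ loss; this is precisely the paper's one-line argument, and the same time-$0$ reduction is also what makes the constant in \eqref{Winfest} uniform over $(r,w,\ell)\in\bS(\tau)$ for all $\tau$ when you pass from $\mcW_\infty\ge C$ to $\mcW(t,\tau,\cdot)\ge C$ for $t$ large.
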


\begin{proof}
The first result follows directly from the lower bounds on $\mcR(t)$ derived in Lemma \ref{L1} and the lower bound on $\ell$ resulting from \eqref{Condition}.
Additionally, \eqref{Condition} implies that the time $T_1$ at which each particle attains its minimal position is uniformly bounded so that
$$\sup_{(r,w,\ell) \in \bS(0)} T_1(r, w, \ell) \leq C =: C_1$$
due to Lemma \ref{L1}.
Because of this, taking $t$ sufficiently large, say $t \geq 2C_1$, implies a uniform lower bound on momenta due to Lemma \ref{L1}.
Finally, the positive lower bound on $\mcW_\infty$ merely follows from the uniform lower bound on $\mcW(t,\tau, r, w, \ell)$ for every $\tau \geq 0$ and $(r,w,\ell) \in \bS(\tau)$.
\end{proof}

Next, we estimate the derivatives of forward characteristics.

\begin{lemma}
\label{LDchar}
Assume $\bS(0)$ satisfies \eqref{Condition}. Then, for $\tau \geq 0$ sufficiently large with $(r,w,\ell) \in \bS(\tau)$ and any $t \geq \tau$, we have
$$\left | \frac{\partial \mcR}{\partial w}(t,\tau, r, w, \ell) \right | \leq C(t-\tau) \qquad \ \mathrm{and} \ \qquad
\left | \frac{\partial \mcW}{\partial w}(t,\tau, r, w, \ell) \right | \leq C.$$
\end{lemma}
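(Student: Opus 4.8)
The plan is to differentiate the characteristic systems \eqref{charang}, \eqref{charangrel} with respect to the parameter $w$ and then control the resulting linear variational equations by a Gr\"onwall/bootstrap argument, feeding in the decay estimates from Lemmas \ref{L2}, \ref{Lrho}, \ref{L3}, and \ref{LAEst}. Fix $\tau \geq 0$ large and $(r,w,\ell) \in \bS(\tau)$, and write $\mcR(s) = \mcR(s,\tau,r,w,\ell)$, $\mcW(s) = \mcW(s,\tau,r,w,\ell)$. By Lemma \ref{LAEst}, $r = \mcR(\tau) \geq C\tau$ is large and $\mcR(s) \geq Cs$ for every $s \geq 0$, while $|\mcW(s)| \leq C$ and $\ell\mcR(s)^{-2} \leq C$ uniformly in $s$ by Lemma \ref{L3}. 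Since $E \in C^1$ and the vector fields in \eqref{charang}, \eqref{charangrel} are $C^1$ on $\{r>0\}$, smooth dependence on parameters ensures that $P(s) := \partial_w\mcR(s,\tau,r,w,\ell)$ and $Q(s) := \partial_w\mcW(s,\tau,r,w,\ell)$ are well defined, continuous, and satisfy the variational system obtained by differentiating the characteristic equations in $w$, with $P(\tau)=0$ and $Q(\tau)=1$.

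For \eqref{VP} this system is $\dot P = Q$, $\dot Q = a(s)P$ with
$$a(s) = -\frac{3\ell}{\mcR(s)^4} + 4\pi\rho(s,\mcR(s)) - \frac{2m(s,\mcR(s))}{\mcR(s)^3},$$
and I would first observe that $|a(s)| \leq C(1+s)^{-3}$ for $s \geq \tau$: the first term is controlled by $\ell \leq C$ and $\mcR(s) \geq Cs$; the second by the sharp bound of Lemma \ref{Lrho}; the third by $m(s,\mcR(s))\mcR(s)^{-3} \leq \|E(s)\|_\infty\mcR(s)^{-1}$ together with Lemma \ref{L2} and $\mcR(s) \geq Cs$. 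Integrating against the initial data gives
$$P(s) = (s-\tau) + \int_\tau^s\!\!\int_\tau^u a(v)P(v)\,dv\,du, \qquad Q(s) = 1 + \int_\tau^s a(v)P(v)\,dv.$$
Then, setting $N(t) = \sup_{\tau \leq s \leq t}|P(s)|/(1+s-\tau)$ and using $\int_\tau^\infty(1+v)^{-3}(1+v-\tau)\,dv \leq C(1+\tau)^{-1}$, the double integral is $\leq CN(t)(1+\tau)^{-1}(s-\tau)$, so $N(t) \leq 1 + C(1+\tau)^{-1}N(t)$; for $\tau$ large this forces $N(t) \leq 2$, hence $|P(s)| \leq (s-\tau) + C(1+\tau)^{-1}(s-\tau) \leq C(s-\tau)$, and then $|Q(s)| \leq 1 + C\int_\tau^\infty(1+v)^{-3}(v-\tau)\,dv \leq C$.

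For \eqref{RVP} the variational system takes the form $\dot P = \alpha(s)P + \beta(s)Q$, $\dot Q = \gamma(s)P + \delta(s)Q$. Differentiating the relativistic velocity factor yields
$$\beta(s) = \frac{1 + \ell\mcR(s)^{-2}}{\bigl(1 + \mcW(s)^2 + \ell\mcR(s)^{-2}\bigr)^{3/2}},$$
which is only $O(1)$ (its numerator is bounded by Lemma \ref{L3}, its denominator is $\geq 1$), whereas every term making up $\alpha,\gamma,\delta$ carries a factor $\ell\mcR(s)^{-3}$, $\ell\mcR(s)^{-4}$, or $\partial_r(m(s,r)/r^2) = 4\pi\rho(s,r) - 2m(s,r)r^{-3}$ evaluated at $r = \mcR(s)$, so $|\alpha(s)| + |\gamma(s)| + |\delta(s)| \leq C(1+s)^{-3}$ for $s \geq \tau$ exactly as before. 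Here I would run a continuity argument with $Q$ as the leading quantity: on the maximal interval $[\tau,t]$ on which $|Q| \leq 2$, the $P$-equation gives $|P(s)| \leq C\int_\tau^s(1+u)^{-3}|P(u)|\,du + C(s-\tau)$, so Gr\"onwall yields $|P(s)| \leq C_*(s-\tau)$ with a fixed constant; feeding this back, $|Q(s)| \leq 1 + C\int_\tau^\infty(1+u)^{-3}\bigl((u-\tau)+1\bigr)\,du \leq 1 + C(1+\tau)^{-1} < 2$ for $\tau$ large, so the bootstrap closes on all of $[\tau,\infty)$; thus $|Q(t)| \leq C$ and $|P(t)| \leq C(t-\tau)$, as claimed.

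The main obstacle is really two modest points rather than one hard step: (i) extracting the integrable decay $|\partial_r(m(s,r)/r^2)| \leq C(1+s)^{-3}$ along characteristics, which is where the sharp density decay of Lemma \ref{Lrho} enters (the near-optimal bound \eqref{rhodecay} would also suffice); and (ii) the fact that $\beta$ in the relativistic case is $O(1)$ rather than small, so $P$ genuinely grows linearly and the Gr\"onwall bookkeeping must be organized with $Q$ as the primary, slowly-varying quantity and $P$ slaved to it.
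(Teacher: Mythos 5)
Your proposal is correct and follows essentially the same route as the paper: differentiate the characteristic equations in $w$, bound the resulting coefficients by $C(1+s)^{-3}$ using Lemmas \ref{Lrho} and \ref{LAEst}, and close a smallness-in-$\tau$ bootstrap to obtain $|\partial_w \mcR| \leq C(t-\tau)$ and $|\partial_w \mcW| \leq C$. The only differences are cosmetic bookkeeping choices (a weighted sup-norm absorption in place of the paper's continuity-time argument for \eqref{VP}, and bootstrapping on $\partial_w\mcW$ rather than $\partial_w\mcR$ for \eqref{RVP}), which do not change the substance of the argument.
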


\begin{proof}
We first prove the result for solutions of \eqref{VP}. Taking a derivative in \eqref{charang} and \eqref{charanginit} yields
\begin{equation*}
\left \{
\begin{aligned}
& \frac{\partial \ddot{\mcR}}{\partial w}(t)= \left ( \rho(t, \mcR(t)) - 2\frac{m(t, \mcR(t))}{\mcR(t)^3} -3\ell R(t)^{-4} \right ) \frac{\partial \mcR}{\partial w}(t),\\
& \frac{\partial \mcR}{\partial w}(\tau) = 0 ,\qquad \frac{\partial \dot{\mcR}}{\partial w}(\tau) = 1.
\end{aligned}
\right.
\end{equation*}
We denote the term in the parentheses by $P(t, \mcR(t))$. Due to Lemmas \ref{Lrho} and \ref{LAEst} we have
$$\left | P(t, \mcR(t)) \right | \leq C(1+t)^{-3}$$
for $t \geq 0$.
Upon integrating, we can rewrite the solution of this differential equation as
$$\frac{\partial \mcR}{\partial w}(t) = t - \tau + \int_\tau^t (t-s) P(s, \mcR(s)) \frac{\partial \mcR}{\partial w}(s) ds$$
so that
$$\left |\frac{\partial \mcR}{\partial w}(t) \right | \leq  t - \tau  + C\int_\tau^t \frac{t-s}{(1+s)^3} \left | \frac{\partial \mcR}{\partial w}(s) \right | ds$$
follows from the bound on $P$.
Now, we fix some $\delta \in [4,6]$ and define
$$T_0 = \sup \left \{ \bar{t} \geq \tau : \left | \frac{\partial \mcR}{\partial w}(s) \right | \leq \delta (s - \tau) \ \mathrm{for \ all} \ s \in [\tau, \bar{t}] \right \}.$$
Note that $T_0 > \tau$ due to the initial conditions. 
Then, estimating for $t \in [\tau, T_0)$, we have
$$\left |\frac{\partial \mcR}{\partial w}(t) \right | \leq  t - \tau  + C\delta \int_\tau^t \frac{(t-s)(s-\tau)}{(1+s)^3} ds.$$
Integrating by parts twice, we find
$$\left |\frac{\partial \mcR}{\partial w}(t) \right | \leq  t - \tau + C\delta (1+\tau)^{-1} (t-\tau) = (1 + C\delta (1+\tau)^{-1})(t-\tau) \leq 2 (t-\tau) \leq \frac{1}{2}\delta (t-\tau)$$
for $\tau$ sufficiently large. Hence, we find $T_0 =\infty$ and the first estimate follows.
The second estimate is directly implied by the first as
$$\frac{\partial \dot{\mcW}}{\partial w}(t)=P(t, \mcR(t)) \frac{\partial \mcR}{\partial w}(t)$$
so that for $\tau$ sufficiently large
$$\left | \frac{\partial \mcW}{\partial w}(t) \right | \leq 1 + C\int_\tau^t (1+s)^{-3} (s-\tau) \ ds \leq C.$$

Turning to \eqref{RVP}, we first compute the derivatives of spatial and momentum characteristics separately, yielding
$$\frac{\partial \dot{\mcR}}{\partial w}(t)= (1 + \mcW(t)^2 + \ell \mcR(t)^{-2})^{-3/2} \left ( (1 + \ell\mcR(t)^{-2}) \frac{\partial \mcW}{\partial w}(t) + \ell \mcW(t) \mcR(t)^{-3}\frac{\partial \mcR}{\partial w}(t) \right )$$
with $\frac{\partial \mcR}{\partial w}(\tau) = 0$
and
$$\frac{\partial \dot{\mcW}}{\partial w}(t)=P(t, \mcR(t)) \frac{\partial \mcR}{\partial w}(t)$$
with $\frac{\partial \mcW}{\partial w}(\tau) = 1$,
where $P$ is given as above.
Note that the coefficient of $\frac{\partial \mcW}{\partial w}(t)$ in the first of these equations is bounded above by $1$. The inequality
$$\left | \frac{\partial \mcW}{\partial w}(t) \right | \leq 1 + C\int_\tau^t (1+s)^{-3} \left | \frac{\partial \mcR}{\partial w}(s) \right | \ ds$$
follows as before.  Inserting this into the remaining equation and using Lemmas \ref{Lrho} and \ref{LAEst} yields
\begin{eqnarray*}
\left |\frac{\partial \dot{\mcR}}{\partial w}(t) \right | & \leq & \left |\frac{\partial \mcW}{\partial w}(t) \right | + C\mcR(t)^{-3} \left |\frac{\partial \mcR}{\partial w}(t) \right |   \\
& \leq & 1 + C\left (\int_\tau^t (1+s)^{-3} \left |\frac{\partial \mcR}{\partial w}(s) \right | \ ds \right ) + C(1+t)^{-3} \left |\frac{\partial \mcR}{\partial w} (t) \right |.
\end{eqnarray*}
Integrating then gives
$$\left |\frac{\partial \mcR}{\partial w}(t) \right | \leq t-\tau +  C\int_\tau^t  \left (\int_\tau^s (1+u)^{-3} \left |\frac{\partial \mcR}{\partial w}(u) \right | \ du \right ) ds + C\int_\tau^t (1+s)^{-3} \left |\frac{\partial \mcR}{\partial w} (s) \right | \ ds.$$
Similar to the previous proof, we fix some $\delta \in [4,6]$ and define
$$T_0 = \sup \left \{ \bar{t} \geq \tau : \left | \frac{\partial \mcR}{\partial w}(s) \right | \leq \delta (s - \tau) \ \mathrm{for \ all} \ s \in [\tau, \bar{t}] \right \}.$$
Estimating on the interval $[\tau, T_0)$, we have
$$\left |\frac{\partial \mcR}{\partial w}(t) \right | \leq t-\tau +  C\delta \int_\tau^t  \left (\int_\tau^s (1+u)^{-3} (u-\tau) \ du \right ) ds + C\delta \int_\tau^t (1+s)^{-3} (s-\tau)\ ds.$$
Integrating by parts, we find for $s \geq \tau$
$$\int_\tau^s (1+u)^{-3} (u-\tau) \ du \leq C(1+\tau)^{-1}$$
and because $t \geq \tau$
$$\int_\tau^t (1+s)^{-3} (s-\tau)\ ds \leq \frac{1}{2} \left ((1+\tau)^{-1} - (1+t)^{-1} \right ) \leq (1+\tau)^{-2}(t- \tau).$$
Hence, for $\tau$ sufficiently large we have
$$\left |\frac{\partial \mcR}{\partial w}(t) \right | \leq (t-\tau)\left ( 1 +  C\delta(1+\tau)^{-1} + C\delta (1+\tau)^{-2} \right ) \leq 2 (t -\tau) \leq \frac{1}{2}\delta (t -\tau).$$
Therefore, we find $T_0 =\infty$ and the first estimate follows.
The second estimate is directly implied by the first using the same argument as for \eqref{VP}.
\end{proof}

Now that we can control the growth of derivatives of characteristics, we show that the limiting momenta are increasing functions of $w$ for sufficiently large $\tau$, and this property will be useful later to perform a change of variables.

\begin{lemma}
\label{Winfinvert}
Assume $\bS(0)$ satisfies \eqref{Condition}. Then, there is $T_2 > 0$ such that for all $\tau \geq T_2$ and $(r,w,\ell) \in \bS(\tau)$, we have
$$\frac{\partial \mcW_\infty}{\partial w}(\tau, r, w, \ell) \geq \frac{1}{2}.$$
Consequently, for $\tau$ sufficiently large and $(r,w,\ell) \in \bS(\tau)$, the $C^2$ mapping $w \mapsto \mcW_\infty(\tau, r, w, \ell)$ is increasing, injective, and invertible.
\end{lemma}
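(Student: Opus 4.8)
The plan is to obtain the quantitative lower bound $\frac{\partial \mcW_\infty}{\partial w}\geq\frac12$ by differentiating the limit $\mcW_\infty=\lim_{t\to\infty}\mcW$ through an interchange of $\partial_w$ with $\lim_{t\to\infty}$, feeding in the derivative estimates already established in Lemma~\ref{LDchar}. The starting point, valid identically for \eqref{VP} and \eqref{RVP}, is the integral form of the differentiated momentum equation from the proof of Lemma~\ref{LDchar},
$$\frac{\partial \mcW}{\partial w}(t,\tau,r,w,\ell) = 1 + \int_\tau^t P(s,\mcR(s))\,\frac{\partial \mcR}{\partial w}(s)\, ds,$$
where $P(s,\mcR(s))=\rho(s,\mcR(s))-2m(s,\mcR(s))\mcR(s)^{-3}-3\ell\,\mcR(s)^{-4}$ and $\mcR(s)=\mcR(s,\tau,r,w,\ell)$. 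Using Lemma~\ref{Lrho} for $\rho$, the bound $\mcR(s)\geq Cs$ from Lemma~\ref{LAEst}, the bounds $m\leq\mcM$ and the boundedness of $\ell$ on $\bS(0)$, and $\big|\tfrac{\partial\mcR}{\partial w}(s)\big|\leq C(s-\tau)$ from Lemma~\ref{LDchar}, one has $|P(s,\mcR(s))|\leq C(1+s)^{-3}$ and therefore
$$\int_\tau^\infty \left|P(s,\mcR(s))\,\frac{\partial\mcR}{\partial w}(s)\right| ds \leq C\int_\tau^\infty (1+s)^{-3}(s-\tau)\, ds \leq C(1+\tau)^{-1},$$
with $C$ depending only on $f_0$ and the constant in \eqref{Condition}. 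Hence the integral converges absolutely as $t\to\infty$, the limit $L(\tau,r,w,\ell):=\lim_{t\to\infty}\frac{\partial\mcW}{\partial w}(t,\tau,r,w,\ell)$ exists, $|L-1|\leq C(1+\tau)^{-1}$, and $\big|\frac{\partial\mcW}{\partial w}(t,\tau,r,w,\ell)-L(\tau,r,w,\ell)\big|\leq C(1+t)^{-1}$, the last estimate being \emph{uniform} over $(r,w,\ell)$ in a fixed compact neighborhood of $\bS(\tau)$ (such a thickening still satisfies the hypotheses of Lemmas~\ref{L1}, \ref{Lrho}, \ref{LAEst}, \ref{LDchar}).

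The next step is to identify $\frac{\partial\mcW_\infty}{\partial w}$ with $L$. Fix $\tau$ and $(r,\ell)$ and regard $w\mapsto\mcW(t,\tau,r,w,\ell)$ on the relevant $w$-interval: by Lemma~\ref{L6} these functions converge pointwise to $\mcW_\infty(\tau,r,w,\ell)$ as $t\to\infty$, while their $w$-derivatives converge uniformly in $w$ to $L(\tau,r,w,\ell)$ by the estimate above. The standard theorem on differentiation of uniformly convergent sequences then yields that $w\mapsto\mcW_\infty(\tau,r,w,\ell)$ is $C^1$ with $\frac{\partial\mcW_\infty}{\partial w}=L$ (continuity of $L$ is automatic, being a uniform limit of the continuous maps $\frac{\partial\mcW}{\partial w}(t,\cdot)$, which are continuous since $E\in C^1_x$ makes the flow $C^1$ in its data). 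Choosing $T_2$ so large that $C(1+T_2)^{-1}\leq\frac12$ gives, for all $\tau\geq T_2$ and $(r,w,\ell)\in\bS(\tau)$,
$$\frac{\partial\mcW_\infty}{\partial w}(\tau,r,w,\ell) = L(\tau,r,w,\ell) \geq 1 - C(1+\tau)^{-1} \geq \frac12.$$
The stated $C^2$ regularity of $w\mapsto\mcW_\infty$ follows in exactly the same manner from the analogous second-order derivative estimates for the characteristic flow (equivalently, it is inherited from the regularity of $f$); only $C^1$-regularity together with the positive derivative bound is needed for what follows.

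Finally, for fixed $\tau\geq T_2$ and $(r,\ell)$, the function $w\mapsto\mcW_\infty(\tau,r,w,\ell)$ is $C^1$ on an interval with derivative bounded below by $\tfrac12>0$, hence strictly increasing, hence injective, and by the one-dimensional inverse function theorem it possesses a $C^1$ (indeed $C^2$) inverse on its image; this gives the asserted monotonicity, injectivity, and invertibility. No separate argument is needed for \eqref{RVP}: the differentiated momentum equation $\frac{\partial\dot{\mcW}}{\partial w}(t)=P(t,\mcR(t))\frac{\partial\mcR}{\partial w}(t)$ and the bound $\big|\frac{\partial\mcR}{\partial w}(s)\big|\leq C(s-\tau)$ are identical in form to the classical case (Lemma~\ref{LDchar}), and $\mcW_\infty$ is again the $t\to\infty$ limit of $\mcW$ by Lemma~\ref{L6}, so the computation above applies verbatim. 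The only delicate point is the interchange of limit and differentiation — specifically, securing the \emph{uniform} (in $w$) convergence of $\frac{\partial\mcW}{\partial w}(t,\cdot)$ — but this reduces to the observation that the constants in Lemmas~\ref{Lrho}, \ref{LAEst}, \ref{LDchar} depend only on $f_0$ and \eqref{Condition}, not on the individual characteristic, which is precisely how those lemmas were formulated.
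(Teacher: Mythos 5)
Your argument is correct in substance but takes a genuinely different route from the paper's. The paper differentiates the explicit integral representation of $\mcW_\infty$ supplied by Lemma~\ref{L6}: for \eqref{VP} it estimates $\partial_w$ of $\sqrt{w^2+\ell r^{-2}}+\int_\tau^\infty \frac{m(s,\mcR(s))}{\mcR(s)^2}\frac{\mcW(s)}{\sqrt{\mcW(s)^2+\ell\mcR(s)^{-2}}}\,ds$ term by term, which requires the separate estimate $\left|\frac{w}{\sqrt{w^2+\ell r^{-2}}}-1\right|\leq C\tau^{-1}$ via the lower bounds $r\geq C\tau$ and $w\geq C$ from Lemma~\ref{LAEst}, and for \eqref{RVP} it works with the implicit relation for $\sqrt{1+\mcW_\infty(\tau)^2}$, which needs the additional lower bound $\mcW_\infty(\tau)/\sqrt{1+\mcW_\infty(\tau)^2}\geq C$. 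You instead differentiate the flow itself: the variational equation behind Lemma~\ref{LDchar} gives $\frac{\partial\mcW}{\partial w}(t)=1+\int_\tau^t P\,\frac{\partial\mcR}{\partial w}\,ds$ with an absolutely convergent tail of size $C(1+\tau)^{-1}$, so $\frac{\partial\mcW}{\partial w}(t,\cdot)$ converges uniformly as $t\to\infty$ and the classical theorem on differentiating a limit identifies that limit with $\frac{\partial\mcW_\infty}{\partial w}$. This buys a single unified argument for both systems and avoids both the explicit formula and the \eqref{RVP}-specific manipulations; the price is the limit/derivative interchange, which the paper's route never has to confront because $\mcW_\infty$ is already in closed form. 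Two points to tighten: (i) the interchange theorem needs uniform convergence of $\frac{\partial\mcW}{\partial w}(t,\tau,r,\cdot,\ell)$ on $w$-intervals, and rather than asserting that Lemmas~\ref{L1}, \ref{Lrho}, \ref{LAEst}, \ref{LDchar} hold on a thickened compact neighborhood of $\bS(\tau)$ (they are stated only on $\bS(\tau)$, and Lemma~\ref{LAEst} in particular uses that points of $\bS(\tau)$ are forward images of $\bS(0)$), it is cleaner to apply the theorem on each connected component of the $w$-section of the open set $S(\tau)$, where the constants of those lemmas are already uniform; (ii) for \eqref{RVP} the differentiated momentum equation carries an extra term proportional to $\frac{\partial\mcW}{\partial w}$, coming from the $\mcW$-dependence of the relativistic factor multiplying $\ell\mcR^{-3}$ (the corresponding display in Lemma~\ref{LDchar} elides it), but its coefficient is $O\left((1+s)^{-3}\right)$ and $\frac{\partial\mcW}{\partial w}$ is bounded by Lemma~\ref{LDchar}, so your tail estimate, and hence the choice of $T_2$ and the conclusion, is unaffected.
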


\begin{proof}
We first consider solutions of \eqref{VP}. Note that the limiting momentum given by Lemma \ref{L6} is continuously differentiable as $w^2 + \ell r^{-2} \geq C > 0$ for $(r,w,\ell) \in \bS(\tau)$ and $\tau$ sufficiently large by Lemma \ref{LAEst}. In particular, we find
$$ \frac{\partial \mcW_\infty}{\partial w}(\tau, r, w, \ell) = \frac{w}{\sqrt{w^2 + \ell r^{-2}}} + \int_\tau^\infty P(s, \tau, r, w, \ell) \ ds $$
where $P$ satisfies
\begin{eqnarray*} 
| P(s) | & = & \biggl | \left [\rho(s,\mcR(s)) - 2 m(s,\mcR(s))\mcR(s)^{-3} \right ] \frac{\mcW(s)}{\sqrt{\mcW(s)^2 + \ell \mcR(s)^{-2}}} \\
& \ & \  + m(s,\mcR(s))\mcR(s)^{-2} \biggl ( (\mcW(s)^2 + \ell \mcR(s)^{-2})^{-3/2} \ell \mcR(s)^{-2}\frac{\partial \mcW}{\partial w}(s)\\
& \ & \  + 2\ell \mcR(s)^{-3}\mcW(s)\frac{\partial \mcR}{\partial w}(s) \biggr ) \biggr |\\
& \leq & C(1+s)^{-3} + C(1+s)^{-2}\left [|\mcW(s)|^{-1} + (1+s)^{-3} |s-\tau| \right ]\\
& \leq & C(1+s)^{-2}
\end{eqnarray*}
for $s \geq \tau$ and $\tau$ sufficiently large due to Lemmas \ref{L3}, \ref{Lrho}, \ref{LAEst}, and \ref{LDchar}.
Additionally, for $(r,w,\ell) \in \bS(\tau)$ and $\tau > 0$, there are $(\tilde{r}, \tilde{w}, \tilde{\ell}) \in \bS(0)$ such that
$$r = \mcR(\tau, 0, \tilde{r},\tilde{w}, \tilde{\ell}) \geq C\tau^{-1}$$
due to Lemma \ref{LAEst}.
Therefore, using the lower bound on momenta from Lemma \ref{LAEst}, we find for $\tau$ sufficiently large and $(r,w,\ell) \in \bS(\tau)$
\begin{eqnarray*}
\left | \frac{w}{\sqrt{w^2 + \ell r^{-2}}}- 1 \right | 
& = & \frac{\ell r^{-2}}{ \left ( w + \sqrt{w^2 + \ell r^{-2}} \right ) \sqrt{w^2 + \ell r^{-2}}}\\
& \leq & \frac{\ell^{1/2} r^{-1}}{w} \leq C\tau^{-1}.
\end{eqnarray*}
With this, we find
\begin{eqnarray*}
\left | \frac{\partial \mcW_\infty}{\partial w}(\tau) - 1 \right | & \leq & \left | \frac{w}{\sqrt{w^2 + \ell r^{-2}}}- 1 \right | + \int_\tau^\infty | P(s) | \ ds\\
& \leq & C(1 + \tau)^{-1}
\end{eqnarray*}
for $\tau$ sufficiently large.
Thus, there is $T_2 > 0$ such that for all $\tau \geq T_2$, we have
$$ \frac{\partial \mcW_\infty}{\partial w}(\tau, r, w, \ell) \geq \frac{1}{2}$$
for $\tau \geq T_2$ and $(r,w,\ell) \in \bS(\tau)$, and the result follows.

To obtain the analogous result for \eqref{RVP}, additional estimates are necessary. We again note that the expression for $\mcW_\infty$ given by Lemma \ref{L6} is $C^1$ and take a derivative to find
\begin{equation}
\label{WinfRVP}
\frac{\mcW_\infty(\tau)}{\sqrt{1 + \mcW_\infty(\tau)^2}} \frac{\partial \mcW_\infty}{\partial w}(\tau) = \frac{w}{\sqrt{1 + w^2 + \ell r^{-2}}} + \int_\tau^\infty P(s) \ ds
\end{equation}
where
$$| P(s) | \leq C(1+s)^{-2}$$
for $s$ sufficiently large by the same argument as for \eqref{VP}.
Now, due to Lemmas \ref{L3} and \ref{L6} we find $$\left | \mcW_\infty(\tau) + w \right | \leq C$$
for any $\tau \geq 0$, and using this with Lemmas \ref{L6} and \ref{LAEst} we find
\begin{eqnarray*}
\left | \frac{w}{\sqrt{1 + w^2 + \ell r^{-2}}} - \frac{\mcW_\infty(\tau)}{\sqrt{1 + \mcW_\infty(\tau)^2}} \right | 
& \leq & |w| \left | \frac{1}{\sqrt{1 + w^2 + \ell r^{-2}}} - \frac{1}{\sqrt{1 + \mcW_\infty(\tau)^2}} \right |\\
& \ & + \frac{\left |\mcW_\infty(\tau) - w \right |}{\sqrt{1 + \mcW_\infty(\tau)^2}}\\
& \leq & C\frac{\left | \sqrt{1 + \mcW_\infty(\tau)^2} - \sqrt{1 + w^2 + \ell r^{-2}}\right |}{\sqrt{1 + w^2 + \ell r^{-2}}\sqrt{1 + \mcW_\infty(\tau)^2}}\\
& \ &  + \left |\mcW_\infty(\tau) - w \right |\\
& \leq & C\left | \mcW_\infty(\tau)^2 - w^2 \right | + C\ell r^{-2} + \left |\mcW_\infty(\tau) - w \right |\\
& \leq & C\left ( | \mcW_\infty(\tau) - w | + \tau^{-2} \right )\\
& \leq & C(1+\tau)^{-1}
\end{eqnarray*}
for $\tau \geq 1$.
Due to Lemma \ref{LAEst}, we conclude
$$\frac{\mcW_\infty(\tau)}{\sqrt{1 + \mcW_\infty(\tau)^2}} \geq C$$
for any $\tau \geq 0$ because the function $x \mapsto \frac{x}{\sqrt{1 + x^2}}$ is increasing.
Finally, using these results with \eqref{WinfRVP} we have
\begin{eqnarray*}
\left | \frac{\partial \mcW_\infty}{\partial w}(\tau) - 1 \right | & = & \left | \frac{\mcW_\infty(\tau)}{\sqrt{1 + \mcW_\infty(\tau)^2}} \frac{\partial \mcW_\infty}{\partial w}(\tau) -  \frac{\mcW_\infty(\tau)}{\sqrt{1 + \mcW_\infty(\tau)^2}}  \right | \cdot \left |  \frac{\mcW_\infty(\tau)}{\sqrt{1 + \mcW_\infty(\tau)^2}}  \right |^{-1}\\
& \leq & C\left | \frac{w}{\sqrt{1 + w^2 + \ell r^{-2}}} - \frac{\mcW_\infty(\tau)}{\sqrt{1 + \mcW_\infty(\tau)^2}} \right | +C \int_\tau^\infty | P(s) | \ ds\\
& \leq & C(1+\tau)^{-1}
\end{eqnarray*}
for $\tau$ sufficiently large.
Hence, as for \eqref{VP} there is $T_2 > 0$ such that for all $\tau \geq T_2$, we have
$$\frac{\partial \mcW_\infty}{\partial w}(\tau) \geq \frac{1}{2}$$
for $\tau \geq T_2$ and $(r,w,\ell) \in \bS(\tau)$.
Finally, we remark that for each system $\mcW_\infty(\tau, r, w, \ell)$ is actually $C^2$, as the term $P(s)$ can be shown to be continuously differentiable and bounded for classical solutions. For brevity, we omit the details.
\end{proof}

\section{Estimates on Derivatives of the Particle Distribution}
With the optimal rate obtained for derivatives of the electric field, we can establish the optimal growth rate for derivatives of the particle distribution.

\begin{lemma}
\label{Df}
There is $C > 0$ such that for any $t \geq 0$,
$$\|\partial_r f(t) \|_\infty \leq C,$$
and
$$\|\partial_w f(t) \|_\infty \leq C(1+t).$$
\end{lemma}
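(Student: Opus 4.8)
The plan is to differentiate the characteristic representation
$$f(t,r,w,\ell) = f_0\big(\mcR(0,t,r,w,\ell),\,\mcW(0,t,r,w,\ell),\,\ell\big),$$
which holds because $f$ is transported along the characteristics of the reduced Vlasov equation, and to control the $r$- and $w$-derivatives of the backward characteristic map. The chain rule gives $\partial_r f = \partial_1 f_0\,\partial_r\mcR(0,t,\cdot) + \partial_2 f_0\,\partial_r\mcW(0,t,\cdot)$, and likewise for $\partial_w f$, so since $\nabla f_0$ is bounded it suffices to show
$$|\partial_r\mcR(0,t,\cdot)| + |\partial_r\mcW(0,t,\cdot)| \le C \quad\text{and}\quad |\partial_w\mcR(0,t,\cdot)| + |\partial_w\mcW(0,t,\cdot)| \le C(1+t)$$
uniformly over $(r,w,\ell)\in\bS(t)$. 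Under \eqref{Condition}, in force throughout this section, the reduced flow of \eqref{charang} is $C^1$ in its initial data — on the support $\mcR$ stays bounded away from $0$ by Lemma \ref{L1}, \eqref{Condition}, and the compact support of $f_0$, while the force term $\ell r^{-3}+m(t,r)r^{-2}$ is $C^1$ in $r$ there, with $\partial_r m(t,r)=4\pi r^2\rho(t,r)$ — so these derivatives solve the variational system obtained by differentiating \eqref{charang}.

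For \eqref{VP}, writing $\mcR(s) = \mcR(s,t,r,w,\ell)$ and $a(s) = \partial_r\mcR(s,t,\cdot)$, $b(s) = \partial_r\mcW(s,t,\cdot)$, that system is $\dot a = b$, $\dot b = Q(s)\,a$, where $Q(s)$ is the radial derivative of the force along $\mcR(s)$. The crucial input is the bound $|Q(s)| \le C(1+s)^{-3}$ on $[0,t]$: the $\rho$-term is handled by Lemma \ref{Lrho}; the $m(s,\mcR(s))\mcR(s)^{-3} = |E(s,\mcR(s))|\mcR(s)^{-1}$ term by Lemma \ref{L2} and the linear lower bound $\mcR(s) \ge C(1+s)$ supplied by Lemma \ref{LAEst} and \eqref{Condition}; and the $\ell\mcR(s)^{-4}$ term by $\ell \le C$ and the same lower bound — this is exactly the estimate on $P$ carried out in the proof of Lemma \ref{LDchar}. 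Integrating backward from $a(t) = 1$, $b(t) = 0$ and exchanging the order of integration, one gets $|a(s)| \le 1 + C\int_s^t (u-s)(1+u)^{-3}|a(u)|\,du \le 1 + C\int_s^t (1+u)^{-2}|a(u)|\,du$, so Grönwall gives $|a(s)| \le C$ on $[0,t]$ and then $|b(s)| \le C\int_s^t (1+u)^{-3}|a(u)|\,du \le C$. Running the same computation with data $a(t) = 0$, $b(t) = 1$ produces $|a(s)| \le (t-s) + C\int_s^t (1+u)^{-2}|a(u)|\,du$, and Grönwall with the $s$-dependent inhomogeneity yields $|a(s)| \le C(t-s) \le C(1+t)$ and then $|b(s)| \le 1 + C(1+s)^{-2}(t-s) \le C(1+t)$. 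Evaluating at $s=0$ and inserting into the chain rule proves both bounds for all $t\ge0$, a bounded initial interval being covered by the global existence theory when needed.

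For \eqref{RVP} the scheme is identical but the variational system is fully coupled, exactly as in the relativistic part of the proof of Lemma \ref{LDchar}: in the equation for $\partial\mcR$ the coefficient of $\partial\mcW$ is bounded by $1$, while every other coefficient is of order $(1+s)^{-3}$ (from $Q$ or from $\ell\mcR(s)^{-3}$). One treats the initial segment $s\in[0,s_0]$ by the crude Grönwall bound, all coefficients being bounded there, and on $[s_0,t]$ runs a two-step bootstrap — first $\sup_{[s,t]}|\partial\mcW| \le \sup_{[s,t]}|\partial\mcR|$, once $\int_s^t(1+u)^{-3}\,du$ is sufficiently small, then the estimate for $|\partial\mcR|$ as above — reaching the same bounds. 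These routine details I would omit.

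The main obstacle, and the only place where \eqref{Condition} is genuinely needed, is securing $|Q(s)| \le C(1+s)^{-3}$ uniformly on the support of $f$; in particular the singular term $\ell\mcR(s)^{-4}$ must decay this fast near $s=0$ as well, which requires knowing that $\mcR(s)$ is bounded below by a positive constant there, and this forces $\ell$ away from $0$ — together with the compact support bound $\ell r^{-2} \le C$, which then pushes $r$ away from $0$ on $\bS(0)$. Without such a bound, a decay rate slower than $(1+s)^{-2-\varepsilon}$ would yield only polynomial-in-$t$ growth of $\partial_r\mcR$ out of Grönwall rather than the uniform bound required here.
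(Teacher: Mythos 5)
Your proof is correct, but it is the Lagrangian dual of the paper's argument rather than a reproduction of it. The paper never estimates the Jacobian of the backward flow: it differentiates the Vlasov equation itself, integrates the resulting coupled transport identities for $\partial_r f$ and $\partial_w f$ along forward characteristics starting from $t=1$, and closes a single scalar Gronwall inequality for $\mcD(t)=1+\|\partial_w f(1)\|_\infty+\sup_{s\in[1,t]}\|\partial_r f(s)\|_\infty$, using $\|\partial_w f(s)\|_\infty\le s\,\mcD(s)$ in the $\partial_r f$ equation. You instead differentiate the representation $f(t)=f_0\circ(\mcR,\mcW,\mcL)(0,t,\cdot)$ and run a backward Gronwall argument on the variational system for the flow derivatives. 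The two computations are transposes of one another and hinge on the identical key input, which you correctly isolate: the linearized force coefficient $\rho(s,\mcR(s))-2m(s,\mcR(s))\mcR(s)^{-3}-3\ell\mcR(s)^{-4}$ is $O\bigl((1+s)^{-3}\bigr)$ on the support, via Lemma \ref{Lrho} and the lower bound $\mcR(s)\ge C(1+s)$ that \eqref{Condition} supplies through Lemmas \ref{L1} and \ref{LAEst}. The paper's route is slightly leaner (one scalar Gronwall, no discussion of $C^1$-dependence of the flow, and no need to observe that $\nabla f_0$ vanishes off $\bS(0)$ so that the chain-rule bound is only required along characteristics meeting the support); yours buys explicit backward flow-derivative bounds, a counterpart of which the paper must prove separately anyway for forward characteristics (Lemma \ref{LDchar}).

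Two small repairs to your write-up, neither fatal. First, for \eqref{VP} your bound $|a(s)|\le C(t-s)$ follows from the plain backward Gronwall inequality with inhomogeneity $g(s)=t-s$, since $(v-s)(1+v)^{-3}\le(1+v)^{-2}$ and $\int_s^t(1+v)^{-2}(t-v)\,dv\le(t-s)$; no bootstrap or largeness restriction on $s$ is needed there, so the appeal to "Gr\"onwall with the $s$-dependent inhomogeneity" is fine but should be stated in this form. Second, in the \eqref{RVP} sketch the intermediate claim $\sup_{[s,t]}|\partial\mcW|\le\sup_{[s,t]}|\partial\mcR|$ cannot hold near $s=t$ when the terminal data are $\partial\mcR(t)=0$, $\partial\mcW(t)=1$; the correct closing inequality is $\sup_{[s,t]}|\partial\mcW|\le 1+\varepsilon\sup_{[s,t]}|\partial\mcR|$, after which the scheme goes through as you describe.
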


\begin{proof}
We prove the estimate directly for \eqref{VP} and note that the only difference for \eqref{RVP} is the appearance of a $w$-derivative of the relativistic velocity in the resulting expression for $\partial_w f$, and this new term satisfies
$$\left |\partial_w \left ( \frac{w}{\sqrt{1 + w^2 + \ell r^{-2}}} \right ) \right | \leq 1.$$
Taking derivatives in the Vlasov equation and integrating along characteristics, we find for $t \geq 1$
\begin{align*}
& \partial_{w} f(t,r,w,\ell) = \partial_{w} f(1, \mcR(1), \mcW(1), \ell) - \int_1^t \partial_{r} f(s, \mcR(s), \mcW(s), \ell) \ ds\\
& \partial_{r} f(t,r,w,\ell) = \partial_{r} f(1, \mcR(1), \mcW(1), \ell)\\
& \qquad - \int_1^t \left (\rho(s, \mcR(s)) - 2\frac{m(s,\mcR(s))}{\mcR(s)^3} - 3\ell \mcR(s)^{-4} \right ) \partial_w f(s, \mcR(s), \mcW(s), \ell) \ ds.
\end{align*}
The first equation implies
$$ \| \partial_{w} f(t) \|_\infty \leq  \| \partial_{w} f(1) \|_\infty + (t-1) \sup_{s \in [1,t]} \| \partial_{r} f(s) \|_\infty \leq t\mcD(t)$$
where
$$\mcD(t) = 1 + \|\partial_w f(1) \|_\infty + \sup_{s \in [1,t]} \| \partial_r f(s) \|_\infty.$$
Using this estimate with Lemmas \ref{Lrho} and \ref{LAEst}, the second equation gives
\begin{eqnarray*}
\| \partial_r f(t) \|_\infty & \leq &  \| \partial_r f(1) \|_\infty + C\int_1^t (1+s)^{-3}  \| \partial_w f(s) \|_\infty \ ds\\
& \leq & C + C\int_1^t (1+s)^{-2} \mcD(s) \ ds.
\end{eqnarray*}
Taking the supremum in time yields
$$\mcD(t) \leq C + C\int_1^t (1+s)^{-2} \mcD(s) \ ds,$$
and upon invoking Gronwall's inequality, we find
$$\mcD(t) \leq C\exp \left ( \int_1^t (1+s)^{-2} \ ds \right ) \leq C$$
for $t \geq 1$. 
As $\mcD(t)$ is bounded, we find
$$\|\partial_r f(t) \|_\infty \leq C$$
and
$$\|\partial_w f(t) \|_\infty \leq t\mcD(t) \leq C(1+t)$$
for all $t \geq 1$, and these estimates hold for $t \geq 0$.
\end{proof}

\section{Limiting Behavior of the Spatial Average}

With the behavior of the characteristics well-understood as $t \to \infty$, we would also like to understand the distribution of particles in this same limit. Because $f$ is constant along characteristics and Lemma \ref{L8} implies that particle positions diverge almost everywhere on the support of $f$ as $t \to \infty$, it is impossible to represent a pointwise limiting value of $f$ in terms of these variables, and indeed it follows that $f \rightharpoonup 0$ as $t \to \infty$. Hence, it appears that some information may be lost in the limit. However, we know that the radial momentum characteristics converge as $t \to \infty$, while the angular momentum is constant along characteristics, and hence also possesses a limit. Thus, we can construct a particle distribution as a function of these limiting momenta that enables one to recover some microscopic information as $t \to \infty$, and this will represent the limiting behavior of the spatial average.

To do so, we first define the collection of all limiting radial momenta on $\bS(t)$ and show that it is a time-independent set of nonnegative values with compact closure.

\begin{lemma}
\label{L7}
For any $\tau \geq 0$, define the set
$$\Omw(\tau) = \left \{ \mcW_\infty(\tau, r, w, \ell) : (r, w, \ell) \in \bS(\tau) \right \}.$$
Then, for any $s, \tau \geq 0$
$$\Omw(\tau) = \Omw(s).$$
Hence, we define
$$\Omw = \left \{ \mcW_\infty(0, r, w, \ell) : (r, w, \ell) \in \bS(0) \right \}$$
so that $\Omw(\tau) = \Omw$ for all $\tau \geq 0$.
Additionally, $\overline{\Om}_w \subset [0,\infty)$ is a compact interval.
\end{lemma}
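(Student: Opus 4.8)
The strategy is to reduce everything to two facts already in hand: the flow--invariance of $\mcW_\infty$ (identity~\eqref{Winfinv} of Lemma~\ref{L6}) and the transport of the support of $f$ along characteristics. For the latter, write $\Phi_{s,\tau}(r,w,\ell):=\bigl(\mcR(s,\tau,r,w,\ell),\mcW(s,\tau,r,w,\ell),\ell\bigr)$; since $f$ is constant along characteristics and the characteristic flow on $\bfR^6$ is a homeomorphism carrying $\operatorname{supp}f(\tau)$ onto $\operatorname{supp}f(s)$, projecting down shows that $\Phi_{s,\tau}$ is a bijection of $\bS(\tau)$ onto $\bS(s)$ with inverse $\Phi_{\tau,s}$. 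In particular $\bS(\tau)=\Phi_{\tau,0}(\bS(0))$ is compact for every $\tau\ge 0$, so Lemma~\ref{L6} may be invoked with $\mcU=\bS(\tau)$.

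Granting these, the equality of the sets is immediate. Fix $s,\tau\ge 0$ and take $v\in\Omw(\tau)$, say $v=\mcW_\infty(\tau,r,w,\ell)$ with $(r,w,\ell)\in\bS(\tau)$. Putting $(r',w',\ell):=\Phi_{s,\tau}(r,w,\ell)\in\bS(s)$ and applying \eqref{Winfinv} with $t=s$ to the characteristic issuing from $(r,w,\ell)$ at time $\tau$, we obtain $\mcW_\infty(s,r',w',\ell)=\mcW_\infty(\tau,r,w,\ell)=v$, hence $v\in\Omw(s)$. Thus $\Omw(\tau)\subseteq\Omw(s)$, and exchanging the roles of $s$ and $\tau$ gives equality; in particular $\Omw(\tau)=\Omw(0)=:\Omw$ for every $\tau\ge 0$, which justifies the definition of $\Omw$.

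It remains to describe $\overline{\Om}_w=\overline{\mcW_\infty(0,\bS(0))}$. The set $\bS(0)$ is closed by definition and bounded, since the compact support of $f_0$ forces $r\le\mfR(0)$, $|w|\le\mfW(0)$ and $0\le\ell\le C$ on $S(0)$; hence it is compact. By Lemma~\ref{L6}, $(r,w,\ell)\mapsto\mcW_\infty(0,r,w,\ell)$ is continuous and nonnegative on $\mcU=\bS(0)$, so $\Omw$ is a compact subset of $[0,\infty)$ and therefore $\overline{\Om}_w=\Omw$ is compact and contained in $[0,\infty)$. That it is an \emph{interval} is the one point needing care: it follows because a continuous image of a connected set is an interval, once one knows $\bS(0)$ is connected --- which descends from connectedness of $\operatorname{supp}f_0$ through the reduction map $(x,v)\mapsto(r,w,\ell)$, whose fibres over $\{r>0\}$ are connected (each a copy of $SO(3)$, degenerating to $S^2$ when $\ell=0$). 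The transport--plus--invariance step is the substantive content; the interval assertion is the only delicate one, resting entirely on connectedness of the reduced support.
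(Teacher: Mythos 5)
Your treatment of the set equality and of the compactness assertion is correct and is essentially the paper's own argument: take $w_\infty=\mcW_\infty(\tau,r,w,\ell)$ with $(r,w,\ell)\in\bS(\tau)$, apply the flow invariance \eqref{Winfinv}, use that the characteristic flow carries $\bS(\tau)$ into $\bS(t)$ to conclude $\Omw(\tau)\subseteq\Omw(t)$, and swap $\tau$ and $t$ for the reverse inclusion; nonnegativity then comes from Lemma \ref{L1} and compactness of $\overline{\Om}_w$ from continuity of $\mcW_\infty(0,\cdot,\cdot,\cdot)$ (Lemma \ref{L6}) on the compact set $\bbS(0)$. Your preliminary remarks -- that $\Phi_{s,\tau}$ is a bijection of $\bS(\tau)$ onto $\bS(s)$ and that $\bS(\tau)$ is compact, so Lemma \ref{L6} applies with $\mcU=\bS(\tau)$ -- are points the paper leaves implicit, and making them explicit is a genuine improvement rather than a change of route.

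The one place you diverge is the interval assertion, and there your argument has a gap relative to the stated hypotheses: you obtain the interval property from connectedness of $\bS(0)$, which you in turn deduce from connectedness of $\mathrm{supp}(f_0)$. But connectedness of $\mathrm{supp}(f_0)$ is not assumed anywhere -- the data are only required to be $C^1_c$, nonnegative, and spherically symmetric -- and it can fail (e.g.\ $f_0$ supported on two disjoint shells in phase space), in which case $\bS(0)$ may be disconnected and the continuous image $\Omw$ need not be an interval. So as written this step rests on an unstated assumption. To be fair, the paper's own proof is no more complete here: it asserts that compactness of $\overline{\Om}_w$ ``further implies'' it is a compact interval, which does not follow without a connectedness ingredient, so you have at least correctly identified what is actually needed; you simply cannot derive it from the hypotheses as given. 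Everything else in the lemma -- time-independence of $\Omw(\tau)$ and the fact that $\overline{\Om}_w$ is a compact subset of $[0,\infty)$ -- is fully established by your argument.
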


\begin{proof}
If $\tau \geq 0$ and $w_\infty \in \Omw(\tau)$, then $w_\infty = \mcW_\infty(\tau, r, w, \ell)$ for some $(r, w, \ell) \in \bS(\tau)$, and thus by \eqref{Winfinv} we have
$$w_\infty = \mcW_\infty(t, \mcR(t, \tau, r, w, \ell),\mcW(t, \tau, r, w, \ell), \ell)$$
for any $t \geq 0$.
Of course, $(r, w, \ell) \in \bS(\tau)$ implies $\left ( \mcR(t, \tau, r, w, \ell), \mcW(t, \tau, r, w, \ell), \ell \right ) \in \bS(t)$
so $w_\infty \in \Omw(t)$, and we find $\Omw(\tau) \subseteq \Omw(t)$. Since this is true for any $\tau, t \geq 0$, we merely swap these values to obtain the reverse inclusion, and it follows that
$$\Omw(\tau) = \Omw(t) = \Omw(0) = \left \{ \mcW_\infty(0, r, w, \ell) : (r, w, \ell) \in \bS(0) \right \}$$
for all $\tau, t \geq 0$.

Finally, in view of Lemma \ref{L1}, $\mcW_\infty (0,r,w,\ell) \geq 0$ for every $(r,w,\ell) \in \bS(0)$ so that $\Omw \subset [0, \infty).$ Additionally, by Lemma \ref{L6} we know $\mcW_\infty (0,r,w,\ell)$ is continuous, and thus its range $\overline{\Om}_w$ on the compact set $\bbS(0)$ must also be compact, which further implies that $\overline{\Om}_w$ is a compact interval.
\end{proof}

Similar to the previous lemma, we could define the limiting angular momentum function $\mcL_\infty(\tau, r, w, \ell)$ for every $\tau \geq 0$ and $(r,w,\ell) \in \bS(\tau)$, but because this quantity is conserved along characteristics, we merely have $\mcL_\infty(\tau, r, w, \ell) = \ell$
and thus, there is no need to use the $\mcL_\infty$ notation.
That being said, the set of all angular momentum limits will still be useful. Hence, analogous to Lemma \ref{L7} we define
$$\Oml(\tau) = \{\mcL_\infty(\tau, r, w, \ell) : (r,w,\ell) \in \bS(\tau)\}$$
and note that this set is a time-independent projection
$$\Oml(\tau) = \Oml = \pi_\ell(\bS(0)) = \{\ell \in [0,\infty) : (r,w,\ell) \in \bS(0)\}$$
for every $\tau \geq 0$ because of the invariance of the angular momentum under the characteristic flow.
Additionally, because $\pi_\ell$ is continuous and $\bbS(0)$ is compact, it follows that $\overline{\Om}_\ell \subset [0,\infty)$ is a compact interval.
With this, we define the product $\Om = \Omw \times \Oml$ and note that $\overline{\Om}$ is compact.
%
Finally, we prove the limiting behavior of the spatial average, which tends to a function of the particle momentum limits.

\begin{lemma}
\label{L9}
Assume $\bS(0)$ satisfies \eqref{Condition}. Then, there exists a particle distribution $F_\infty \in C_c^1(\bfR \times [0,\infty))$ with $\mathrm{supp}(F_\infty) = \overline{\Om}$ such that
$$F(t,w, \ell) = \int_0^\infty f(t,r,w,\ell) \ dr$$
satisfies $F(t,w,\ell) \rightharpoonup F_\infty(w,\ell)$ as a measure as $t \to \infty$, namely
$$\lim_{t \to \infty} \int_{-\infty}^\infty\int_0^\infty \psi(w,\ell) F(t,w,\ell) d\ell dw = \int_{-\infty}^\infty\int_0^\infty \psi(w,\ell) F_\infty(w,\ell) d\ell dw$$
for every $\psi \in C_b \left (\mathbb{R} \times [0,\infty)\right )$.
%
In particular, the limiting distribution satisfies
\begin{enumerate}
\item $\displaystyle 4\pi^2\iint\limits_\Om \finf(\winf, \linf) \ d\linf d\winf = \mcM$
\item For every $\phi \in L^1_{\mathrm{loc}}(0,\infty)$,
$$\iint\limits_\Om \phi(\linf) \finf(\winf, \linf) \ d\linf d\winf = \mcJ_\phi$$ 
\item For solutions of \eqref{VP},
$$2\pi^2 \iint\limits_\Om \winf^2 \finf(\winf, \linf) \ d\linf d\winf = \mcEVP,$$
while for solutions of \eqref{RVP},
$$4\pi^2 \iint\limits_\Om \sqrt{1 + \winf^2} \finf(\winf, \linf) \ d\linf d\winf = \mcERVP.$$
\end{enumerate}
\end{lemma}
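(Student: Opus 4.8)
The plan is to first identify the weak-$*$ limit of $F(t,\cdot)$ as the pushforward of $f_0$ under the momentum-limit map, and then to promote that measure to a $C^1_c$ density by reparametrizing the flow at a large time $\tau$, where Lemma~\ref{Winfinvert} makes $w\mapsto\mcW_\infty(\tau,r,w,\ell)$ invertible. \emph{Step 1 (identification).} Fix $\psi\in C_b(\R\times[0,\infty))$. Since $f$ is constant along characteristics and the characteristic flow in the $(r,w,\ell)$ variables is measure preserving (as in the proof of Lemma~\ref{L2}), the change of variables $(r,w,\ell)\mapsto(\tilde r,\tilde w,\tilde\ell)=(\mcR(0,t,r,w,\ell),\mcW(0,t,r,w,\ell),\ell)$ gives
$$\int_{-\infty}^\infty\!\int_0^\infty \psi(w,\ell)F(t,w,\ell)\,d\ell\,dw = \iiint\limits_{\bS(0)} \psi\bigl(\mcW(t,0,\tilde r,\tilde w,\tilde\ell),\tilde\ell\bigr)f_0(\tilde r,\tilde w,\tilde\ell)\,d\tilde\ell\,d\tilde w\,d\tilde r.$$
By Lemma~\ref{L6}, $\mcW(t,0,\cdot)\to\mcW_\infty(0,\cdot)$ uniformly on the compact set $\bS(0)$, and by Lemma~\ref{L3} these quantities stay in a fixed compact set, so dominated convergence yields
$$\lim_{t\to\infty}\int_{-\infty}^\infty\!\int_0^\infty \psi(w,\ell)F(t,w,\ell)\,d\ell\,dw = \iiint\limits_{\bS(0)} \psi\bigl(\mcW_\infty(0,\tilde r,\tilde w,\tilde\ell),\tilde\ell\bigr)f_0(\tilde r,\tilde w,\tilde\ell)\,d\tilde\ell\,d\tilde w\,d\tilde r =: \langle\mu,\psi\rangle .$$

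\textbf{Step 2 (the limit has a $C^1_c$ density).} Next I would pick $\tau\ge T_2$, with $T_2$ as in Lemma~\ref{Winfinvert}; by Lemma~\ref{LAEst}, $\bS(\tau)$ is then contained in $\{r\ge C\tau\}$ and, being the support of a classical solution, is compact with $f(\tau,\cdot)\in C^1_c$. Using the flow-invariance \eqref{Winfinv} of $\mcW_\infty$ together with the measure-preserving change of variables back to time $\tau$,
$$\langle\mu,\psi\rangle = \iiint\limits_{\bS(\tau)} \psi\bigl(\mcW_\infty(\tau,r,w,\ell),\ell\bigr)f(\tau,r,w,\ell)\,d\ell\,dw\,dr .$$
For fixed $(r,\ell)$ with $r\ge C\tau$, Lemma~\ref{Winfinvert} gives that $w\mapsto\mcW_\infty(\tau,r,w,\ell)$ is a $C^2$ diffeomorphism with derivative $\ge\tfrac12$; writing its inverse as $w=W(\tau,r,w_\infty,\ell)$ and changing variables in $w$,
$$\langle\mu,\psi\rangle = \iint \psi(w_\infty,\ell)\,F_\infty(w_\infty,\ell)\,d\ell\,dw_\infty,\qquad F_\infty(w_\infty,\ell):=\int_0^\infty \frac{f\bigl(\tau,r,W(\tau,r,w_\infty,\ell),\ell\bigr)}{\partial_w\mcW_\infty\bigl(\tau,r,W(\tau,r,w_\infty,\ell),\ell\bigr)}\,dr .$$
Since $\psi\in C_b$ is arbitrary and the left side is independent of $\tau$, this $F_\infty$ is well defined, and $F(t,\cdot)\rightharpoonup F_\infty$. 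The integrand is $C^1$ in $(w_\infty,\ell)$ — $f(\tau,\cdot)\in C^1$, $W$ is $C^1$ by the inverse function theorem (using $\partial_w\mcW_\infty\ge\tfrac12$), and $\partial_w\mcW_\infty(\tau,\cdot)$ is $C^1$ and bounded below — and is supported in a fixed compact $r$-interval, so differentiating under the integral sign shows $F_\infty\in C^1$. Finally the integrand vanishes unless $(r,W,\ell)\in\bS(\tau)$, which forces $\ell\in\pi_\ell(\bS(\tau))=\Oml$ and $w_\infty=\mcW_\infty(\tau,r,w,\ell)\in\Omw(\tau)=\Omw$ by Lemma~\ref{L7}; hence $\mathrm{supp}(F_\infty)\subseteq\overline{\Om}$, with the reverse inclusion coming from positivity of $f(\tau,\cdot)$ on the open set $S(\tau)$ and continuity.

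\textbf{Step 3 (moment identities).} Each identity follows from Step 1 by testing with a $\psi\in C_b$ that agrees on the compact set $\overline{\Om}$ with, respectively, $1$, $\phi(\ell)$, $w_\infty^2$, and $\sqrt{1+w_\infty^2}$: the left side becomes the corresponding moment of $F_\infty$, and the right side is recognized as a conserved quantity. For mass, $4\pi^2\iint_\Om F_\infty\,d\ell\,dw_\infty=4\pi^2\iiint_{\bS(0)}f_0=\mcM$. For angular momentum, since $\ell$ is conserved along characteristics the $\ell$-marginal of $F(t,\cdot)$ is independent of $t$, so integrating against $\phi\in L^1_{\mathrm{loc}}$ and recalling \eqref{consangmom} gives $\iint_\Om\phi(\ell)F_\infty=\mcJ_\phi$. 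For the energy, conservation of $\mcEVP$ together with the change of variables to time $0$ writes the kinetic part at time $t$ as $2\pi^2\iiint_{\bS(0)}(\mcW(t,0,\cdot)^2+\ell\mcR(t,0,\cdot)^{-2})f_0$, while the potential part is $\tfrac12\|E(t)\|_2^2$; by Lemma~\ref{L1}, $\ell\mcR(t,0,\cdot)^{-2}\le Ct^{-2}\to0$, and by Lemma~\ref{PotDecay} with $p=2$, $\|E(t)\|_2^2\to0$, so dominated convergence and Lemma~\ref{L6} give $\mcEVP=2\pi^2\iiint_{\bS(0)}\mcW_\infty(0,\cdot)^2 f_0=2\pi^2\iint_\Om w_\infty^2 F_\infty$; the same argument with $\sqrt{1+w^2+\ell r^{-2}}$ handles \eqref{RVP}.

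\textbf{Main obstacle.} The crux is Step 2: converting the weak-$*$ limit $\mu$ into a genuine $C^1_c$ function rather than merely a measure. This is exactly where the reparametrization to a large time $\tau$ is indispensable — only for large $\tau$ does Lemma~\ref{Winfinvert} provide the invertibility of $w\mapsto\mcW_\infty(\tau,r,w,\ell)$ with non-degenerate Jacobian, and only then does Lemma~\ref{LAEst} keep $\bS(\tau)$ bounded away from $r=0$, so that the $r$-integral defining $F_\infty$ has a smooth, compactly supported integrand that may be differentiated in $(w_\infty,\ell)$. The remaining steps are comparatively routine applications of the decay estimates and conservation laws already established.
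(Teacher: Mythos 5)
Your proposal is correct and follows essentially the same route as the paper: identify the weak limit via the measure-preserving characteristic change of variables and the uniform convergence $\mcW(t)\to\mcW_\infty$, then define $F_\infty$ at a large time $\tau\geq T_2$ by inverting $w\mapsto\mcW_\infty(\tau,r,w,\ell)$ using Lemma~\ref{Winfinvert}, and obtain the moment identities from the conservation laws together with Lemma~\ref{L1} and Lemma~\ref{PotDecay} with $p=2$. The only cosmetic difference is that you pass through time $0$ and invoke the flow invariance \eqref{Winfinv} before transferring to time $\tau$, whereas the paper changes variables directly to the large time $T$, and you handle $\phi\in L^1_{\mathrm{loc}}$ via the time-independent $\ell$-marginal rather than the paper's remark that the test class extends to $\psi$ merely continuous in $w$; both devices are equivalent here.
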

\begin{proof}
We first prove the weak limit. 
Let $\psi  \in C_b \left (\mathbb{R} \times [0,\infty)\right )$ be given and fix any $T \geq T_2$ from Lemma \ref{Winfinvert}.
Then, we apply the measure-preserving change of variables, but at time $T$, so that
\begin{eqnarray*}
& \ & \lim_{t \to \infty} \int_{-\infty}^\infty\int_0^\infty \psi(w,\ell) F(t,w,\ell) d\ell dw\\
& = & \lim_{t \to \infty} \int_0^\infty \int_{-\infty}^\infty\int_0^\infty \psi(w,\ell) f(t,r,w,\ell) d\ell dw dr\\
& = & \lim_{t \to \infty} \iiint\limits_{\bS(t)} \psi(w,\ell) f(T, \mcR(T,t,r, w, \ell), \mcW(T,t,r, w, \ell), \ell) \ d\ell dw dr\\
& = & \lim_{t \to \infty}\iiint\limits_{\bS(T)} \psi(\mcW(t,T,\tilde{r}, \tilde{w}, \tilde{\ell}), \tilde{\ell}) f(T, \tilde{r}, \tilde{w}, \tilde{\ell}) \ d\tilde{\ell} d\tilde{w} d\tilde{r}\\
& = & \iiint\limits_{\bS(T)} \psi(\mcW_\infty(T,\tilde{r}, \tilde{w}, \tilde{\ell}), \tilde{\ell}) f(T, \tilde{r}, \tilde{w}, \tilde{\ell}) \ d\tilde{\ell} d\tilde{w} d\tilde{r}.
\end{eqnarray*}
Now, by Lemma \ref{Winfinvert}, for any $(\tilde{r}, \tilde{w}, \tilde{\ell}) \in \bS(T)$, the mapping $\tilde{w} \mapsto \mcW_\infty(T, \tilde{r}, \tilde{w}, \tilde{\ell})$ is $C^2$ with $$\frac{\partial \mcW_\infty}{\partial w}(T, \tilde{r}, \tilde{w}, \tilde{\ell}) \geq \frac{1}{2},$$ and thus bijective from $\pi_w(\bS(T))$ to $\Omega_w$. 
Hence, letting $u = \mcW_\infty(T, \tilde{r}, \tilde{w}, \tilde{\ell})$, we perform a change of variables and drop the tilde notation to find
\begin{align*}
& \lim_{t \to \infty} \int_{-\infty}^\infty\int_0^\infty \psi(w,\ell) F(t,w,\ell) d\ell dw\\
& = \int_0^\infty \int_{-\infty}^\infty \int_0^\infty \psi(u, \ell) f(T, r, w(u), \ell) \frac{\chfn_{\Omega}(u, \ell)}{\frac{\partial \mcW_\infty}{\partial w}(T, r, w(u), \ell)} \ d\ell du dr
\end{align*}
where for fixed $r, \ell$ in the support of $f(T)$, the $C^1$ function $w: \Om_w \to \pi_w(\bS(T)) $ is given by
$$w(u) = \mcW^{-1}_\infty(T,r,u, \ell).$$
Therefore, for any $(u,\ell) \in \bfR \times [0,\infty)$ define 
$$F_\infty(u,\ell) = \int_0^\infty f(T, r, w(u), \ell) \chfn_{\Omega}(u, \ell) \left (\frac{\partial \mcW_\infty}{\partial w}(T, r, w(u), \ell) \right)^{-1} \ dr,$$
and notice
$$| F_\infty(u,\ell) | \leq 2\|f_0\|_\infty \left |\pi_r(\bS(T))\right | \leq C(1+ T) \leq C.$$
Thus, $F_\infty \in C_c^1(\bfR \times [0,\infty))$ and satisfies
\begin{equation}
\label{Weaklimit}
\lim_{t \to \infty} \int_{-\infty}^\infty\int_0^\infty \psi(w,\ell) F(t,w,\ell) d\ell dw = \int_{-\infty}^\infty\int_0^\infty \psi(u,\ell) F_\infty(u,\ell) d\ell du.
\end{equation}
Notice further that due to the compact support and regularity of $F(t)$ and $F_\infty$, we only need $\psi \in L^1_{\mathrm{loc}}(\bfR \times [0,\infty))$ with $\psi$ continuous in $w$ for this equality to hold.

Next, we show that the conservation laws are maintained in the limit, though this will merely follow from \eqref{Weaklimit}.
Indeed, choosing $\psi(w, \ell) = 1$ within \eqref{Weaklimit} and using the time-independence of the total mass, we have
\begin{eqnarray*}
\int_{-\infty}^\infty\int_0^\infty  \finf(\winf, \linf) \ d\linf d\winf 
& = & \lim_{t \to \infty} \int_{-\infty}^\infty\int_0^\infty F(t,w,\ell) d\ell dw \\
& = & \lim_{t \to \infty} \int_0^\infty \int_{-\infty}^\infty\int_0^\infty f(t, r, w, \ell) \ d\ell dw dr\\
& = & \int_0^\infty \int_{-\infty}^\infty\int_0^\infty f_0( r, w, \ell) \ d\ell dw dr\\
& = & \frac{1}{4\pi^2} \mcM.
\end{eqnarray*}
Similarly, for any $\phi \in L^1_{\mathrm{loc}}(0,\infty)$, we choose $\psi(w, \ell) = \phi(\ell)$ within \eqref{Weaklimit}, which yields
\begin{eqnarray*}
\int_{-\infty}^\infty\int_0^\infty  \phi(\ell) \finf(\winf, \linf) \ d\linf d\winf 
& = & \lim_{t \to \infty} \int_{-\infty}^\infty\int_0^\infty \phi(\ell) F(t,w,\ell) dw d\ell\\
& = & \lim_{t \to \infty} \int_0^\infty \int_{-\infty}^\infty\int_0^\infty \phi(\ell) f(t, r, w, \ell) \ drdwd\ell\\
& = & \int_0^\infty \int_{-\infty}^\infty\int_0^\infty \phi(\ell) f_0( r, w, \ell) \ drdwd\ell\\
& = & \mcJ_\phi.
\end{eqnarray*}
In particular, this shows that the total angular momentum is preserved in the limit.

To establish the energy identity, we use the energy conservation of \eqref{VP}, the measure-preserving property, Lemma \ref{L1}, and Lemma \ref{PotDecay} with $p = 2$ to find
\begin{eqnarray*}
\mcEVP & = & \lim_{t \to \infty} \left ( \iint \frac{1}{2} |v |^2 f(t, x,v) \ dv dx + \frac{1}{2} \int |E(t,x)|^2 \ dx \right)\\
& = & \lim_{t \to \infty} \iint \frac{1}{2} |v |^2 f(t, x,v) \ dv dx\\
& = & 2\pi^2\lim_{t \to \infty} \iiint\limits_{\bS(t)} \left (w^2 + \ell r^{-2} \right ) f(t,r,w,\ell) \ d\ell dw dr\\
& = & 2\pi^2\lim_{t \to \infty} \iiint\limits_{\bS(0)} \left (\mcW(t,0,\tilde{r},\tilde{w},\tilde{\ell})^2 + \tilde{\ell} \mcR(t,0,\tilde{r}, \tilde{w}, \tilde{\ell})^{-2} \right )f_0(\tilde{r},\tilde{w},\tilde{\ell}) \ d\tilde{\ell} d\tilde{w} d\tilde{r} \\
& = & 2\pi^2 \iiint\limits_{\bS(0)} \mcW_\infty(0,\tilde{r},\tilde{w},\tilde{\ell})^2 f_0(\tilde{r},\tilde{w},\tilde{\ell}) \ d\tilde{\ell} d\tilde{w} d\tilde{r}.
\end{eqnarray*}
Now, choosing $\psi(w, \ell) = w^2$ within \eqref{Weaklimit} and using the standard change of variables, we have
\begin{eqnarray*}
\int_{-\infty}^\infty\int_0^\infty  w^2 \finf(\winf, \linf) \ d\linf d\winf 
& = & \lim_{t \to \infty} \int_{-\infty}^\infty\int_0^\infty w^2 F(t,w,\ell) d\ell dw\\
& = & \lim_{t \to \infty} \int_0^\infty \int_{-\infty}^\infty\int_0^\infty w^2 f(t, r, w, \ell) \ d\ell dw dr\\
& = & \lim_{t \to \infty}  \iiint\limits_{\bS(0)}  \mcW(t,0,\tilde{r},\tilde{w},\tilde{\ell})^2 f_0(\tilde{r},\tilde{w},\tilde{\ell}) \ d\tilde{\ell} d\tilde{w} d\tilde{r}\\
& = & \iiint\limits_{\bS(0)} \mcW_\infty(0,\tilde{r},\tilde{w},\tilde{\ell})^2 f_0(\tilde{r},\tilde{w},\tilde{\ell}) \ d\tilde{\ell} d\tilde{w} d\tilde{r}.
\end{eqnarray*}
Combining these two equalities then yields the energy conservation law.
To obtain the same result for \eqref{RVP}, we merely repeat these steps to find
$$\mcERVP  = 4\pi^2 \iiint\limits_{\bS(0)} \sqrt{1 + \mcW_\infty(0,\tilde{r},\tilde{w},\tilde{\ell})^2} f_0(\tilde{r},\tilde{w},\tilde{\ell}) \ d\tilde{\ell} d\tilde{w} d\tilde{r}$$
and use $\psi(w,\ell) = \sqrt{1 +  w^2}$ in \eqref{Weaklimit}.
\end{proof}

The final lemma uses the fact that $w$-derivatives of the particle distribution grow more slowly along the associated linear transport characteristics, $r +wt$ or $r + \hat{w}t$, in order to show uniform convergence of the spatial average. A similar idea was recently utilized in \cite{Ionescu} to prove a modified scattering result for small data solutions of \eqref{VPgeneral}, though we will use different analytic tools.

\begin{lemma}
\label{FCconv}
Assume $\bS(0)$ satisfies \eqref{Condition}. Then, the spatial average further satisfies $F(t,w, \ell) \to F_\infty(w, \ell)$ uniformly as $t \to \infty$ with
$$\| F(t) - F_\infty \|_{L^\infty(\bfR \times [0,\infty))} \leq C(1+t)^{-1}(1 + \ln^2(1+t))$$
for all $t \geq 0$.
\end{lemma}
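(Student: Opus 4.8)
The plan is to show that $t\mapsto F(t,w,\ell)$ is uniformly Cauchy as $t\to\infty$, with an explicit rate, by controlling $\partial_t F$ pointwise. Since $f(t)$ is compactly supported in $r$ and under \eqref{Condition} its support stays away from $r=0$, the function $F(t,w,\ell)=\int_0^\infty f(t,r,w,\ell)\,dr$ is $C^1$ in $t$ and, integrating the Vlasov equation in $r$ (the transport term integrates to zero), one gets for \eqref{VP}
\[
\partial_t F(t,w,\ell)=-\int_0^\infty \beta(t,r)\,\partial_w f(t,r,w,\ell)\,dr,\qquad \beta(t,r):=\frac{\ell}{r^3}+\frac{m(t,r)}{r^2},
\]
and an analogous identity for \eqref{RVP}, carrying one extra term from the $r$-dependence of the relativistic radial velocity. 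I claim it suffices to prove the uniform bound $|\partial_t F(t,w,\ell)|\le C(1+t)^{-2}(1+\ln^2(1+t))$: integrating this from $t$ to $\infty$ shows $F(t,\cdot)$ converges uniformly, at rate $C(1+t)^{-1}(1+\ln^2(1+t))$, to some $\bar F\in C_c(\bfR\times[0,\infty))$; since $\|F(t,\cdot)\|_\infty\le\|f_0\|_\infty$ and $F(t,\cdot)$ is supported in a fixed compact subset of $\bfR\times[0,\infty)$ (Lemmas~\ref{L3} and~\ref{L7}), dominated convergence gives $\iint\psi F(t)\to\iint\psi\bar F$ for every $\psi\in C_b$, and comparison with Lemma~\ref{L9} forces $\bar F=\finf$. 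For $t\in[0,1]$ the asserted estimate is trivial.

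Two localizations reduce the problem to one pointwise inequality. First, under \eqref{Condition}, Lemma~\ref{LAEst} (together with the small-time lower bound of Lemma~\ref{L1}) gives $\mcR(s,t,r,w,\ell)\ge c(1+s)$ on $\bS(t)$, so on the $r$-support of $f(t,\cdot,w,\ell)$ one has $r\ge c(1+t)$; hence $|\beta(t,r)|\le C(1+t)^{-2}$ and, using $\partial_r\beta=-3\ell r^{-4}+4\pi\rho-2mr^{-3}$ with $\|\rho(t)\|_\infty\le C(1+t)^{-3}$ from Lemma~\ref{Lrho}, $|\partial_r\beta(t,r)|\le C(1+t)^{-3}$. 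Second, Lemma~\ref{L8} and the estimate~\eqref{Winfest} of Lemma~\ref{L6} give, for $(r,w,\ell)\in\bS(t)$ with preimage $(\tilde r,\tilde w,\ell)\in\bS(0)$, $r=\tilde r+\mcW_\infty(0,\tilde r,\tilde w,\ell)t+\mathcal{O}(\ln(1+t))$ with $|\mcW_\infty(0,\tilde r,\tilde w,\ell)-w|\le C(1+t)^{-1}$; since $\tilde r\le\mfR(0)$, this yields $|r-wt|\le C(1+\ln(1+t))$ (and $|r-\tfrac{w}{\sqrt{1+w^2}}t|\le C(1+\ln(1+t))$ for \eqref{RVP}), so the $r$-support of $f(t,\cdot,w,\ell)$ has measure $\le C(1+\ln(1+t))$.

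The heart of the argument is that $\partial_w f$ measured along the free-transport line grows only logarithmically. Set $G(t,y,w,\ell):=f(t,y+wt,w,\ell)$, so that $\partial_w f(t,r,w,\ell)=(\partial_w G)(t,r-wt,w,\ell)-t\,\partial_r f(t,r,w,\ell)$; substituting and integrating the $-t\,\partial_r f$ term by parts in $r$,
\[
\partial_t F(t,w,\ell)=-\int_{\bfR}\beta(t,y+wt)\,(\partial_w G)(t,y,w,\ell)\,dy\;-\;t\int_0^\infty\partial_r\beta(t,r)\,f(t,r,w,\ell)\,dr.
\]
By the localizations the second term is $\le t\cdot C(1+t)^{-3}\cdot\|f_0\|_\infty\cdot C(1+\ln(1+t))\le C(1+t)^{-2}(1+\ln(1+t))$. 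For $\partial_w G$, write the backward characteristic $(\mcR(s),\mcW(s))$ through a point of $\bS(t)$ and put $a(s)=\partial_r f(s,\mcR(s),\mcW(s),\ell)$, $b(s)=\partial_w f(s,\mcR(s),\mcW(s),\ell)$; the identities used in the proof of Lemma~\ref{Df} read $\dot a=-P(s,\mcR(s))\,b$ and $\dot b=-a$ for \eqref{VP}, with $P=4\pi\rho-2mr^{-3}-3\ell r^{-4}$, so $\partial_w G(t,y,w,\ell)=t\,a(t)+b(t)$ satisfies $\frac{d}{ds}(s\,a(s)+b(s))=-s\,P(s,\mcR(s))\,b(s)$. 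Since $\mcR(s)\ge c(1+s)$, $\|\rho(s)\|_\infty\le C(1+s)^{-3}$ and $\|\partial_w f(s)\|_\infty\le C(1+s)$, we get $|s\,P(s,\mcR(s))\,b(s)|\le C(1+s)^{-1}$, and with $|a(1)|,|b(1)|\le C$ (Lemma~\ref{Df}),
\[
|\partial_w G(t,y,w,\ell)|\le C+\int_1^t C(1+s)^{-1}\,ds\le C(1+\ln(1+t)).
\]
Combining $|\beta(t,y+wt)|\le C(1+t)^{-2}$, the $y$-support of $G(t,\cdot,w,\ell)$ having measure $\le C(1+\ln(1+t))$, and this bound on $\partial_w G$, the first term is $\le C(1+t)^{-2}(1+\ln(1+t))^2$, and the required estimate for $\partial_t F$ follows.

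The main obstacle is precisely this logarithmic (rather than linear) bound on $\partial_w G$ --- the cancellation of the leading $\mathcal{O}(t)$ growth between $t\,\partial_r f$ and the drift accumulated in $\partial_w f$ --- which is why the shift $r\mapsto r+wt$ (resp.\ $r\mapsto r+\tfrac{w}{\sqrt{1+w^2}}t$) is the correct change of variable, and which crucially uses the sharp density decay $\|\rho(t)\|_\infty\le C(1+t)^{-3}$: a slower rate would not close the $\int_1^t(1+s)^{-1}$ estimate, and one factor of $\ln(1+t)$ comes from here while the second comes from the $r$-localization $|r-wt|\lesssim\ln(1+t)$ of the support (Lemmas~\ref{L6}--\ref{L8}). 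The relativistic case runs in parallel: $\partial_w G$ acquires the bounded factor $\partial_w(w/\sqrt{1+w^2})=(1+w^2)^{-3/2}$ while $\dot b$ acquires $\partial_w(w/\sqrt{1+w^2+\ell r^{-2}})$, and along the characteristic these agree up to an $\mathcal{O}(\ell\mcR(s)^{-2})=\mathcal{O}(s^{-2})$ error, so the same cancellation persists; the extra term in the $\partial_t F$ identity involving $\partial_r(w/\sqrt{1+w^2+\ell r^{-2}})$ is $\mathcal{O}((1+t)^{-3})$ on the support and hence harmless.
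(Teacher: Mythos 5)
Your proposal is correct and follows essentially the same route as the paper: the translated distribution along the free-transport line, the key cancellation $\mfV\left(t\,\partial_r f+\partial_w f\right)=-t\,\partial_r\beta\,\partial_w f$ yielding the logarithmic bound, the $\mathcal{O}(\ln t)$ diameter of the $r$-support, the sharp density decay, and identification of the limit with $F_\infty$ via uniqueness of weak limits. The only (harmless) deviations are that you obtain the support localization $|r-wt|\lesssim \ln(1+t)$ from Lemmas~\ref{L6}--\ref{L8} rather than by comparing two backward characteristics, and you bound $F(t,w,\ell)$ directly by $\|f_0\|_\infty$ times the support measure, which lets you skip the Gronwall step the paper uses to control $\|F(t)\|_\infty$.
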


\begin{proof}
We first estimate the size of the $r$-support of $f$ over time. 
%
Begin by taking $t \geq 1$ and letting the points $(r_1,w,\ell), (r_2,w,\ell) \in S(t)$ be given. Integrating the characteristic equations \eqref{charang} and using Lemma \ref{L2} with the boundedness of the spatial support at $t=1$, we find
\begin{eqnarray*}
C & \geq & \left |\mcR(1,t,r_1, w, \ell) - \mcR(1,t,r_2,w,\ell) \right | \\
& \geq & \left | r_1 - r_2 \right | - 2\int_1^t \int_s^t \Vert E(\tau) \Vert_\infty \ d\tau ds\\
& \geq & \left | r_1 - r_2 \right | - C \ln(1+t)
\end{eqnarray*}
for $t \geq 1$.
Rearranging the inequality produces
$$|r_1 - r_2| \leq C(1 + \ln(1+t))$$
for $t \geq 1$.
Thus, for any $t \geq 1$ and $(w,\ell) \in \bfR \times [0,\infty)$, the diameter of the $r$-support  grows like $\mathcal{O}(\ln(t))$, and there are $r_0 \in [0,\infty)$ and $C  > 0$ such that
$$ \{ r \geq 0 : f(t,r,w,\ell) \neq 0\} \subseteq \left \{ r \geq 0 : | r - r_0| \leq C(1+\ln(1+t)) \right \}.$$
Combining this with a constant bound on the support for $t \in [0,1]$, we find
\begin{equation}
\label{Rsupp}
\sup_{(w,\ell) \in \bfR \times [0,\infty)} \left | \{ r \geq 0 : f(t,r,w,\ell) \neq 0\}  \right | \leq C(1+\ln(1+t))
\end{equation}
for $t \geq 0$.

Next, we define for every $t \geq 0$ and $(r,w,\ell) \in [0,\infty) \times \bfR \times [0,\infty)$ the translated distribution function
$$g(t,r, w, \ell) = f(t,r+wt, w, \ell),$$
and note that $g$ satisfies the equation
\begin{equation}
\label{PDEg}
\partial_t g = \left ( \frac{m(t,r+wt)}{(r+wt)^2} + \ell (r+wt)^{-3}\right) (t \partial_r - \partial_w) g
\end{equation}
as $f$ satisfies the Vlasov equation.
Additionally, for $t$ sufficiently large we have $w \geq 0 $ on the support of $f$ and thus 
$$\int_0^\infty g(t,r, w, \ell) \ dr = \int_0^\infty f(t, r+wt, w, \ell) \ dr = \int_0^\infty f(t, r, w, \ell) \ dr = F(t,w, \ell).$$

Now, we estimate
$\partial_w \left [ g(t,r,w,\ell) \right ] = \left ( t \partial_r f + \partial_w f \right )(t,r+wt, w, \ell).$ Applying the Vlasov operator $\mfV$ to the non-translated version of this quantity yields
\begin{eqnarray*}
\mfV \left ( t\partial_r f(t, r, w, \ell) + \partial_w f(t, r, w, \ell)  \right )
& = & - t\partial_{r} \left ( \frac{m(t,r)}{r^2} + \ell r^{-3} \right ) \partial_w f(t, r, w, \ell)\\
& = & -t\left (\rho(t,r) -2\frac{m(t,r)}{r^3} - 3\ell r^{-4} \right) \partial_w f(t, r, w, \ell),
\end{eqnarray*}
and thus inverting via characteristics gives
\begin{eqnarray*}
t\partial_r f(t, r, w, \ell) + \partial_w f(t, r, w, \ell) & = & \partial_w f_0(\mcR(0),\mcW(0), \ell)\\
& \ &  - \int_0^t \left (s \left ( \rho -2m r^{-3} - 3\ell r^{-4} \right )\partial_w f \right ) \biggr |_{(s,\mcR(s), \mcW(s), \ell)} ds.
\end{eqnarray*}
From the estimates of Lemmas \ref{Lrho}, \ref{LAEst}, and \ref{Df}, this implies
\begin{eqnarray*}
\left | t\partial_r f(t, r, w, \ell) + \partial_w f(t, r, w, \ell) \right | 
& \leq & \|\partial_w f_0\|_\infty + C\int_0^t (1+s)^{-2}  \|\partial_w f(s)\|_\infty \ ds \\
& \leq & C + C\int_0^t (1+s)^{-1} \ ds\\
& \leq & C(1 + \ln(1+t))
\end{eqnarray*}
for $(r,w,\ell) \in S(t)$.
Using this estimate, we find
\begin{eqnarray*}
\int_0^\infty  \left | \partial_w g(t,r,w, \ell) \right | \ dr & = & \int_0^\infty \left | t \partial_r f(t,r+wt, w, \ell) + \partial_w f(t,r+wt, w, \ell) \right | \ dr \\
& = & \int_0^\infty  \left | t \partial_r f(t,r, w, \ell) + \partial_w f(t,r, w, \ell) \right | \ dr \\
& \leq & C \sup_{(w,\ell) \in \bfR \times [0,\infty)} \left | \{ r \geq 0 : f(t,r,w,\ell) \neq 0\}  \right | (1 + \ln(1+t)),
\end{eqnarray*}
and thus by \eqref{Rsupp}
\begin{equation}
\label{Dvg}
\int_0^\infty \left | \partial_w g(t,r,w,\ell) \right | \ dr  \leq C(1 + \ln(1+t))^2
\end{equation}
for all $t \geq 0$, $(w, \ell) \in \bfR \times [0,\infty)$.
%
Furthermore, note that for $t$ sufficiently large, Lemma \ref{LAEst} implies
\begin{equation}
\label{momest}
\int_0^\infty (r+wt)^{-p} g(t,r,w,\ell) \ dr
= \int_0^\infty r^{-p} f(t,r,w,\ell) \chfn_{ \{f(t,r,w,\ell) \neq 0\}} \ dr
\leq Ct^{-p} F(t, w, \ell)
\end{equation}
for any $p \geq 0$.

Next, we will focus on showing that the spatial integral of $g$ has a uniform limit as $t \to \infty$.
In particular, upon integrating \eqref{PDEg} in $r$ and integrating by parts, we find
\begin{eqnarray*}
\partial_t \int_0^\infty g(t,r,w,\ell) \ dr & = & \int_0^\infty \left ( \frac{m(t, r+wt)}{(r+wt)^2} + \ell (r+wt)^{-3} \right ) (t \partial_r - \partial_w) g(t,r,w,\ell) \ dr\\
& = & -t  \int_0^\infty \left (\rho(t, r+wt) - 2\frac{m(t,r+wt)}{(r+wt)^3} -3\ell (r+wt)^{-4} \right ) g(t,r,w, \ell) \ dr\\
& \ & \quad  -  \int_0^\infty \left ( \frac{m(t, r+wt)}{(r+wt)^2} + \ell (r+wt)^{-3} \right ) \partial_w g(t,r,w,\ell) \ dr.
\end{eqnarray*}
Thus, because $F(t,w, \ell) = \int_0^\infty g(t,r,w,\ell) \ dr$ for $t$ sufficiently large, we use \eqref{Dvg} and \eqref{momest} with the field decay of Lemma \ref{L2} to find
\begin{equation}
\label{dtf}
\left | \partial_t F(t,w,\ell) \right | \leq C(1+t)^{-2} F(t,w,\ell) + C(1+t)^{-2}(1+ \ln(1+t))^2.
\end{equation}
Because the latter term in this inequality is integrable in time, we find
$$F(t,w,\ell) \leq F(T_3,w,\ell) + \int_{T_3}^t \left | \partial_t F(s,w, \ell) \right | \ ds \leq C + C\int_{T_3}^t (1+s)^{-2} F(s, w, \ell) \ ds$$
where $t \geq T_3$ guarantees $w \geq 0$ on the support of $f$.
After taking the supremum and invoking Gronwall's inequality, this yields
$$\| F(t) \|_\infty \leq C \exp \left (C\int_{T_3}^t (1+s)^{-2} \ ds \right ) \leq C$$
for $t \geq T_3$.
Returning to \eqref{dtf}, we use the bound on $\|F(t)\|_\infty$ to find
$$\left | \partial_t F(t,w,\ell) \right |  \leq C(1+t)^{-2}(1+ \ln(1+t))^2.$$
Of course, this implies that $ \| \partial_t F(t)\|_\infty$ is integrable in time, which guarantees that $F(t)$ is uniformly Cauchy and possesses a limit.
As $F(t,w,\ell)$ is continuous and the limit is uniform, there is $\mfF \in C(\bfR \times [0,\infty))$ such that
$$ \| F(t) - \mfF \|_{\infty} \leq C(1+t)^{-1}(1+ \ln(1+t))^2$$
for $t \geq 0$.
Finally, because $F \to \mfF$ strongly in the $L^\infty$ norm, it converges to this same limit in the weak-$\star$ topology, as well. Then, due to the uniqueness of weak limits, we conclude $\mfF = F_\infty$. 

Turning to solutions of \eqref{RVP}, we merely make a few alterations to the proof. First, a brief calculation shows
\begin{equation}
\label{ddotR}
\ddot{\mcR}(s) = A(s)^{-3} \left ( (1 + \ell \mcR(s)^{-2}) \dot{\mcW}(s) + \ell\mcR(s)^{-3} \mcW(s) A(s)^{-1} \right )
\end{equation}
where
$$A(s) = \sqrt{1 + \mcW(s)^2 + \ell \mcR(s)^{-2}} \geq 1.$$
Using Lemmas \ref{L2}, \ref{L3}, and \ref{LAEst}, we find
$$|\dot{\mcW}(s) | \leq C(1 + s)^{-2}, \qquad
\mcR(s)^{-2} \leq Cs^{-2}, \qquad
\mathrm{and} \qquad
|\mcW(s) | \leq \mfW(s) \leq C.$$
Inserting these estimates into \eqref{ddotR} then provides the bound
$$\left | \ddot{\mcR}(s) \right | \leq Cs^{-2}$$
for $s \geq 1$.

Now, consider $t \geq 1$ and let $(r_1, w, \ell), (r_2, w, \ell) \in S(t)$ be given. Integrating the characteristic equations \eqref{charangrel} yields
$$\mcR(1, t, r_k, w, \ell) = r - \frac{w}{\sqrt{1+w^2 + \ell {r_k}^{-2}}} (t-1) + \int_1^t \int_s^t \ddot{\mcR}(\tau)  \ d\tau ds$$
for $t \geq 1$ and $k=1,2$.
We proceed exactly as for solutions of \eqref{VP} by subtracting these values, with the exception that the middle term in the above expression now depends upon $r_k$ and must be estimated.
Denoting $\tilde{r}_k = \mcR(1,t,r_k,w,\ell)$ and $\tilde{w}_k = \mcW(1, t, r_k, w, \ell)$ for $k=1,2$ and letting $\tilde{\ell} = \ell$, Lemma \ref{LAEst} implies
$$r_1^{-2} + r_2^{-2} = \mcR(t, 1, \tilde{r}_1, \tilde{w}_1, \tilde{\ell})^{-2} + \mcR(t, 1, \tilde{r}_2, \tilde{w}_2, \tilde{\ell})^{-2} \leq Ct^{-2}.$$
Using this bound and the velocity bound from Lemma \ref{L3}, we estimate the difference of the resulting terms by
$$\left | \frac{w}{\sqrt{1 +w^2 + \ell r_1^{-2}}} - \frac{w}{\sqrt{1 +w^2 + \ell r_2^{-2}}} \right | = \frac{\ell |w| \left | r_2^{-2} - r_1^{-2} \right |}{
A_1 A_2 (A_1 + A_2)} \leq C \left (r_1^{-2} + r_2^{-2} \right ) \leq Ct^{-2} $$
where $$A_k(r_k, w, \ell) = \sqrt{1 +w^2 + \ell r_k^{-2}} \geq 1.$$
Hence, the upper bound
$$\left |r_1 - r_2 \right | \leq C\left (1+ t^{-1} +\ln(1+t) \right ) \leq C\left (1+\ln(1+t) \right )$$
for $t \geq 1$ results as for \eqref{VP}.
As before, this implies \eqref{Rsupp}, and the support grows like $\mathcal{O}(\ln(t))$.

Next, we denote the relativistic velocity by
$$\hat{w} = \frac{w}{\sqrt{1 + w^2+ \ell r^{-2}}},$$
and note that
$$ \left | \hat{w} \right | \leq 1, \qquad \left |\frac{\partial \hat{w}}{\partial w} \right | \leq 1, \qquad \mathrm{and} \qquad  \left |\frac{\partial^2 \hat{w}}{\partial w^2} \right | \leq 1.$$
Then, we define for every $t \geq 0$ and $(r,w,\ell) \in [0,\infty) \times \bfR \times [0,\infty)$ the translated distribution function
$$h(t,r, w, \ell) = f(t,r+\hat{w}t, w, \ell)$$
and proceed as for \eqref{VP}.
Note that $h$ satisfies
\begin{equation}
\label{PDEgrel}
\partial_t h = \left ( \frac{m(t,r+\hat{w}t)}{(r+\hat{w}t)^2} + \ell (r+\hat{w}t)^{-3}\right) \left (t \frac{\partial \hat{w}}{\partial w} \partial_r - \partial_w \right) h
\end{equation}
because $f$ satisfies the Vlasov equation.
As before, for $t$ sufficiently large we have $w \geq 0 $ on the support of $f$ and thus
$$\int_0^\infty h(t,r, w, \ell) \ dr = F(t,w, \ell).$$
Furthermore, the $w$-derivative of the translated distribution function now satisfies
$$\partial_w \left [ h(t,r,w,\ell) \right ] = \left ( t \frac{\partial \hat{w}}{\partial w} \partial_r f + \partial_w f \right )(t,r+\hat{w}t, w, \ell).$$

With this, the proof proceeds as for \eqref{VP} with minor exceptions adjusting for the appearance of derivatives of the relativistic velocity. In particular, we must use
$$\left | \partial_r \left ( \frac{\partial \hat{w}}{\partial w} \right ) \right | = \ell r^{-3} \left |\partial_w \left ( \frac{\hat{w}}{1 + w^2 + \ell r^{-2}} \right ) \right | \leq Ct^{-3}$$
for $t >0$ and $(r,w,\ell) \in S(t)$ due to Lemma \ref{LAEst}.
This further implies
$$\left | \mfV \left ( \frac{\partial \hat{w}}{\partial w} \right) \right | = \left | \hat{w} \partial_r \left ( \frac{\partial \hat{w}}{\partial w} \right ) + \left ( \frac{m(t,r)}{r^2} + \ell r^{-3} \right ) \partial_w \left ( \frac{\partial \hat{w}}{\partial w} \right ) \right |\\
\leq Ct^{-2}$$
for $(r,w,\ell) \in S(t)$, so that by Lemmas \ref{Lrho}, \ref{LAEst}, and \ref{Df}, we find
\begin{eqnarray*}
\left | \mfV \left ( t \frac{\partial \hat{w}}{\partial w}\partial_r f(t, r, w, \ell) + \partial_w f(t, r, w, \ell)  \right ) \right |
& = & \left | t \mfV\left ( \frac{\partial \hat{w}}{\partial w} \right ) \partial_r f - t\partial_{r} \left ( \frac{m(t,r)}{r^2} + \ell r^{-3} \right ) \partial_w f \right |\\
& \leq & Ct\left | \mfV \left ( \frac{\partial \hat{w}}{\partial w} \right) \right | + Ct^{-1}\\
& \leq & Ct^{-1}
\end{eqnarray*}
for $t > 0$.
With these alterations, we follow the previous argument and again conclude that $\|F(t) \|_\infty$ is bounded and
$ \| \partial_t F(t)\|_\infty$ is integrable; hence, the result follows for solutions of \eqref{RVP}.
\end{proof}


\section{Proof of Theorem \ref{T1}}
In the final section, we collect a number of estimates from previous lemmas to prove the first of the main results.

\begin{proof}[Proof of Theorem \ref{T1}]

We begin by combining the results of Lemmas \ref{PotDecay} and \ref{L3} to find
$$C (1+t)^{-2p + 3} \leq \int |E(t,x) |^p \ dx \leq C(1+t)^{-2p +3}$$
and
thus
\begin{equation}
\label{Ep}
C(1+t)^{-2 + \frac{3}{p}} \leq \|E(t)\|_p \leq C(1+t)^{-2 +\frac{3}{p}}
\end{equation}
for any $p \in \left ( \frac{3}{2}, \infty \right)$.
Upper and lower bounds for the endpoint case $p =\infty$ are addressed in Lemmas \ref{L2} and \ref{Ebelow} with the bound on $\mfR(t)$ arising from Lemma \ref{L3}, and the matching rate is obtained so that \eqref{Ep} holds for all $p \in \left (\frac{3}{2}, \infty \right].$
Thus, the optimal field decay rate in any suitable $L^p$ norm has been established.
 
Next, we prove the stated decay of the density.
Because the total mass is conserved, we have
$$\| \rho(t) \|_1 = \mcM,$$
and using Lemma \ref{Lrho} and the upper bound on $\mfR(t)$ from Lemma \ref{L3}, we find
\begin{equation}
\label{rhoinfty}
C_1 (1+t)^{-3} \leq \| \rho(t) \|_\infty \leq C_2(1+t)^{-3}
\end{equation}
for some $C_1, C_2 > 0$. Therefore, the upper bound
\begin{equation}
\label{rhoq}
\| \rho(t) \|_q \leq C (1+t)^{-3+ \frac{3}{q}}
\end{equation}
for $q \in [1,\infty]$ merely follows by $L^q$-interpolation between $L^1$ and $L^\infty$. Obtaining the optimal lower bound requires an additional estimate, namely Young's convolution inequality for the electric field (cf. \cite{Glassey}), which yields
$$\| E(t) \|_2 \leq C\Vert \rho(t) \Vert_{\frac{6}{5}}.$$
Using the previously established lower bound for $\| E(t) \|_2$ obtained by choosing $p = 2$ in \eqref{Ep}, this now becomes
\begin{equation}
\label{rho65}
C (1+t)^{-\frac{1}{2}} \leq \Vert \rho(t) \Vert_{\frac{6}{5}}.
\end{equation}
This lower bound is optimal, as we have
$$\Vert \rho(t) \Vert_{\frac{6}{5}} \leq C (1+t)^{-\frac{1}{2}}$$
by selecting $q = \frac{6}{5}$ in \eqref{rhoq} above.
With this, the optimal lower bound on $\|\rho(t)\|_q$ for any $q \in (1,\infty)$ can be obtained.
In particular, if  $q \in \left (1, \frac{6}{5} \right )$ we interpolate the $L^{\frac{6}{5}}$ norm between $L^q$ and $L^\infty$ and use \eqref{rhoinfty} and \eqref{rho65} so that
$$ C(1+t)^{-\frac{1}{2}} \leq \|\rho(t)\|_{\frac{6}{5}} \leq C  \|\rho(t)\|_q^{\frac{5}{6}q}  \|\rho(t)\|_\infty^{1 - \frac{5}{6}q}  \leq C  \|\rho(t)\|_q^{\frac{5}{6}q}  (1+t)^{\frac{5q-6}{2}}.$$
Rearranging the inequality yields
$$C(1+t)^{-3+\frac{3}{q}} \leq \| \rho(t)\|_q$$
as desired.
Similarly, if  $q \in \left (\frac{6}{5}, \infty \right )$ we interpolate the $L^{\frac{6}{5}}$ norm between $L^1$ and $L^q$ and use mass conservation along with \eqref{rho65} so that
$$ C(1+t)^{-\frac{1}{2}} \leq \|\rho(t)\|_{\frac{6}{5}} \leq C  \|\rho(t)\|_q^{\frac{q}{6q-6}}  \|\rho(t)\|_1^{\frac{5q-6}{6q-6}} = C  \|\rho(t)\|_q^{\frac{q}{6q-6}}.$$
Rearranging the inequality again yields the desired lower bound
$$C(1+t)^{-3+\frac{3}{q}} \leq \| \rho(t)\|_q.$$
Combining these estimates with \eqref{rhoq} for $q \in (1,\infty)$, as well as mass conservation and \eqref{rhoinfty} for the boundary cases $q=1$ and $q=\infty$, yields the optimal decay rate for all $L^q$ norms of the charge density for $q \in [1,\infty]$. 
%
%
Finally, the optimal growth rates of the maximal support functions follow immediately from Lemma \ref{L3}.
\end{proof}


\begin{thebibliography}{}
%
%

\bibitem{BD} Bardos, C. and Degond, P., Global existence for the {V}lasov-{P}oisson equation in {$3$} space variables with small initial data. Ann. Inst. H. Poincar\'e Anal. Non Lin\'eaire {\bf 1985}, 2(2): 101-118.

\bibitem{BKR} Batt, J., Kunze, M., and Rein, G., On the asymptotic behavior of a one-dimensional, monocharged plasma and a rescaling method. Advances in Differential Equations {\bf 1998}, 3: 271-292.

\bibitem{BCP1} Ben-Artzi, J., Calogero, S., and Pankavich, S., Arbitrarily large solutions of the Vlasov-Poisson system. SIAM J. Math. Anal. {\bf 2018}, 50(4): 4311-4326.

\bibitem{BCP2} Ben-Artzi, J., Calogero, S., and Pankavich, S., Concentrating solutions of the relativistic Vlasov-Maxwell system. Commun. Math. Sci. {\bf 2019}, 17(2): 377-392.


\bibitem{Degond} Degond, P. and Neunzert, H., Local existence of solutions of the Vlasov-Maxwell equations and approximation by the solutions of the Vlasov-Poison equations, Math. Meth Appl. Sci. {\bf 1986}, 8:533-558.

\bibitem{Glassey} Glassey, R. The Cauchy Problem in Kinetic Theory.  SIAM: {\bf 1996}.

\bibitem{GPS}  Glassey, R., Pankavich, S., and Schaeffer, J., Decay in Time for a One-Dimensional, Two Component Plasma. Math. Meth Appl. Sci. {\bf 2008}, 31:2115-2132.

\bibitem{GPS2} Glassey, R., Pankavich, S., and Schaeffer, J., On long-time behavior of monocharged and neutral plasma in one and one-half dimensions. Kinetic and Related Models {\bf 2009}, 2: 465-488.


\bibitem{GPS4} Glassey, R., Pankavich, S., and Schaeffer, J., Time Decay for Solutions to the One-dimensional Equations of Plasma Dynamics, Quarterly of Applied Mathematics {\bf 2010}, 68: 135-141.

\bibitem{GPS5} Glassey, R., Pankavich, S., and Schaeffer, J., Large Time Behavior of the Relativistic Vlasov-Maxwell System in Low Space Dimension, Differential \& Integral Equations {\bf 2010} 23: 61-77.

\bibitem{GS} Glassey, R. and Schaeffer, J., On symmetric solutions of the relativistic Vlasov-Poisson system. Comm. Math. Phys. {\bf 1985}, 101(4): 459--473.

\bibitem{GStrauss} Glassey, R. and Strauss, W., Remarks on collisionless plasmas, in Contemporary Mathematics {\bf 1984,} Vol. 28, 269-279.

\bibitem{Horst} Horst, E., Symmetric plasmas and their decay. Comm. Math. Phys. {\bf 1990}, 126: 613-633.

\bibitem{Horst1} Horst, E., On the classical solutions of the initial value problem for the unmodified nonlinear Vlasov equation, I: General theory. Math. Methods Appl. Sci. {\bf 1981}, 3(2): 229-248.

\bibitem{Horst2} Horst, E., On the classical solutions of the initial value problem for the unmodified nonlinear Vlasov equation, II: Special cases. Math. Methods Appl. Sci. {\bf 1982}, 4(1): 19-32.

\bibitem{IR} Illner, R. and Rein, G., Time decay of the solutions of the Vlasov-Poisson system in the plasma physical case. Math. Methods Appl. Sci. {\bf 1996}, 19: 1409-1413.

\bibitem{Ionescu} Ionescu, A., Pausader, B., Wang, X., Widmayer, K., On the asymptotic behavior of solutions to the Vlasov-Poisson system. arXiv:2005.03617.

\bibitem{LP} Lions, P.L. and Perthame, B. Propogation of moments and regularity for the three dimensional Vlasov-Poisson system. Invent. Math. {\bf 1991}, 105: 415-430.

\bibitem{Perthame} Perthame, B., Time decay, propagation of low moments and dispersive effects for kinetic equations, Comm. PDE {\bf 1996} 21(1/2): 659-686.

\bibitem{Pfaf} Pfaffelmoser, K., Global classical solution of the Vlasov-Poisson system in three dimensions for general initial data. J. Diff. Eq.  {\bf 1992}, 95(2): 281-303.

\bibitem{ReinSIMA} Rein, G., The Asymptotic Behavior of Solutions to the Repulsive $n$-Body Problem,  SIAM J. Math. Anal., {\bf 2018,} 50(1): 1-4.

\bibitem{Rein} Rein G. Collisionless Kinetic Equations from Astrophysics - The Vlasov-Poisson System, in Handbook of Differential Equations, Evolutionary
Equations, {\bf 2007}, Eds. C. M. Dafermos and E. Feireisl, Elsevier: 383-479.


\bibitem{Jacklimit} Schaeffer, J., The classical limit of the relativistic Vlasov-Maxwell system, Comm. Math. Phys. {\bf 1986}, 104(3): 403-421.

\bibitem{Schaeffer} Schaeffer, J., Global existence of smooth solutions to the Vlasov-Poisson system in three dimensions, Comm. PDE {\bf 1991}, 16(8-9): 1313-1335.

\bibitem{Jacknew} Schaeffer, J., An Improved Small Data Theorem for the Vlasov-Poisson System, preprint.

\bibitem{Sch} Schaeffer, J., Large-time behavior of a one-dimensional monocharged plasma. Diff. and Int. Equations {\bf 2007}, 20(3): 277-292.

\bibitem{Yang} Yang, D., Growth estimates and uniform decay for the Vlasov-Poisson system, Math. Methods Appl. Sci. {\bf 2017}, 40: 4906-4916.


\end{thebibliography}
\end{document}